\newcommand{\R}{\mathbb{R}}
\newcommand{\x}{\textbf{x}}
\newtheorem*{theorem-non}{Theorem}
\renewenvironment{proof}{{\bfseries Proof.}}{\hspace*{\fill}$\blacksquare$\vspace{5pt}}
\begin{document}

\title{On Combinatorial Network Flows Algorithms and Circuit Augmentation for Pseudoflows\thanks{This work was supported by Air Force Office of Scientific Research grant FA9550-21-1-0233 and NSF grant 2006183, Algorithmic Foundations, Division of Computing and Communication Foundations.}}

\author{Steffen Borgwardt\inst{1}\orcidID{0000-0002-8069-5046} \and Angela Morrison\inst{2}\orcidID{0000-0001-8288-5030}}

\institute{\email{\href{mailto:steffen.borgwardt@ucdenver.edu}{steffen.borgwardt@ucdenver.edu}}; 
University of Colorado Denver
\and \email{\href{mailto:angela.morrison@ucdenver.edu}{angela.morrison@ucdenver.edu}}; University of Colorado Denver
}

\date{}

\maketitle

\begin{abstract}
There is a wealth of combinatorial algorithms for classical min-cost flow problems and their simpler variants like max flow or shortest path problems. It is well-known that many of these algorithms are related to the Simplex method and the more general circuit augmentation schemes: prime examples are the network Simplex method, a refinement of the primal Simplex method, and min-mean cycle canceling, which corresponds to a steepest-descent circuit augmentation scheme. 

We are interested in a deeper understanding of the relationship between circuit augmentation and combinatorial network flows algorithms. To this end, we generalize from the consideration of primal or dual flows to so-called pseudoflows, which adhere to arc capacities but allow for a violation of flow balance. We introduce `pseudoflow polyhedra,' in which slack variables are used to quantify this violation, and characterize their circuits. This enables the study of combinatorial network flows algorithms in view of the walks that they trace in these polyhedra, and in view of the pivot rules for the steps. In doing so, we provide an `umbrella,' a general framework, that captures several algorithms. 

We show that the Successive Shortest Path Algorithm for min-cost flow problems, the Shortest Augmenting Path Algorithm for max flow problems, and the Preflow-Push algorithm for max flow problems lead to (non-edge) circuit walks in these polyhedra. The former two are replicated by circuit augmentation schemes for simple pivot rules. Further, we show that the Hungarian Method leads to an edge walk and is replicated, equivalently, as a circuit augmentation scheme or a primal Simplex run for a simple pivot rule. 

\keywords{circuits \and circuit augmentation \and network flows \and pseudoflows \and polyhedral theory \and linear programming.}\\
{\bf MSC: } 52B05 \and 90C05 \and 90C08 \and 90C90
\end{abstract}

\section{Introduction}
The studies of the connection between combinatorial algorithms and linear programming over the underlying polyhedra are a classical field in optimization theory. We are interested in a linear-programming interpretation of combinatorial algorithms for min-cost flow problems, which encompass classical transportation problems, max flow, or shortest path problems. Such connections lead to a possible transfer of knowledge, simpler analysis or implementations, or valuable information for the design of new algorithms. 

Formally, a general min-cost flow problem on a network $G=(N,A)$ with $n$ nodes and $m$ arcs can be represented as a linear program (LP)
\begin{equation*}
\begin{array}{rcrcrcr}
    \text{min }  && &&\bm{c}^T\bm{x} & &   \\
    \text{s. t. } && &&A_G\bm{x} &=& \bm{b} \;\\
    && && \bm{\ell} \leq \bm{x} &\leq & \bm{u}, \tag{MCF}
\end{array}\label{MCF}
\end{equation*}
where $\bm{c}\in \mathbb{R}^m$ is the cost of each arc in the network, $A_G \in \mathbb{R}^{n\times m}$ is the node-arc incidence matrix, $\bm{b}\in \mathbb{R}^n$ is the supply/demand value for each node, and $\bm{\ell}\in \mathbb{R}^m$ and $\bm{u}\in \mathbb{R}^m$ are the lower and upper capacities of each arc, respectively. The equality constraints are referred to as {\em flow balance constraints}, guaranteeing that the specified balance between in-flow and out-flow is satisfied for each node. We use the tag (\ref{MCF}) to refer to the polyhedron underlying the LP.

Many combinatorial network flows algorithms allow an interpretation in terms of primal and primal-dual Simplex methods over (\ref{MCF}). Well-known examples include the network Simplex method, a tailored variant of the primal Simplex method \cite{d-51,o-97}, the Hungarian method for assignment problems \cite{h-55,m-57,m-06}, and the more general primal-dual algorithm, literally named for its use of primal and dual Simplex information \cite{amo-93,ff-57,gw-96}.

A generalization from primal Simplex methods to the more general class of {\em circuit augmentation schemes} leads to another prototypical example: a cycle-canceling scheme corresponds to a {\em circuit walk} over (\ref{MCF}). In particular, min-mean cycle canceling is an example of a steepest-descent circuit augmentation scheme \cite{gdl-14}. A standard reformulation of a max flow problem as a circulation problem allows an interpretation of the Edmonds-Karp-Dinic algorithm as a cycle canceling scheme, too.

In this work, we provide interpretations for three combinatorial network flows algorithms as circuit augmentation schemes for so-called {\em pseudoflows}, which allow for a violation of flow balance constraints, but must adhere to the capacity constraints. Notably, these interpretations will be purely primal; no dual information is used. In Section \ref{sec:def_pseudo_poly}, we formulate a {\em pseudoflow polyhedron} (\ref{pseudo_poly_form}) and characterize its set of circuits. The formulation of (\ref{pseudo_poly_form}) is based on tracking the violation of flow balance constraints through a set of slack variables; (\ref{MCF}) appears as a face of the polyhedron, and a zero pseudoflow forms a vertex. This enables the connection of combinatorial algorithms that gradually build an optimal flow from a zero start to circuit augmentation schemes over (\ref{pseudo_poly_form}), in Section \ref{sec:alg_of_int}.

\subsection{Preliminaries}\label{sec:preliminaries}

Before we describe our contributions, we provide some background from the theory of circuits and circuit augmentation. We follow \cite{bfh-14,bv-17,dhl-15,env-22,r-69}. Recall  that a set of \textit{circuits} for a polyhedron can be defined as follows. 

\begin{definition}[Circuits]\label{def:circuits}
For a polyhedron $P = \{\bm{x}\in \mathbb{R}^n: A\bm{x} = \bm{b}, B\bm{x}\leq \bm{d}\}$, the set of circuits of $P$, denoted $\mathcal{C}(A,B)$, consists of those $\bm{g} \in ker(A)\setminus \{0\}$, normalized to coprime integer components, for which $B\bm{g}$ is support minimal over the set of $\{B\bm{x}:\bm{x}\in ker(A) \setminus \{0\}\}$.\\

\end{definition}
The set of circuits has been shown to consist of all potential edge directions of the polyhedron as the right hand sides $\bm{b}$ and $\bm{d}$ vary \cite{g-75}. Therefore, it contains the set of all actual edge directions.

Using the set of circuits as the possible directions for steps of a walk over a polyhedron leads to the concept of a \textit{circuit walk}:
 \begin{definition}[Circuit Walk]\label{def:circuitwalk}
 Let $P = \{\bm{x}\in \mathbb{R}^n: A\bm{x} = \bm{b}, B\bm{x}\leq \bm{d}\}$ be a polyhedron. For a vertex $\bm{v}$ of $P$, we call a sequence of $\bm{v} = \bm{x_0},...,\bm{x_k}$ a circuit walk of length $k$ if for $i = 0,...,k-1$:
 \begin{enumerate}
     \item $\bm{x_i} \in P$
     \item $\bm{x_{i+1}} = \bm{x_i} +\alpha_i \bm{g_i}$ for some $\bm{g_i} \in \mathcal{C}(A,B)$ and $\alpha_i > 0$ and
     \item $\bm{x_i}+\alpha_i \bm{g_i}$ is infeasible for all $\alpha > \alpha_i$.
 \end{enumerate}
 \end{definition}
A circuit $\bm{g} \in \mathcal{C}(A,B)$ such that $\bm{x_i} + \alpha \bm{g} \in P$ for some $\alpha > 0$ is called a {\em feasible circuit} at $\bm{x_i}\in P$, or simply feasible, if $\bm{x_i}$ is clear from the context. Our definition assumes that feasible circuits are used for steps of maximal length, and that the walk begins at a vertex $\bm{v} = \bm{x_0}$. We do not require that the walk terminates at a vertex $\bm{x_k}$.

In \cite{bdf-16}, a hierarchy of different types of circuit walks was studied. We summarize this hierarchy into three types of circuit walks: edge walks, vertex walks, and general walks. 

\textit{Edge walks}, see Figure \ref{fig:edge_walk}, are walks through the polyhedron where each step is along an edge of the polyhedron. This is the type of walk performed by the Simplex method. A vertex circuit walk, or simply \textit{vertex walk}, shown in Figure \ref{fig:non-edge_walk}, is a walk for which all $\bm{x_i}$ are vertices, and at least one step of the walk is not along an edge. A general circuit walk, or simply \textit{general walk}, is neither an edge walk nor a vertex walk; see Figure \ref{fig:gen_walk} for an example. A general walk has at least one $\bm{x_i}$ that is not a vertex. For the purposes of this paper, we are interested in edge and general circuit walks. 

\begin{figure}[b]
     \centering
     \begin{subfigure}[b]{0.3\textwidth}
         \centering
         \subcaptionbox{Edge Walk \label{fig:edge_walk}}{
\begin{tikzpicture}[scale = 0.75]
    \foreach \x in {0,1,...,4} {
        \foreach \y in {0,1,...,4} {
            \fill[color=black] (\x,\y) circle (0.05);
        }
    }
\draw[-,ultra thick] (1,1)--(3,1);
\draw[-,ultra thick] (3,1)--(3,3);
\draw[-,ultra thick] (3,3)--(1,3);
\draw[-,ultra thick] (1,3)--(1,1);
\draw[-latex,ultra thick, red] (1,1)--(3,1);
\draw[-latex,ultra thick, red] (3,1)--(3,3);
\draw[-latex,ultra thick, red] (3,3)--(1,3);
\end{tikzpicture}}
     \end{subfigure}
     \hfill
     \begin{subfigure}[b]{0.3\textwidth}
         \centering
         \subcaptionbox{Vertex Walk \label{fig:non-edge_walk}}{
\begin{tikzpicture}[scale = 0.5]
    \foreach \x in {0,1,...,6} {
        \foreach \y in {0,1,...,6} {
            \fill[color=black] (\x,\y) circle (0.05);
        }
    }
\draw[-,ultra thick] (2,0)--(4,0);
\draw[-,ultra thick] (4,0)--(6,2);
\draw[-,ultra thick] (6,2)--(6,4);
\draw[-,ultra thick] (6,4)--(4,6);
\draw[-,ultra thick] (4,6)--(2,6);
\draw[-,ultra thick] (2,6)--(0,4);
\draw[-,ultra thick] (0,4)--(0,2);
\draw[-,ultra thick] (0,2)--(2,0);
\draw[-latex,ultra thick, red] (0,2)--(6,2);
\draw[-latex,ultra thick, red] (6,2)--(2,6);
\draw[-latex,ultra thick, red] (2,6)--(4,6);
\draw[-latex,ultra thick, red] (4,6)--(4,0);
\end{tikzpicture}}
     \end{subfigure}
     \hfill
     \begin{subfigure}[b]{0.3\textwidth}
         \centering
         \subcaptionbox{General Walk \label{fig:gen_walk}}{
\begin{tikzpicture}[scale = 0.75]
    \foreach \x in {0,1,...,4} {
        \foreach \y in {0,1,...,4} {
            \fill[color=black] (\x,\y) circle (0.05);
        }
    }
\draw[-,ultra thick] (1,4)--(0,3);
\draw[-,ultra thick] (0,3)--(0,2);
\draw[-,ultra thick] (0,2)--(2,0);
\draw[-,ultra thick] (2,0)--(4,2);
\draw[-,ultra thick] (4,2)--(2,4);
\draw[-,ultra thick] (2,4)--(1,4);
\draw[-latex,ultra thick, red] (1,4)--(3.5,1.5);
\draw[-latex,ultra thick, red] (3.5,1.5)--(2,0);
\end{tikzpicture}}
     \end{subfigure}
        \caption{Types of Circuit Walks \cite{bv-17}.}
        \label{fig:type_of_circ_walks}
\end{figure}
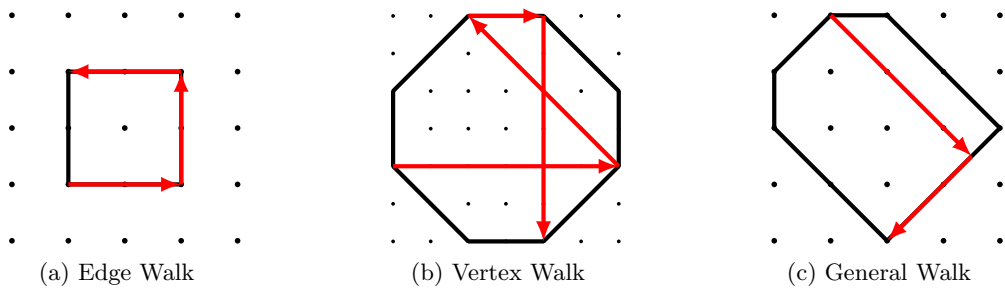

Circuit walks are relevant in the studies of circuit diameters \cite{bfh-14} and for \textit{circuit augmentation schemes} \cite{dhl-15}. Circuit augmentation can be used to solve linear programs \cite{bv-19b,dhl-15,gdl-14}; it is a direct generalization of the primal Simplex method. As the set of circuits can be used to define optimality criteria for a linear program \cite{g-75}, there always exists an improving circuit until one reaches an optimum. In practice, circuit augmentation is useful for traversing highly degenerate polyhedra \cite{gdl-14}. 

We use the Simplex terminology of a pivot and a pivot rule to discuss the choice of improving circuit in each step. Our goal is not only to exhibit that certain combinatorial algorithms lead to circuit walks over a pseudoflow polyhedron, but also to identify the pivot rule for a circuit augmentation scheme that replicates the walk. The algorithms discussed in Section \ref{sec:alg_of_int} relate to {\em Dantzig's rule} and {\em steepest-ascent}. We recall formal definitions for a maximization problem \cite{dhl-15}. 

\begin{definition}[Dantzig's Rule]\label{def:dantzig}
Given a polyhedron $P = \{\bm{x}\in \mathbb{R}^n: A\bm{x} = \bm{b}, B\bm{x}\leq \bm{d}\}$, a feasible solution $\bm{x_0} \in P$, and an objective function $\bm{c} \in \mathbb{R}^n$, choose the circuit $\bm{g} \in \mathcal{C}(A,B)$ for which $\bm{c^T}\bm{g}$ is maximized among all feasible circuits at $\bm{x_0}$.
\end{definition}

Dantzig's rule can be seen as a generalization or relaxation of  a largest-coefficient rule in the Simplex method \cite{v-14}. A {\em Dantzig circuit} $\bm{g}$ provides the overall best improvement $\bm{c^T}\bm{g}$ to the objective function value if added to the current solution $\bm{x_0}$. For a minimization problem, the value of $-\bm{c^T}\bm{g}$ is maximized. In contrast, steepest-ascent takes into account a normalization of the length of the circuit \cite{bv-19b}. 

\begin{definition}[Steepest-ascent Circuit]\label{def:steepest}
Given a polyhedron $P = \{\bm{x}\in \mathbb{R}^n: A\bm{x} = \bm{b}, B\bm{x}\leq \bm{d}\}$, a feasible solution $\bm{x_0} \in P$, and an objective function $\bm{c} \in \mathbb{R}^n$, a steepest-ascent circuit at $\bm{x_0}$ is a circuit $\bm{g} \in \mathcal{C}(A,B)$ that maximizes $\bm{c^T}\bm{g}/||B\bm{g}||_1$ among all feasible circuits at $\bm{x_0}$.
\end{definition}
Here, $||B\bm{g}||_1$ denotes the $1$-norm of vector $B\bm{g}$. A {\em steepest-descent circuit} is defined analogously, with minimization instead of maximization. 

\subsection{Contributions and Outline} 

Our main contribution is to show that three classical network flows algorithms are combinatorial interpretations of circuit augmentation over {\em pseudoflow polyhedra}. We introduce and study these polyhedra in Section \ref{sec:def_pseudo_poly}. In Section \ref{sec:alg_of_int}, we exhibit the types of circuit walks constructed in a run of the combinatorial algorithms, and devise objective functions and pivot rules that lead to the walks. We conclude with a brief outlook and a fourth algorithm that traces a circuit walk in Section \ref{sec:conclusion}. 

In Section \ref{sec:formaldef}, we devise a formal definition of a pseudoflow polyhedron (\ref{pseudo_poly_form}) (Definition \ref{def:pseudoflow}); the formulation is based on an explicit representation of the possible violation of flow balance through a pair of slack variables for each node. In Section \ref{sec:vertices}, we prove the existence of a vertex in (\ref{pseudo_poly_form}) corresponding to a zero pseudoflow (Lemma \ref{lem:feas_vert}); it forms the start of the circuit walks constructed in Section \ref{sec:alg_of_int}.

In Section \ref{sec:circuits}, we provide a characterization of the set of circuits of (\ref{pseudo_poly_form}). We show that they correspond to simple undirected cycles underlying a {\em pseudoflow network} (Theorem \ref{thm:pseudo_circs}). We translate this characterization to the description of (\ref{pseudo_poly_form}) (Corollary \ref{cor:final_circ_char}), distinguishing different types of circuits depending on their use of  pseudoflow variables and slack variables. This exhibits the possible steps of circuit augmentation schemes over (\ref{pseudo_poly_form}): in each iteration, the current pseudoflow is changed by sending flow along a path (a {\em path circuit)} or along a cycle (a {\em cycle circuit}). 
 
In Section \ref{sec:alg_of_int}, we study three combinatorial network flows algorithms that are based on iterative updates to pseudoflows. In Section \ref{sec:sspa}, we begin with the Successive Shortest Path Algorithm (Algorithm \ref{alg:SSPA}) for general min-cost flow problems \cite{e-82}.
We prove that it performs a circuit walk over (\ref{pseudo_poly_form}) (Lemma \ref{lem:sspa_circ_walk}) and exhibit that it performs a general walk; in fact, a flow at termination of the Successive Shortest Path Algorithm is not necessarily a vertex of (\ref{MCF}). We prove that a circuit augmentation scheme using Dantzig's rule replicates a run of the algorithm (Theorem \ref{thm:sspa_dantzig}).

In Section \ref{sec:GAPA}, we extend these results to the Generic Augmenting Path Algorithm (Algorithm \ref{GAP}) for max flow problems. As we will see, the algorithm essentially is a special case of the Successive Shortest Path Algorithm; the pseudoflow network in this simpler setting captures the standard reformulation of max flow as a circulation problem. We prove that the Generic Augmenting Path Algorithm also performs a circuit walk over (\ref{pseudo_poly_form}) (Lemma \ref{lem:gapa_circ_walk}) and again exhibit that it performs a general walk. We prove that any pivot rule replicates a run of the algorithm (Theorem \ref{thm:gap_aug}); if shortest paths are used for the augmentation, one can provide an objective function such that a steepest-ascent circuit augmentation scheme replicates a run (Corollary \ref{cor:sapa_saa}).

Next, we study the Hungarian Method (Algorithm \ref{alg:hung}) for assignment problems. We prove that the Hungarian Method performs a circuit walk over (\ref{pseudo_poly_form}) (Lemma \ref{lem:hm_circ_walk}). Further, we show that for assignment problems all feasible circuits at vertices of a certain face of (\ref{pseudo_poly_form}), in fact, follow an edge direction (Theorem \ref{thm:hm_edge_walk}). This implies that the Hungarian Method performs an edge walk over (\ref{pseudo_poly_form}) (Corollary \ref{cor:hm_edge_walk}). We prove that a circuit augmentation scheme using Dantzig's rule over (\ref{pseudo_poly_form}) replicates a run of the algorithm (Theorem \ref{thm:hm_dantzig}), as does a primal Simplex run with the largest-coefficient rule (Corollary \ref{cor:hm_dantzig}). We arrive at purely primal interpretations of these algorithms, whereas the Successive Shortest Path Algorithm and the Hungarian Method are typically understood to be primal-dual algorithms. Essentially, we identify that the framework of (primal) circuit augmentation over the pseudoflow polyhedron is an `umbrella' able to capture these algorithms. 

Finally, in Section \ref{sec:conclusion}, we explain some promising directions for further research. In particular, we show some preliminary results for the Preflow-Push Algorithm (Algorithm \ref{alg:PFP}). Going through similar motions as in Section \ref{sec:alg_of_int}, we prove that the Preflow-Push Algorithm performs a circuit walk (Lemma \ref{lem:pfp_circ_walk}) and provide an example in which it performs a general walk. The existence of a pivot rule that can replicate a run of the algorithm remains an open question.

\section{Pseudoflow Polyhedron}\label{sec:def_pseudo_poly}
We begin with a formal definition of a \textit{pseudoflow polyhedron}. Then, we exhibit the existence of a special vertex corresponding to a zero pseudoflow and provide a characterization of the set of circuits of the polyhedron. These observations will be valuable tools for our discussion in Section \ref{sec:alg_of_int}.

\subsection{Formal Definition}\label{sec:formaldef}

The main constraints of min-cost flow problems are the so-called {\em flow balance constraints}. In (\ref{MCF}), they form the equality system $A_G\bm{x}=\bm{b}$. The flow balance constraints force nodes $i$ to satisfy certain supply ($b_i>0$) or demand  ($b_i<0$) values, measured as the difference of outgoing and incoming flow. Algebraically, the flow balance constraint for node $i$ is $$\sum\limits_{(i,j)\in A} x_{ij} - \sum\limits_{(j,i)\in A} x_{ji} = b_i.$$
In this paper, we are interested in algorithms for min-cost flow problems that build on pseudoflows instead of primal feasible flows. Formally, a \textit{pseudoflow} is a function $\bm{x}:A\rightarrow \mathbb{R}^+$  which satisfies the capacity constraints, i.e., the domain constraints $\bm{\ell}\leq \bm{x}\leq \bm{u}$ of (\ref{MCF}), but not necessarily the flow balance constraints \cite{o-97}. In the literature, pseudoflows also appear as capacity-feasible flows.

It is easy to translate any min-cost flow problem into an equivalent one with capacity constraints $\bm{0_m} \leq\bm{x} \leq \bm{u'}$, where $\bm{0_m}$ is a zero vector of length $m \in \mathbb{Z}^+$. For the sake of a simple discussion, we assume $\bm{\ell}=\bm{0_m}$. Therefore a minimal representation of the set of pseudoflows in a given network would be 
\begin{equation*}
\begin{array}{rccccr}
    &\bm{0_m}&\leq& \bm{x} &\leq& \bm{u}.\\
\end{array}
\end{equation*}

Our goal in Section \ref{sec:alg_of_int} is an interpretation of combinatorial network flows algorithms as circuit augmentation schemes over the set of pseudoflows. To this end, we need an explicit representation of a violation of flow balance. This violation will decrease (or increase) throughout an algorithm until one arrives at a primal feasible and optimal flow. This concept is similar to a primal-dual approach to (\ref{MCF}), but its implementation contrasts sharply: we are working towards a purely {\em primal} interpretation of algorithms without use of dual variables. In contrast, restricted primal problems in a primal-dual approach minimize violation of complementary slackness with respect to a current dual feasible solution \cite{gw-96}.

We introduce {\em relaxed flow balance constraints}, where non-negative slack variables $s^+_{i}$ and $s^-_{i}$ for each node $i$ `correct' a violation of flow balance by setting
$$\sum\limits_{(i,j)\in A} x_{ij} - \sum\limits_{(j,i)\in A} x_{ji} -s^+_{i}+s^-_{i} = b_i.$$ The slack variables $s^+_{i}$ and $s^-_{i}$ allow a measurement of the violation of flow balance at node $i$, as well as a distinction between a positive or negative violation. We provide a formal definition.

\begin{definition}[Pseudoflow Polyhedron]\label{def:pseudoflow}
Let $G=(N, A)$ be a directed network, $A_G \in \mathbb{R}^{n \times m}$ the node-arc incidence matrix for $G$, $\bm{b} \in \mathbb{R}^{n}$ the supply or demand values for each node, and $\bm{u}\in \mathbb{R}^{m}$ the upper capacities for each arc. The {\em pseudoflow polyhedron} for $G$ is defined as the set of $(\bm{x},\bm{s^+},\bm{s^-})$ satisfying 
\begin{equation*}
\begin{array}{rcrcrcr}
    A_G\bm{x} & - & I\bm{s^+} & + & I\bm{s^-}   & =   & \bm{b}\\
         &   &      & \bm{0_m} & \leq \bm{x} & \leq & \bm{u}\\
         &   &      &   & \bm{s^+}      & \geq & \bm{0_n}\\ 
         &   &      &   & \bm{s^-}      & \geq & \bm{0_n} \tag{pseudo}
\end{array}\label{pseudo_poly_form}
\end{equation*}

Here, $\bm{x} \in \mathbb{R}^{m}$ corresponds to a pseudoflow on the arcs of the network, and $\bm{s^+},\bm{s^-} \in \mathbb{R}^{n}$ are slack variables for each node.
\end{definition}

For a simple wording, we use the tag (\ref{pseudo_poly_form}) to refer both to the formulation itself and to the underlying polyhedron, depending on the context.

\subsection{Vertices}\label{sec:vertices}

Next, we take a closer look at a vertex of (\ref{pseudo_poly_form}) that will serve as a starting solution for the algorithms discussed in Section \ref{sec:alg_of_int}. 

Observe that (\ref{pseudo_poly_form}) lies in $\mathbb{R}^{m+2n}$. In order for a point $\bm{y}=(\bm{x},\bm{s^+},\bm{s^-})$ to be a vertex of (\ref{pseudo_poly_form}) there must be $m+2n$ linearly independent active constraints (i.e., constraints satisfied with equality) at $\bm{y}$. Thus, if one can show that the rank of a submatrix of row vectors of active constraints at some $\bm{y}$ in (\ref{pseudo_poly_form}) is at least $m+2n$, then $\bm{y}$ is a vertex. We prove that a vector $(\bm{x},\bm{s^+},\bm{s^-}) \in \mathbb{R}^{m+2n}$ with $\bm{x}=\bm{0_m}$ and $\bm{s^+},\bm{s^-}$ chosen suitably is a vertex of (\ref{pseudo_poly_form}), and that this vertex is unique. 

\begin{lemma}\label{lem:feas_vert}
A vector $(\bm{x},\bm{s^+},\bm{s^-}) \in \mathbb{R}^{m+2n}$ where $\bm{x}=\bm{0_m}$ and, for all $i\in N$, $s^+_i = -b_i$, $s^-_i = 0$ for $b_i \leq 0$ and $s^-_i=b_i$, $s^+_i=0$ for $b_i > 0$ is a vertex of the pseudoflow polyhedron. Further, it is the unique vertex with $\bm{x}=\bm{0_m}$.
\end{lemma}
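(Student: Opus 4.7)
The plan is to verify the two standard vertex criteria for the proposed point $\bm{y} = (\bm{0_m}, \bm{s^+}, \bm{s^-})$: first that $\bm{y}$ is feasible in (\ref{pseudo_poly_form}), and second that the system of active constraints at $\bm{y}$ has rank $m+2n$. Feasibility is immediate, since plugging $\bm{x}=\bm{0_m}$ into the equality block reduces it to $\bm{s^-}-\bm{s^+}=\bm{b}$, which holds node by node under the prescribed assignment, while the bound constraints are trivially satisfied (assuming $\bm{u}\geq \bm{0_m}$, which we have since $\bm{\ell}=\bm{0_m}\leq\bm{u}$).

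For the vertex property, I would list the active constraints at $\bm{y}$: the $n$ equalities with row block $[A_G,\,-I,\,I]$, the $m$ lower bounds $\bm{x}\geq \bm{0_m}$ (all active, contributing a full $I_m$ block in the $\bm{x}$-coordinates), and exactly $n$ of the $2n$ slack non-negativities, namely $s^+_i=0$ when $b_i>0$ and $s^-_i=0$ when $b_i\leq 0$. Using the identity block from the active lower bounds to eliminate the $A_G$-part of the equality rows, invertibility of the full active-constraint matrix reduces to invertibility of a $2n\times 2n$ matrix on the slack coordinates. By pairing, for each node $i$, the active slack row with the corresponding equality row and placing the columns for $s^+_i$ and $s^-_i$ adjacently, this matrix becomes block-diagonal with $n$ blocks of size $2\times 2$, each of determinant $\pm 1$ regardless of the sign of $b_i$. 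Hence the full active-constraint matrix has rank $m+2n$ and $\bm{y}$ is a vertex.

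For uniqueness, I would restrict attention to the slice $\bm{x}=\bm{0_m}$, where the equality system decouples per node into $s^-_i-s^+_i=b_i$ with $s^+_i,s^-_i\geq 0$. When $b_i>0$, feasibility alone forces $s^+_i=0$ and $s^-_i=b_i$, and symmetrically for $b_i<0$. The only delicate case is $b_i=0$, where feasibility merely requires $s^+_i=s^-_i\geq 0$; here I would argue that if $s^+_i=s^-_i>0$, then the direction $(\bm{0_m},\bm{e_i},\bm{e_i})$ preserves all active constraints locally in both signs, contradicting $\bm{y}$ being a vertex. Ruling out this last case is the main obstacle of the proof, but it is handled quickly by exhibiting this explicit direction in the kernel of the active-constraint rows.
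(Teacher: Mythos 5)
Your feasibility check and your rank argument for the vertex property are correct and follow essentially the same route as the paper: the paper reorders rows and columns and row-reduces the active-constraint matrix to an upper-triangular form with unit diagonal, while you eliminate the $A_G$-block via the active bounds $\bm{x}\geq\bm{0_m}$ and observe a block-diagonal structure of $2\times 2$ blocks of determinant $\pm 1$ on the slack coordinates; these are the same computation packaged differently. (One small imprecision: when $b_i=0$ both $s_i^+=0$ and $s_i^-=0$ are active, so ``exactly $n$'' should read ``at least $n$''; this does not affect the rank bound, since you only need to select $n$ of them.)

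The genuine flaw is in the uniqueness argument. The claim that for $b_i>0$ ``feasibility alone forces $s_i^+=0$ and $s_i^-=b_i$'' is false: on the slice $\bm{x}=\bm{0_m}$ the node-$i$ constraints read $s_i^- - s_i^+ = b_i$ with $s_i^\pm\geq 0$, which admits the one-parameter family $(s_i^+,s_i^-)=(t,\,b_i+t)$ for every $t\geq 0$; the symmetric claim for $b_i<0$ fails the same way. Fortunately, the argument you reserve for the case $b_i=0$ is exactly what is needed in general: whenever a feasible point with $\bm{x}=\bm{0_m}$ differs from the prescribed one at some node $i$, both $s_i^+>0$ and $s_i^->0$ hold simultaneously (their difference being fixed at $b_i$), and then $\pm(\bm{0_m},\bm{e_i},\bm{e_i})$ is a feasible direction in both signs, so the point lies in the relative interior of a segment of the polyhedron and is not a vertex. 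Applying this uniformly to all three sign cases of $b_i$ closes the gap; with that repair your proof is complete.
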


\begin{proof}
The $n$ relaxed flow balance constraints are equality constraints, and thus always active. The corresponding row vectors are linearly independent: $\bm{s^+}$ and $\bm{s^-}$ appear in them in combination with an identity matrix. It remains to identify which domain constraints from $\bm{x}\geq \bm{0_m}$, $\bm{s^\pm} \geq \bm{0_n}$ can be added to the set of equality constraints while retaining linear independence. 

As $\bm{x}=\bm{0_m}$, the corresponding constraints $\bm{x}\geq \bm{0_m}$ are active and the constraints $\bm{x}\leq \bm{u}$ are inactive. By construction, for each pair $s^\pm_i$, at least one constraint is active: $s^-_i = 0$ for $b_i \leq 0$ and $s^+_i=0$ for $b_i > 0$. We call these $n$ variables `guaranteed to be zero'. 

Consider a matrix description of (\ref{pseudo_poly_form}) and begin with an ordering of rows and columns as in the formulation. Reorder the columns such that the $n$ variables $s_i^\pm$ not guaranteed to be zero appear first. Reorder the rows such that the constraints $\bm{x}\geq \bm{0_m}$ appear below the equality constraints, followed by the constraints $s_i^\pm \geq 0$ for the $s_i^\pm$ guaranteed to be zero. 
 
Now, row-reduce the submatrix of equality constraints to reduced echelon form. The result for the whole constraint matrix is an upper triangular form with all $m+2n$ entries on the main diagonal equal to $1$. This implies that the $m$ constraints $\bm{x}\geq \bm{0_m}$ and the $n$ constraints $s_i^\pm \geq 0$ for $s_i^\pm$ guaranteed to be zero can be added to the set of active constraints while retaining linear independence. This gives a total of $n+m+n=m+2n$ linearly independent rows.

Uniqueness follows from the fact that vertices are necessarily formed from an inclusion-maximal set of active constraints. The choice of $s^\pm_i$ always sets at least one of the variables to $0$, and both if $b_i=0$. This proves the claim.
\end{proof}

Recall that circuit augmentation schemes are defined to start at a vertex (Definition \ref{def:circuitwalk}). Lemma \ref{lem:feas_vert} shows that a zero pseudoflow corresponds to a unique vertex of (\ref{pseudo_poly_form}). A combinatorial algorithm starting with a zero pseudoflow can be interpreted to start at that vertex.

\subsection{Circuit Characterization}\label{sec:circuits}

We now turn to a characterization of the circuits of (\ref{pseudo_poly_form}). First, observe that the constraint matrix of (\ref{pseudo_poly_form}) is totally-unimodular. Node-arc incidence matrices, such as $A_G$, are instances of a totally-unimodular matrix. The matrix of (\ref{pseudo_poly_form}) combines $A_G$ with identity matrices for the slack variables to form the equality constraints, and the inequality constraints are (positive or negative) unit rows for all variables. Thus, the combined constraint matrix retains total unimodularity. This implies that the circuits of (\ref{pseudo_poly_form}) can only contain $\pm 1$ or $0$ as entries \cite{o-10}. A value of $-1$ for an arc is interpreted as flow in the opposite direction of the arc.

In fact, the relation to node-arc incidence matrices leads to a complete answer. It is well known that the circuits of a node-arc incidence matrix, or equivalently (\ref{MCF}), are the (simple) cycles of the underlying undirected network, with flow $\pm 1$ on each arc determined by an arbitrary orientation of flow along the cycle; see, e.g., \cite{amo-93,o-06}. We transfer this characterization to (\ref{pseudo_poly_form}) by exhibiting that it represents the set of feasible flows (\ref{MCF}) in a transformed network: one can view the slack variables in (\ref{pseudo_poly_form}) as \textit{slack arcs} which connect each node in the network to a dummy node outside of the original network. Specifically, $s^+_i$ is an arc from the dummy node to node $i$, while $s^-_i$ is an arc from node $i$ to the dummy node. The addition of the dummy node and slack arcs yields a new network, which we call the \textit{pseudoflow network}; an example is shown in Figure \ref{fig:trans_net}. 

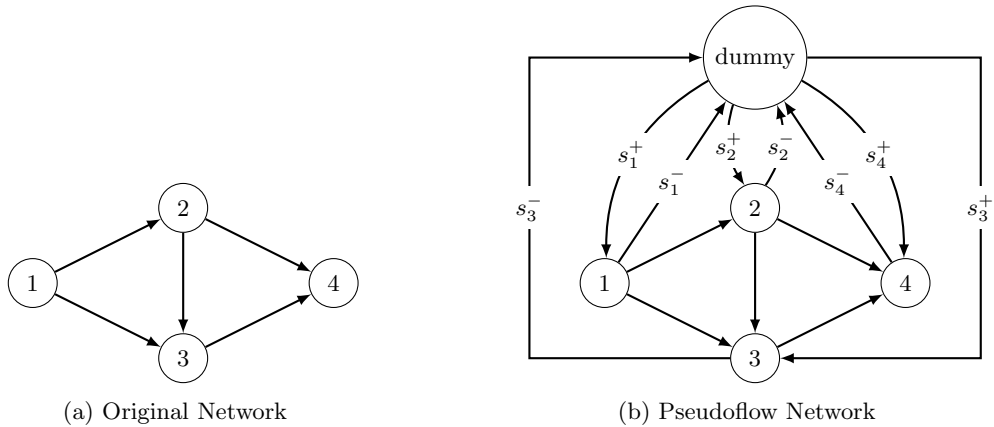
\begin{figure}[t]
     \centering
     \begin{subfigure}[b]{0.475\textwidth}
         \centering
         \subcaptionbox{Original Network \label{fig:orig_net}}{
\begin{tikzpicture}[scale = 0.5]
\tikzset{vertex/.style = {shape=circle,draw,minimum size=2em}}
\tikzset{edge/.style = {-triangle 90,fill=black}}
\tikzset{edgedd/.style = {dashed,-triangle 90,fill=black}}
\node[vertex] (1) at (-4,0) {1};
\node[vertex] (2) at (0,2) {2};
\node[vertex] (3) at (0,-2) {3};
\node[vertex] (4) at (4,0) {4};
\draw[-latex,thick] (1)--(2) {};
\draw[-latex,thick] (1)--(3) {};
\draw[-latex,thick] (2)--(3) {};
\draw[-latex,thick] (2)--(4) {};
\draw[-latex,thick] (3)--(4) {};
\end{tikzpicture}}
     \end{subfigure}
     \hfill
     \begin{subfigure}[b]{0.475\textwidth}
         \centering
         \subcaptionbox{Pseudoflow Network \label{fig:pseudo_net}}{
\begin{tikzpicture}[scale = 0.5]
\tikzset{vertex/.style = {shape=circle,draw,minimum size=2em}}
\node[vertex] (dummy) at (0,6) {dummy};
\node[vertex] (1) at (-4,0) {1};
\node[vertex] (2) at (0,2) {2};
\node[vertex] (3) at (0,-2) {3};
\node[vertex] (4) at (4,0) {4};
\draw[-latex,thick] (1)--(2) {};
\draw[-latex,thick] (1)--(3) {};
\draw[-latex,thick] (2)--(3) {};
\draw[-latex,thick] (2)--(4) {};
\draw[-latex,thick] (3)--(4) {};
\draw[-latex,thick] (dummy) to [bend right = 25] node[midway, fill=white] {$s^+_{2}$} (2);
\draw[-latex,thick] (2) to [bend right = 25] node[midway, fill=white] {$s^-_{2}$} (dummy);
\draw[-latex,thick] (dummy) to [bend right = 30] node[midway, fill=white] {$s^+_{1}$} (1);
\draw[-latex,thick] (1) to [bend right = 0] node[midway, fill=white] {$s^-_{1}$} (dummy);
\draw[-latex,thick] (dummy) to [bend right = -30] node[midway, fill=white] {$s^+_{4}$} (4);
\draw[-latex,thick] (4) to [bend right = 0] node[midway, fill=white] {$s^-_{4}$} (dummy);
\draw[-latex, thick] (3)--(-6,-2) -- node[midway, fill=white] {$s^-_3$} (-6,6) -- (dummy);
\draw[-latex, thick] (dummy)--(6,6) -- node[midway, fill=white] {$s^+_3$} (6,-2) -- (3);
\end{tikzpicture}}
     \end{subfigure}
        \caption{An example of a network (left) that is transformed into its corresponding pseudoflow network (right).}
        \label{fig:trans_net}
\end{figure}

We formally verify these observations and obtain a characterization of the circuits of the pseudoflow polyhedron in terms of the related pseudoflow network.

\begin{theorem}\label{thm:pseudo_circs}
The circuits of a pseudoflow polyhedron $P$ are the simple undirected cycles underlying the pseudoflow network, $G_P$, with flow $\pm 1$ on each arc determined by an arbitrary orientation of flow along the cycle.
\end{theorem}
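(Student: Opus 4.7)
The plan is to reduce the circuit characterization of (\ref{pseudo_poly_form}) to the classical characterization of circuits for network flow polyhedra defined by a node-arc incidence matrix. As a first step, I would unpack Definition \ref{def:circuits} in the present setting: writing the equality matrix of (\ref{pseudo_poly_form}) as $M := [A_G \;\; -I_n \;\; I_n]$, and observing that the inequality matrix $B$ is an identity acting on the domain constraints $\bm{x} \geq \bm{0_m}$, $\bm{x} \leq \bm{u}$, $\bm{s^\pm} \geq \bm{0_n}$, the condition that $B\bm{g}$ be support-minimal reduces to the condition that $\bm{g}$ itself be support-minimal among nonzero elements of $\ker(M)$. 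Combined with the total-unimodularity observation preceding the theorem, which forces entries in $\{-1,0,+1\}$, the coprimality normalization is automatic for any nonzero support-minimal integer $\bm{g} \in \ker(M)$.

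The second step is to identify $M$ with (essentially) the node-arc incidence matrix of $G_P$. Here I would do the sign bookkeeping carefully: the column for $s^+_i$ has a single $-1$ in row $i$, which is consistent with interpreting $s^+_i$ as an arc from the dummy node into node $i$; symmetrically, the column for $s^-_i$ has a single $+1$ in row $i$, consistent with interpreting $s^-_i$ as an arc from $i$ to the dummy node. Thus $M$ equals the full node-arc incidence matrix $A_{G_P}$ with the row for the dummy node deleted. Since each column of $A_{G_P}$ sums to zero, the missing dummy row is the negation of the sum of the remaining rows; it is redundant, and $\ker(M) = \ker(A_{G_P})$.

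The third step is to invoke the classical characterization (cited in the excerpt, e.g., \cite{amo-93,o-06}): for any directed graph $H$, the support-minimal nonzero integer vectors in $\ker(A_H)$ are exactly the simple undirected cycles of $H$, with entries $\pm 1$ determined by an arbitrary choice of orientation for traversal. Applied to $H = G_P$, this immediately yields that the circuits of (\ref{pseudo_poly_form}) are the simple undirected cycles of $G_P$ with $\pm 1$ entries, as claimed. Note that $G_P$ is automatically connected through the dummy node (every original node is incident to both of its slack arcs), so no assumption on connectivity of $G$ is needed.

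The main obstacle is not conceptual but clerical: aligning the sign conventions of the $-I_n$ and $+I_n$ blocks with the chosen orientations of the slack arcs, and being explicit that deleting the linearly dependent dummy row preserves the kernel. Once these points are handled, the heavy lifting is done by the established circuit theory of node-arc incidence matrices, and the theorem follows directly.
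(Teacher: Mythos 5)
Your proposal is correct and follows essentially the same route as the paper: identify $[A_G \;\; -I \;\; I]$ as the node-arc incidence matrix of the pseudoflow network $G_P$ with the (redundant) dummy row deleted, conclude equality of kernels, and invoke the classical characterization of support-minimal kernel elements of incidence matrices as simple cycles. Your explicit handling of the reduction from support-minimality of $B\bm{g}$ to support-minimality of $\bm{g}$, and of the sign conventions for the slack arcs, only makes explicit steps the paper leaves implicit.
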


\begin{proof}
It suffices to prove that the kernel of a node-arc incidence matrix $A_P$ for the pseudoflow network $G_P$ is equal to the kernel of the matrix $F = \begin{bmatrix} A_G & -I & I \end{bmatrix}$ of the flow balance constraints in (\ref{pseudo_poly_form}). 

We construct $A_P$. $F$ corresponds to the rows of $A_P$ for all nodes $i$ in the original network $G$. We define $\bm{d} = (\bm{0_m}, \bm{1_n}, \bm{-1_n})$ to be the flow balance row vector for the dummy node in the pseudoflow network. Here, $\bm{1_n}$ denotes a row vector of $n$ consecutive ones. Then 
$$A_P = \begin{bmatrix} F \\ \bm{d}\end{bmatrix}.$$

The rows of any node-arc incidence matrix, such as $A_P$, are linearly dependent: each column has precisely two non-zero entries, a 1 and a -1. This implies that $\bm{d}$ can be expressed as a linear combination of the rows in $F$. In other words, $F$ is a reduced node-arc incidence matrix for $G_P$. Thus $\text{ker }A_P = \text{ker }F$, which proves the claim.
\end{proof}

Theorem \ref{thm:pseudo_circs} provides a characterization of the set of circuits of (\ref{pseudo_poly_form}) with respect to the pseudoflow network. We now describe and categorize the circuits utilizing knowledge of the original network. We denote an undirected path from node $i$ to node $j$ (and oriented from $i$ to $j$) in the original network as $H_{ij}$ , and an undirected cycle (with some orientation) in the original network as $C$. For convenience, we also use $\bm{H_{ij}} \in \mathbb{R}^{m}$ and $\bm{C} \in \mathbb{R}^{m}$ to denote the vector with flow $\pm 1$ on the corresponding arcs with respect to the orientation. We use $\bm{e_i} \in \mathbb{R}^{n}$ to denote the $i^{th}$ unit vector.

\begin{figure}[b!]
     \centering
     \begin{subfigure}[b]{0.475\textwidth}
         \centering
         \subcaptionbox{Trivial Circuit (dashed) \label{fig:triv_circ}}{
\begin{tikzpicture}[scale = 0.5]
\tikzset{vertex/.style = {shape=circle,draw,minimum size=2em}}
\tikzset{edge/.style = {-triangle 90,fill=black}}
\tikzset{edgedd/.style = {dashed,-triangle 90,fill=black}}
\node[vertex] (dummy) at (0,6) {dummy};
\node[vertex] (1) at (-4,0) {1};
\node[vertex] (2) at (0,2) {2};
\node[vertex] (3) at (0,-2) {3};
\node[vertex] (4) at (4,0) {4};
\draw[-latex,thick] (1)--(2) {};
\draw[-latex,thick] (1)--(3) {};
\draw[-latex,thick] (2)--(3) {};
\draw[-latex,thick] (2)--(4) {};
\draw[-latex,thick] (3)--(4) {};
\draw[-latex,thick] (dummy) to [bend right = 25] node[midway, fill=white] {$s^+_{2}$} (2);
\draw[-latex,thick] (2) to [bend right = 25] node[midway, fill=white] {$s^-_{2}$} (dummy);
\draw[-latex,thick] (dummy) to [bend right = 30] node[midway, fill=white] {$s^+_{1}$} (1);
\draw[-latex,thick] (1) to [bend right = 0] node[midway, fill=white] {$s^-_{1}$} (dummy);
\draw[-latex,thick] (dummy) to [bend right = -30] node[midway, fill=white] {$s^+_{4}$} (4);
\draw[-latex,thick] (4) to [bend right = 0] node[midway, fill=white] {$s^-_{4}$} (dummy);
\draw[-latex, thick,dashed] (3)--(-6,-2) -- node[midway, fill=white] {$s^-_3 = -1$} (-6,6) -- (dummy);
\draw[-latex, thick,dashed] (dummy)--(6,6) -- node[midway, fill=white] {$s^+_3 = -1$} (6,-2) -- (3);
\end{tikzpicture}}
     \end{subfigure}
     \hfill
     \begin{subfigure}[b]{0.475\textwidth}
         \centering
         \subcaptionbox{Undirected Path Circuit (dashed) \label{fig:undir_path_circ}}{
\begin{tikzpicture}[scale = 0.5]
\tikzset{vertex/.style = {shape=circle,draw,minimum size=2em}}
\node[vertex] (dummy) at (0,6) {dummy};
\node[vertex] (1) at (-4,0) {1};
\node[vertex] (2) at (0,2) {2};
\node[vertex] (3) at (0,-2) {3};
\node[vertex] (4) at (4,0) {4};
\draw[-latex,thick,dashed] (1) to node[midway, fill=white] {\tiny$x_{12} = -1$} (2);
\draw[-latex,thick] (1)--(3) {};
\draw[-latex,thick,dashed] (2) to node[midway, fill=white] {\tiny $x_{23} = -1$} (3);
\draw[-latex,thick] (2)--(4) {};
\draw[-latex,thick,dashed] (3) to  node[midway, fill=white] {\tiny $x_{34}=-1$} (4) ;
\draw[-latex,thick] (dummy) to [bend right = 25] node[midway, fill=white] {$s^+_{2}$} (2);
\draw[-latex,thick] (2) to [bend right = 25] node[midway, fill=white] {$s^-_{2}$} (dummy);
\draw[-latex,thick] (dummy) to [bend right = 30] node[yshift = 0.25cm, fill=white] {$s^+_{1}$} (1);
\draw[-latex,thick,dashed] (1) to [bend right = 0] node[midway, fill=white] {$s^-_{1} = 1$} (dummy);
\draw[-latex,thick,dashed] (dummy) to [bend right = -30] node[midway, fill=white] {$s^+_{4} = 1$} (4);
\draw[-latex,thick] (4) to [bend right = 0] node[yshift = -0.25cm, fill=white] {$s^-_{4}$} (dummy);
\draw[-latex, thick] (3)--(-6,-2) -- node[midway, fill=white] {$s^-_3$} (-6,6) -- (dummy);
\draw[-latex, thick] (dummy)--(6,6) -- node[midway, fill=white] {$s^+_3$} (6,-2) -- (3);
\end{tikzpicture}}
     \end{subfigure}
        \caption{An example of a trivial circuit (left) and a path circuit (right), depicted in a pseudoflow network.}
        \label{fig:path_circ}
\end{figure}
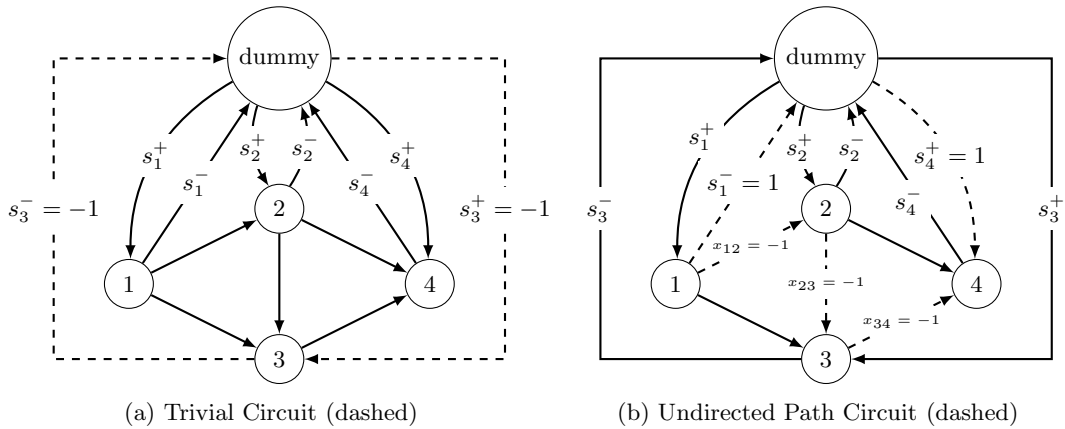

\begin{corollary}\label{cor:final_circ_char}
The circuits of a pseudoflow polyhedron $P$ are:
\begin{itemize}
\item $\pm (\bm{H_{ij}}, \bm{e_i}, \bm{e_j})$, $\pm (\bm{H_{ij}}, \bm{e_i}-\bm{e_j},\bm{0_{n}})$, $\pm(\bm{H_{ij}}, \bm{0_{n}},-\bm{e_i} + \bm{e_j})$, or $\pm(\bm{H_{ij}}, -\bm{e_j}, -\bm{e_i})$ for an undirected path $H_{ij}$ from node $i \in N$ to node $j \in N$ (\textbf{\textit{path circuits}})
    \item $\pm( \bm{C},\bm{0_n},\bm{0_n})$ for an undirected cycle $\bm{C}$ in the original network (\textbf{\textit{cycle circuits}})
    \item $\pm(\bm{0_m}, \bm{e_i}, \bm{e_i})$  
    for all $i \in N$ (\textbf{\textit{trivial circuits}})
\end{itemize}
\end{corollary}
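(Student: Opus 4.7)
The plan is to apply Theorem \ref{thm:pseudo_circs} and enumerate the simple undirected cycles of the pseudoflow network $G_P$ by classifying how each interacts with the added dummy node $d$. Every simple undirected cycle of $G_P$ falls into exactly one of three mutually exclusive cases depending on whether $d$ lies on the cycle, and in the affirmative case, on which pair of slack arcs is used.

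First, if the cycle avoids $d$, it uses only arcs of the original network $G$, so it is a simple undirected cycle $C$ of $G$. With $\pm 1$ flow on each arc determined by an orientation, the associated circuit is $\pm(\bm{C}, \bm{0_n}, \bm{0_n})$, giving the cycle circuits. Second, if the cycle passes through $d$, then as a simple cycle it uses exactly two slack arcs, incident to nodes $i$ and $j$ of $G$. If $i = j$, the two slack arcs must be the parallel pair $s_i^+$ and $s_i^-$; orienting the resulting $2$-cycle so that $s_i^+$ is traversed from $d$ to $i$ and $s_i^-$ from $i$ back to $d$ yields $(\bm{0_m}, \bm{e_i}, \bm{e_i})$, and the reverse orientation yields its negative, producing exactly the trivial circuits.

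The third case, $i \neq j$, is the main content. After removing $d$ and the two incident slack arcs, what remains is an undirected path $H_{ij}$ in $G$. I would fix the orientation of the cycle so that the path is traversed from $i$ to $j$, contributing $\bm{H_{ij}}$ to the arc-flow part of the circuit, and then enumerate the $2 \times 2 = 4$ possible choices of slack arc at each endpoint. For each choice I read off the contribution to $\bm{s^+}$ and $\bm{s^-}$ from the traversal direction: entering $i$ through $s_i^+$ in its natural direction $d \to i$ contributes $+\bm{e_i}$ to $\bm{s^+}$, whereas using $s_i^-$ against its direction (traversed $d \to i$ instead of $i \to d$) contributes $-\bm{e_i}$ to $\bm{s^-}$; symmetrically, leaving $j$ through $s_j^-$ forward contributes $+\bm{e_j}$ to $\bm{s^-}$, while using $s_j^+$ backward contributes $-\bm{e_j}$ to $\bm{s^+}$. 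Combining the two endpoint choices reproduces exactly the four forms $(\bm{H_{ij}}, \bm{e_i}, \bm{e_j})$, $(\bm{H_{ij}}, \bm{e_i} - \bm{e_j}, \bm{0_n})$, $(\bm{H_{ij}}, \bm{0_n}, -\bm{e_i} + \bm{e_j})$, and $(\bm{H_{ij}}, -\bm{e_j}, -\bm{e_i})$, and the opposite cycle orientation supplies the overall $\pm$.

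The main obstacle is the sign bookkeeping in the third case: one must carefully translate "slack arc traversed with or against its orientation" into the correct sign in the appropriate slack coordinate, and verify that the four resulting vectors line up precisely with the four expressions in the statement. Once a single convention for cycle orientation is fixed, this reduces to a short $2 \times 2$ case check. Putting the three cases together with Theorem \ref{thm:pseudo_circs} and the total-unimodularity-based observation that all circuit entries are in $\{-1,0,+1\}$ then yields the claimed exhaustive classification.
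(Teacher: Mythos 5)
Your proposal is correct and follows the same route as the paper: invoke Theorem \ref{thm:pseudo_circs} and classify the simple undirected cycles of the pseudoflow network by whether and how they pass through the dummy node, yielding the cycle, trivial, and path circuits respectively. Your sign bookkeeping for the four endpoint slack-arc choices is carried out more explicitly than in the paper's proof, but the argument is the same, and the signs check out against the convention that $s_i^+$ is an arc from the dummy node into $i$ and $s_i^-$ an arc from $i$ to the dummy node.
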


\begin{proof}
By Theorem \ref{thm:pseudo_circs}, we can prove the claim by identifying the vectors corresponding to simple undirected cycles in the pseudoflow network. First, note that cycles $\bm{C}$ in the original network $G$ also are cycles in the pseudoflow network $G_P$. For any cycle, there are two possible orientations. This gives the \textit{cycle circuits} $\pm(\bm{C},\bm{0_n},\bm{0_n})$.

Any other cycle in the pseudoflow network uses two slack arcs and is incident to the dummy node. The \textit{path circuits} of types $\pm(\bm{H_{ij}}, \bm{e_i}, \bm{e_j})$, $\pm(\bm{H_{ij}}, \bm{e_i} -\bm{e_j},\bm{0_{n}})$, $\pm(\bm{H_{ij}}, \bm{0_{n}},-\bm{e_i} + \bm{e_j})$, or $\pm(\bm{H_{ij}}, -\bm{e_j}, -\bm{e_i})$ for an undirected path $H_{ij}$ from $i$ to $j$ correspond to cycles in the pseudoflow network that are completed by connecting $j$ to $i$ through pairs of slack arcs $(s_i^+,s_j^-)$, $(s_i^+,s_j^+)$, $(s_i^-,s_j^-)$, or $(s_j^+,s_i^-)$, respectively.

Finally, cycles of length two through the dummy node and one node $i \in N$ in the original network correspond to vectors $(\bm{0_m}, \bm{e_i}, \bm{e_i})$ and $(\bm{0_m}, -\bm{e_i}, -\bm{e_i})$. We call these \textit{trivial circuits}, as they do not correspond to a path or cycle in the original network.
\end{proof}

The characterization of circuits in Corollary \ref{cor:final_circ_char} will be a crucial tool in our analysis of the algorithms in Section \ref{sec:alg_of_int}. It allows us to relate the iterations of classical network flows algorithms that are described in view of the original network with the steps of an edge walk or general walk over (\ref{pseudo_poly_form}).

\section{Circuit Augmentation Schemes on Pseudoflows}\label{sec:alg_of_int} 
We discuss three combinatorial network flows algorithms with the goal of describing equivalent circuit augmentation schemes over the pseudoflow polyhedron (\ref{pseudo_poly_form}): the Successive Shortest Path Algorithm for min-cost flow problems (Section \ref{sec:sspa}), the Generic or Shortest Augmenting Path Algorithm for max flow problems (Section \ref{sec:GAPA}), and the Hungarian Method for assignment problems (Section \ref{sec:hm}). We identify the types of circuit walks these algorithms perform over (\ref{pseudo_poly_form}) and find a combination of objective function and pivot rule that {\em replicate} a run of the combinatorial algorithm, i.e., that lead to the same circuit walk if used in a generic circuit augmentation scheme. 

We recall some terms and notation that appear in pseudocode for the algorithms; see, e.g., \cite{amo-93}. \textit{Node potentials}, $\bm{\pi}$, correspond to distance labels to represent (a modified) distance of nodes from other specified nodes in the network. An \textit{excess} node is a node $i \in N$ which has a supply value ($b_i>0$), while a \textit{deficit} node is one which has a demand value ($b_i>0$). The term `excess' is also used to describe the current flow imbalance $e(i)$ of node $i$ during the run of an algorithm.  \textit{Reduced costs} for an arc, typically denoted $c_{ij}^{\pi}$, are defined as $c_{ij} - \pi(i) + \pi(j)$; they adjust the arc cost, $c_{ij}$, by the difference of node potentials at the head and tail of the arc. Finally, the \textit{residual capacity}, $r_{ij}$, of an arc refers to the remaining capacity in a network with respect to a current pseudoflow $\bm{x}$, i.e., the capacity of an arc in the corresponding {\em residual network}. For a simple notation, we describe the construction of a residual network based on a pseudoflow as an update of the original network $G(N,A)$.

\subsection{Successive Shortest Path}\label{sec:sspa}

We begin with the Successive Shortest Path Algorithm (SSPA); see Algorithm \ref{alg:SSPA} for a description in pseudocode \cite{o-97}. The algorithm starts by building the initial set $E$ of excess nodes and the set $D$ of deficit nodes (Lines 5-7). Then it enters the main loop. Some excess node, $k \in E$, and some deficit node, $\ell \in D$, are chosen (Line 10). A shortest path $H_{k \ell}$ (with respect to reduced costs) from $k$ to $\ell$ is identified and a flow of $\delta$ is sent along that path (Lines 12-15). The value of $\delta$ depends on the bottleneck of the path, i.e., the capacity of an arc along the path or how much should be sent, $e(k)$, or received, $-e(\ell)$ (Line 14). After flow has been sent, the residual network along with the corresponding sets and reduced costs are updated (Line 16). This process is repeated until no more excess nodes remain. Then there also are no more deficit nodes since the overall excess and deficit in the network must sum to zero for a feasible solution to exist \cite{amo-93}.

\begin{algorithm}
       \caption{Successive Shortest Path Algorithm}\label{alg:SSPA}
\begin{algorithmic}[1] 
       \STATE \mbox{\textbf{Input:} a network $G(N,A)$ with non-negative costs $\bm{c} \in \mathbb{R}^{m}$}
       \STATE \mbox{\textbf{Begin:}}
       \STATE Set arcs $\bm{x} = \bm{0_m}$
       \STATE Set node potentials $\bm{\pi} = \bm{0_n}$
       \STATE Set node potentials $\bm{c^{\pi}} = \bm{c}$
       \STATE Set excess for node $i$, $e(i) = b_i$ for all $i \in N$
       \STATE Initialize $E$ as the set of nodes where $e(i) > 0$
       \STATE Initialize $D$ as the set of nodes where $e(i) < 0$
       \WHILE{$E \neq \emptyset$}
            \STATE Select some $k \in E$ and $\ell \in D$
            \STATE Determine shortest path distances $d(j)$ from node $k$ to all other nodes in $G(N,A)$ with respect to the reduced costs $c_{ij}^{\pi}$
            \STATE Determine the shortest path (with respect to the reduced costs), $H_{k \ell}$, from $k$ to $\ell$
            \STATE Update $\bm{\pi} = \bm{\pi}-\bm{d}$
            \STATE $\delta = \min\{e(k), -e(\ell), \min\{r_{ij}: (i,j) \in H_{k \ell}\}\}$
            \STATE Augment $\delta$ units of flow along path $H_{k \ell}$
            \STATE Update $\bm{x}$, $G(N,A)$, $E$, $D$, and $\bm{c^{\pi}}$
       \ENDWHILE
       \STATE \RETURN{} min-cost flow $\bm{x}$
\end{algorithmic}
\end{algorithm}

Note that reduced costs $c^{\pi}_{ij}$ are used instead of $c_{ij}$ to determine shortest paths (Line 12). Node potentials $\bm{\pi}$ are updated (Line 12) in order to update the reduced costs in each iteration (Line 16). However, the use of node potentials and reduced costs is an optional part of the algorithm and only done for efficiency \cite{o-93}. The difference between the cost of a path $H_{k \ell}$ from $k$ to $\ell$ with respect to reduced costs and the original costs is the term $- \pi(k) + \pi(\ell)$, as
$$\sum_{(i,j) \in H_{k \ell}} c^{\pi}_{ij} = \sum_{(i,j) \in H_{k \ell}} (c_{ij} - \pi(i) + \pi(j)) = \left(\sum_{(i,j) \in H_{k \ell}} c_{ij}\right) - \pi(k) + \pi(\ell).$$
Since the algorithm only compares paths $H_{k \ell}$ between a fixed $k$ and a fixed $\ell$ to one another, $- \pi(k) + \pi(\ell)$ is a constant. Thus, 
$$\arg\min_{H_{k \ell}}\sum_{(i,j) \in H_{k \ell}} c_{ij}=\arg\min_{H_{k \ell}}\sum_{(i,j) \in H_{k \ell}} c^{\pi}_{ij}.$$
This allows us to frame our discussion in terms of the original costs $c_{ij}$.

First, we prove that the SSPA performs a circuit walk over a certain face of (\ref{pseudo_poly_form}). Specifically, we show that a circuit step is taken in each iteration, beginning at the vertex described in Lemma \ref{lem:feas_vert}. 

\begin{lemma}\label{lem:sspa_circ_walk}
The Successive Shortest Path Algorithm performs a circuit walk over the face, $F$, of (\ref{pseudo_poly_form}) where $s_i^+ = 0$ for all $i \in N$ with $b_i<0$, $s_i^- = 0$ for all $i \in N$ with $b_i>0$, and $s_i^\pm =  0$ for all $i \in N$ with $b_i=0$. 
\end{lemma}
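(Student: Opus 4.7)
The plan is to verify, in order, the three conditions of Definition \ref{def:circuitwalk} for the iterates produced by Algorithm \ref{alg:SSPA}. The starting point, coming from Lines~3--6, is the zero-pseudoflow configuration; through the relaxed flow balance constraints, this is exactly the vertex identified in Lemma \ref{lem:feas_vert}, and one first checks that it lies in $F$. This handles condition~(1) at the initial point.

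Next, I would translate a single main-loop iteration into a circuit step. Pushing $\delta$ units along the residual-network path $H_{k\ell}$ from the chosen excess node $k$ to the chosen deficit node $\ell$ (Line~15) alters $\bm{x}$ by $\pm\delta$ on each arc of the underlying undirected path in $G$, the sign on a given arc being determined by whether the residual edge used it in its forward or reverse direction. Flow balance is preserved at every intermediate node along the path, so the only slacks that move are $s^-_k$ and $s^+_\ell$, each decreasing by $\delta$. Comparing with Corollary \ref{cor:final_circ_char}, this is exactly the path circuit $(\bm{H_{k\ell}}, -\bm{e_\ell}, -\bm{e_k})$ scaled by $\delta > 0$, which verifies condition~(2).

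For maximality (condition~(3)), remaining feasible in $F$ after the step requires $s^-_k - \delta \geq 0$, $s^+_\ell - \delta \geq 0$, and $0 \leq x_{ij} \pm \delta \leq u_{ij}$ on the path arcs. These translate respectively to $\delta \leq e(k)$, $\delta \leq -e(\ell)$, and $\delta \leq r_{ij}$ for each $(i,j) \in H_{k\ell}$, so the choice in Line~14 is precisely the largest feasible step. As a byproduct, the new iterate stays in $F$: only the types of slacks allowed to be nonzero in $F$ change, and the step length keeps them nonnegative. Induction on the main-loop count then gives that every iterate lies in $F$ and each step is a feasible, maximal circuit step.

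The step I expect to be most delicate is the orientation bookkeeping on residual arcs. Since $H_{k\ell}$ lives in the residual network, its edges may point either with or against the corresponding arcs of $G$; one has to ensure that the $\pm 1$ entries of the vector $\bm{H_{k\ell}}$ are assigned consistently with the forward/reverse convention so that the identification with the path circuit of Corollary \ref{cor:final_circ_char} holds throughout the run. Once this is pinned down, termination is immediate: the algorithm halts precisely when $E = \emptyset$, at which point the overall balance of the network forces $D = \emptyset$ as well, so all remaining slacks vanish and the final iterate is a feasible flow in (\ref{MCF}), still inside $F$.
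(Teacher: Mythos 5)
Your proposal is correct and follows essentially the same route as the paper's proof: start at the zero-pseudoflow vertex of Lemma \ref{lem:feas_vert}, identify each augmentation along $H_{k\ell}$ with the scaled path circuit $(\bm{H_{k\ell}},-\bm{e_\ell},-\bm{e_k})$ of Corollary \ref{cor:final_circ_char}, deduce maximality of the step from the bottleneck choice of $\delta$, and induct on the iterations while checking membership in $F$. Your explicit attention to the residual-arc orientation and to which constraints become tight is slightly more detailed than the paper's wording, but it is the same argument.
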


\begin{proof}
The SSPA begins with a zero pseudoflow $\bm{x_0}=\bm{0_m}$ (Line 3). We construct the unique vertex of (\ref{pseudo_poly_form}) corresponding to a zero pseudoflow as in Lemma \ref{lem:feas_vert} by setting $s_i^+ = -b_i$ for all $i \in D$, $s_i^- = b_i$ for all $i \in E$, and $0$ otherwise.
Thus, the algorithm begins at a vertex $\bm{y_0}=(\bm{x_0},\bm{s^+},\bm{s^-})$. Note that $\bm{y_0}$ lies in the specified face $F$.

In the first iteration, the algorithm chooses an excess node $k$ and deficit node $\ell$ (Line 10) and finds a shortest path $H_{k \ell}$ between them in the residual network (Line 11). The same flow $\delta$ is sent along each arc on the path (Line 15) and the residual network is updated accordingly (Line 16).

The path $H_{k \ell}$ corresponds to the path circuits in Corollary \ref{thm:pseudo_circs}; adding a (scaled) path circuit to $\bm{y_0}$ is the only way to arrive at a new point $\bm{y_1}=(\bm{x_1},\bm{s_1^+},\bm{s_1^-})$ in (\ref{pseudo_poly_form}) with the desired change in pseudoflow from $\bm{x_0}$ to $\bm{x_1}$ and corresponding slack arcs updating accordingly, $\bm{s_1^+},\bm{s_1^-}$.
While all path circuits are feasible at $\bm{y_0}$ over (\ref{pseudo_poly_form}), only the path circuit $\bm{g}=(\bm{H_{k \ell}},-\bm{e_\ell},-\bm{e_k})$ satisfies $\bm{y_1}=\bm{y_0} + \delta \bm{g}\in F$. 

By choice of $\delta$ (Line 13) as the bottleneck of residual capacities, excess (represented as $s_k^-$), and demand (represented as $s_\ell^+$), a new variable becomes zero or equal to its upper bound in $\bm{y_1}$. This implies that the step to $\bm{y_1}$ is of maximal length. Thus, the first iteration of the algorithm corresponds to a circuit step from $\bm{y_0}$ to $\bm{y_1}$.

Note that slack variables $s_\ell^+$ and $s_k^-$ strictly decreased to $(s'_\ell)^+$ and $(s'_k)^-$, which now correspond to the remaining deficit and excess at nodes $\ell$ and $k$ for $\bm{y_1}$, respectively. Thus, $\bm{y_1}$ again satisfies all properties used in the above arguments for $\bm{y_0}$. Repeating the arguments for the step from $\bm{y_1}$ to $\bm{y_2}$, and later steps, proves the claim.
\end{proof}

Next, we exhibit an example for which the SSPA performs a general (non-edge) walk. Consider the network in Figure \ref{fig:sspa_cycle_ex_orig}. Nodes $1$ to $12$ are labeled with excess and deficit values; the arcs $(i,j)$ are labeled with pairs of capacity and cost $(u_{ij}, c_{ij})$. 

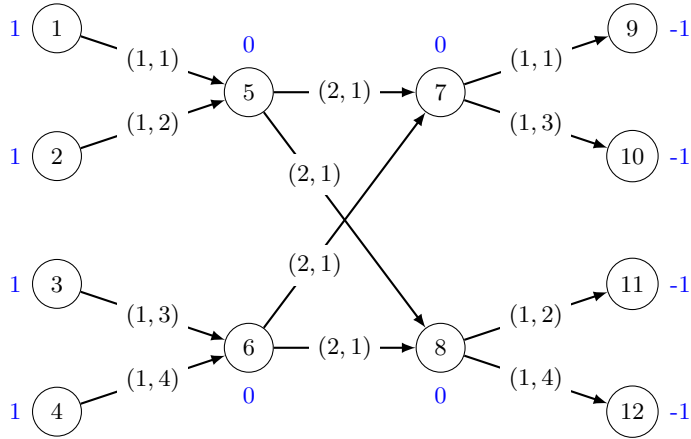
\begin{figure}
    \centering
    \begin{tikzpicture}[scale = 0.85]
        \tikzset{vertex/.style = {shape=circle,draw,minimum size=2em}}
        \tikzset{edge/.style = {-triangle 90,fill=black}}
        \tikzset{edgedd/.style = {dashed,-triangle 90,fill=black}}
        \node[vertex] (1) at (-4,3) {1};
        \node[vertex] (2) at (-4,1) {2};
        \node[vertex] (3) at (-4,-1) {3};
        \node[vertex] (4) at (-4,-3) {4};
        \node[vertex] (5) at (-1,2) {5};
        \node[vertex] (6) at (-1,-2) {6};
        \node[vertex] (7) at (2,2) {7};
        \node[vertex] (8) at (2,-2) {8};
        \node[vertex] (9) at (5,3) {9};
        \node[vertex] (10) at (5,1) {10};
        \node[vertex] (11) at (5,-1) {11};
        \node[vertex] (12) at (5,-3) {12};
        \node[blue] (1s) at (-4.65,3) {1};
        \node[blue] (2s) at (-4.65,1) {1};
        \node[blue] (3s) at (-4.65,-1) {1};
        \node[blue] (4s) at (-4.65,-3) {1};
        \node[blue] (5s) at (-1,2.75) {0};
        \node[blue] (6s) at (-1,-2.75) {0};
        \node[blue] (7s) at (2,2.75) {0};
        \node[blue] (8s) at (2,-2.75) {0};
        \node[blue] (9d) at (5.75,3) {-1};
        \node[blue] (10d) at (5.75,1) {-1};
        \node[blue] (11d) at (5.75,-1) {-1};
        \node[blue] (12d) at (5.75,-3) {-1};
        \draw[-latex,thick] (1)--(5) node[midway, xshift=0mm, yshift = 0mm, fill = white] {$(1,1)$};
        \draw[-latex,thick] (2)--(5) node[midway, xshift=0mm, yshift = 0mm, fill = white] {$(1,2)$};
        \draw[-latex,thick] (3)--(6) node[midway, xshift=0mm, yshift = 0mm, fill = white] {$(1,3)$};
        \draw[-latex,thick] (4)--(6) node[midway, xshift=0mm, yshift = 0mm, fill = white] {$(1,4)$};
        \draw[-latex,thick] (5)--(7) node[midway, xshift=0mm, yshift = 0mm, fill = white] {$(2,1)$};
        \draw[-latex,thick] (6)--(7) node[midway, xshift=-4mm, yshift = -6mm, fill = white] {$(2,1)$};
        \draw[-latex,thick] (5)--(8) node[midway, xshift=-4mm, yshift = 6mm, fill = white] {$(2,1)$};
        \draw[-latex,thick] (6)--(8) node[midway, xshift=0mm, yshift = 0mm, fill = white] {$(2,1)$};
        \draw[-latex,thick] (7)--(9) node[midway, xshift=0mm, yshift = 0mm, fill = white] {$(1,1)$};
        \draw[-latex,thick] (7)--(10) node[midway, xshift=0mm, yshift = 0mm, fill = white] {$(1,3)$};
        \draw[-latex,thick] (8)--(11) node[midway, xshift=0mm, yshift = 0mm, fill = white] {$(1,2)$};
        \draw[-latex,thick] (8)--(12) node[midway, xshift=0mm, yshift = 0mm, fill = white] {$(1,4)$};
    \end{tikzpicture}
    \caption{A network for a run of the Successive Shortest Path Algorithm.}
    \label{fig:sspa_cycle_ex_orig}
\end{figure} 

In each iteration of the SSPA, an excess node $k$ and a demand node $\ell$ are paired. The SSPA works correctly with any rule to choose these pairs $(k,\ell)$. Consider a run in which $(1,9)$, $(2,11)$, $(3,10)$, and $(4,12)$ are chosen in this order. This is not a strong assumption: all excess nodes and all demand nodes have the same values, and any rule based on the node labels would lead to this run by relabeling. In fact, the run also uses the globally shortest paths in each iteration, selected from {\em all} excess and demand node pairs. 

For each pair, a shortest path in the current residual network is established and flow is sent along the path to satisfy the unit excess and demand. The final residual network is shown in Figure \ref{fig:sspa_cycle_ex_res}; arcs are again labeled with capacity and cost $(u_{ij}, c_{ij})$. It contains two 0-cost cycles in opposite directions, shown in dashed and dotted lines. Flow can be sent along either of these cycles without changing the objective function value. This implies that this run of the SSPA does not terminate at a vertex of (\ref{pseudo_poly_form}). In fact, the final flow corresponds to an inner point of an edge of (\ref{MCF}), or the corresponding face of feasible flows in (\ref{pseudo_poly_form}). The fourth step in the run was a general circuit step. Thus, the SSPA may perform a general circuit walk. It also implies that there cannot exist an interpretation of the SSPA as an edge walk over any polyhedron that contains (\ref{MCF}). The example is easy to generalize such that general circuit steps are performed before the final iteration.

\begin{figure}
    \centering
    \begin{tikzpicture}[scale = 0.85]
        \tikzset{vertex/.style = {shape=circle,draw,minimum size=2em}}
        \tikzset{edge/.style = {-triangle 90,fill=black}}
        \tikzset{edgedd/.style = {dashed,-triangle 90,fill=black}}
        \node[vertex] (1) at (-4,3) {1};
        \node[vertex] (2) at (-4,1) {2};
        \node[vertex] (3) at (-4,-1) {3};
        \node[vertex] (4) at (-4,-3) {4};
        \node[vertex] (5) at (-1,2) {5};
        \node[vertex] (6) at (-1,-2) {6};
        \node[vertex] (7) at (2,2) {7};
        \node[vertex] (8) at (2,-2) {8};
        \node[vertex] (9) at (5,3) {9};
        \node[vertex] (10) at (5,1) {10};
        \node[vertex] (11) at (5,-1) {11};
        \node[vertex] (12) at (5,-3) {12};
        \draw[-latex,thick] (5)--(1) node[midway, xshift=0mm, yshift = 0mm, fill = white] {$(1,-1)$};
        \draw[-latex,thick] (5)--(2) node[midway, xshift=0mm, yshift = 0mm, fill = white] {$(1,-2)$};
        \draw[-latex,thick] (6)--(3) node[midway, xshift=0mm, yshift = 0mm, fill = white] {$(1,-3)$};
        \draw[-latex,thick] (6)--(4) node[midway, xshift=0mm, yshift = 0mm, fill = white] {$(1,-4)$};
        
        \draw[-latex,thick,dashed] (5)--(7) node[midway, xshift=0mm, yshift = 0mm, fill = white] {$(1,1)$};
        \draw[-latex,thick,dotted] (5)--(8) node[midway, xshift=-3.5mm, yshift = 6mm, fill = white] {$(1,1)$};
        \draw[-latex,thick,dashed] (6)--(8) node[midway, xshift=0mm, yshift = 0mm, fill = white] {$(1,1)$};
        \draw[-latex,thick,dotted] (6)--(7) node[midway, xshift=-4mm, yshift = -6mm, fill = white] {$(1,1)$};
        \draw[-latex,thick,dotted] (7) to [bend right = 35] node[midway, xshift=0mm, yshift = 1mm, fill = white] {$(1,-1)$} (5);
        \draw[-latex,thick,dashed] (7) to [bend right = 45] node[midway, xshift=-4mm, yshift = -7mm, fill = white] {$(1,-1)$} (6);
        \draw[-latex,thick,dashed] (8) to [bend right = 45] node[midway, xshift=4mm, yshift = -7mm, fill = white] {$(1,-1)$} (5);
        \draw[-latex,thick,dotted] (8) to [bend right = -35] node[midway, xshift=0mm, yshift = -1mm, fill = white] {$(1,-1)$} (6);
        \draw[-latex,thick] (9)--(7) node[midway, xshift=0mm, yshift = 0mm, fill = white] {$(1,-1)$};
        \draw[-latex,thick] (10)--(7) node[midway, xshift=0mm, yshift = 0mm, fill = white] {$(1,-3)$};
        \draw[-latex,thick] (11)--(8) node[midway, xshift=0mm, yshift = 0mm, fill = white] {$(1,-2)$};
        \draw[-latex,thick] (12)--(8) node[midway, xshift=0mm, yshift = 0mm, fill = white] {$(1,-4)$};
    \end{tikzpicture}
    \caption{A residual network in a run of the Successive Shortest Path Algorithm. There are two opposite 0-cost cycles shown in dashed and dotted lines.}
    \label{fig:sspa_cycle_ex_res}
\end{figure}
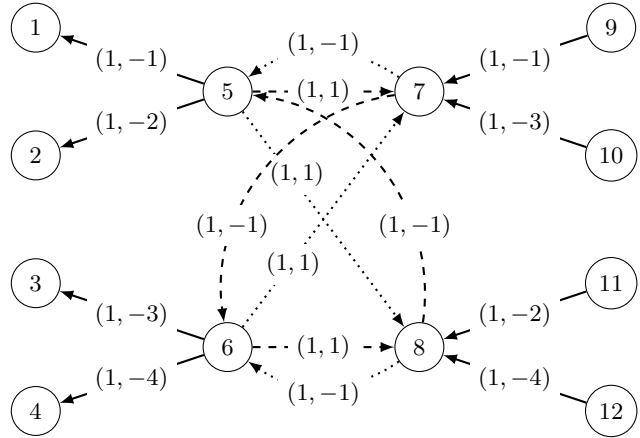

We conclude the discussion of the SSPA by showing that, for a certain objective function, a circuit augmentation scheme using Dantzig's rule replicates a run of the algorithm. It leads to a circuit walk as in Lemma \ref{lem:sspa_circ_walk}. Note that this walk lies in the face $F$ of (\ref{pseudo_poly_form}) with $s_i^+=0$ for all nodes $i\notin D$ and $s_i^-=0$ for all nodes $i\notin E$. By restricting to this face, the correspondence of a path $H_{k \ell}$ and a feasible path circuit becomes unique. For a circuit augmentation scheme, the restriction to face $F$ can be realized by explicitly describing $F$ instead of (\ref{pseudo_poly_form}), i.e., by adding the constraints forcing variables to $0$ or not introducing those variables, or by imposing a penalty term on slack variables that should remain $0$ in a scheme over the whole polyhedron (\ref{pseudo_poly_form}). We use the latter approach to describe the augmentation scheme. 

\begin{theorem}\label{thm:sspa_dantzig}
A circuit augmentation scheme over (\ref{pseudo_poly_form}) using Dantzig's rule replicates a run of the Successive Shortest Path Algorithm. 
\end{theorem}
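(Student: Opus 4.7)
My plan is to realize the penalty-based restriction the authors describe after Lemma \ref{lem:sspa_circ_walk}: work on the full polyhedron (\ref{pseudo_poly_form}) under the augmented objective
$$\tilde{\bm{c}}^T\bm{y} \ := \ \bm{c}^T\bm{x} \ + \ M\sum_{i\in N} s_i^+ \ + \ M\sum_{i\in N} s_i^-, \qquad M > n\cdot \max_{(i,j)\in A} c_{ij},$$
where $\bm{y}=(\bm{x},\bm{s^+},\bm{s^-})$. Both the SSPA and the circuit augmentation scheme start at the vertex of Lemma \ref{lem:feas_vert}, so I would proceed by induction on the iteration count with the invariant that at iteration $t$ the current iterate $\bm{y_t}$ is the same for both algorithms, lies in the face $F$ of Lemma \ref{lem:sspa_circ_walk}, and has $(s_i^-)_t > 0$ only for nodes in $E_t$ and $(s_j^+)_t > 0$ only for nodes in $D_t$.

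For the inductive step, I would tabulate $\tilde{\bm{c}}^T\bm{g}$ along each feasible circuit $\bm{g}$ at $\bm{y_t}$, using Corollary \ref{cor:final_circ_char}. Up to equivalence of orientation, the path circuits contribute $\pm\text{cost}(H)+2M$, $\pm\text{cost}(H)$, or $\pm\text{cost}(H)-2M$, the cycle circuits contribute $\pm\text{cost}(C)$, and the trivial circuits contribute $\pm 2M$. The only path-circuit form achieving a $-2M$ offset is the \emph{doubly-decreasing} form $(\bm{H_{k\ell}},-\bm{e_\ell},-\bm{e_k})$, which is feasible exactly when $k\in E_t$, $\ell\in D_t$, and a $k$-to-$\ell$ path exists in the residual network. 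Since every simple path has at most $n$ arcs and $M>n\cdot \max c_{ij}$, the $-2M$ offset strictly dominates: whenever a doubly-decreasing circuit is feasible, it attains the strict minimum of $\tilde{\bm{c}}^T\bm{g}$. The negative trivial circuit $-(\bm{0_m},\bm{e_i},\bm{e_i})$ is ruled out by the invariant, which prevents $s_i^+$ and $s_i^-$ from being simultaneously positive. Among all feasible doubly-decreasing circuits Dantzig then minimizes $\text{cost}(H_{k\ell})$ over all $(k,\ell)\in E_t\times D_t$ and residual paths between them; this is precisely a valid SSPA step, because SSPA may pair any $k\in E$ with any $\ell\in D$ and, as the authors already note, reduced-cost and original-cost shortest paths coincide between fixed endpoints.

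The step length matches automatically: the circuit walk takes the maximal feasible $\alpha = \min\{(s_k^-)_t,(s_\ell^+)_t,\min_{(i,j)\in H_{k\ell}} r_{ij}\}$, which is exactly the SSPA's $\delta$. The new iterate $\bm{y_{t+1}}$ again lies in $F$ and preserves the positivity invariant, closing the induction. Termination coincides as well: once $E_t=\emptyset$ no doubly-decreasing circuit is feasible, $\bm{x_t}$ is a feasible (and min-cost) flow, and the standard invariant that SSPA maintains non-negative reduced costs forces every residual cycle to have non-negative cost, so no cycle circuit improves $\tilde{\bm{c}}^T$ either. I expect the main obstacle to be carrying the invariant cleanly through the circuit-by-circuit case analysis — in particular, verifying that the $(\bm{0_m},-\bm{e_i},-\bm{e_i})$ trivial circuits and the ``mixed'' path-circuit orientations listed second and third in Corollary \ref{cor:final_circ_char} can never, in any iteration, tie with or beat the SSPA-type circuit under the chosen $M$.
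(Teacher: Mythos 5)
Your proposal is correct and follows essentially the same route as the paper's proof: a large penalty $M$ on both slack vectors, a categorization of circuits by their slack contribution ($\pm 2M$ or $0$), the observation that only the doubly-decreasing path circuit $(\bm{H_{k\ell}},-\bm{e_\ell},-\bm{e_k})$ achieves $-2M$ and hence is the Dantzig choice, and an induction maintaining that at most one of each pair $s_i^\pm$ is positive so that trivial circuits never become feasible. The only differences are cosmetic (your choice $M>n\cdot\max c_{ij}$ versus the paper's $M=1+\sum c_{ij}$, and your explicit termination remark), so no further work is needed.
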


\begin{proof}
We prove the claim by providing an objective function $\bm{c}=(\bm{c_x},\bm{c_{s^+}},\bm{c_{s^-}})\in \mathbb{R}^{m+2n}$ such that the choice of a Dantzig circuit $\bm{g}$ corresponds to the selection of a globally shortest path across all pairs of excess and demand nodes. Here, $\bm{c_x}\in \mathbb{R}^m$ denotes the original (non-negative) arc costs in the input of Algorithm \ref{alg:SSPA}. As we will see, it suffices to provide appropriate costs $\bm{c_{s^+}},\bm{c_{s^-}}$ for the slack variables. 

The SSPA iteratively reduces the excess and demand values in the network and terminates with a feasible flow for (\ref{MCF}), i.e., a pseudoflow of the form $(\bm{x},\bm{0_n},\bm{0_n})$. To mirror this behavior, we penalize flow on slack arcs by introducing a sufficiently large cost $M$, for example $M = 1 + \sum_{(i,j) \in A} c_{ij}$, and setting $\bm{c}=(\bm{c_x},\bm{c_{s^+}},\bm{c_{s^-}})=(\bm{c_x},\bm{M_{n}},\bm{M_{n}})$, where $\bm{M_{n}}= M\cdot \bm{1_{n}}$. 

Recall that Dantzig's rule is the choice of a feasible circuit $\bm{g}$ such that $-\bm{c^T}\bm{g}$ is maximized. We consider the values $\bm{c^T}\bm{g} = \bm{c_x^Tx} + \bm{M_{n}^Ts^+}+\bm{M_{n}^Ts^-}$ for the three types of circuits $\bm{g}=(\bm{x},\bm{s^+},\bm{s^-})$ from Corollary \ref{cor:final_circ_char}. Note that $-M < \bm{c_x^Tg_x} < M$ for any circuit $g$, where $\bm{g}=(\bm{g_x},\bm{g_{s^+}},\bm{g_{s^-}})$ and $\bm{g_x}$ are the indices of the circuit $g$ that corresponds to $x$-arcs. This allows a categorization of the circuits by the `slack contribution' $s_g=\bm{M_{n}^Ts^+}+\bm{M_{n}^Ts^-}$: trivial circuits have $s_g=2M$ or $s_g=-2M$; cycle circuits have $s_g = 0$; path circuits have $s_g=-2M$, $s_g=0$, or $s_g=2M$. If there exists a circuit with slack contribution $-2M$ -- which corresponds to reducing flow along two slack arcs -- a maximum value of $-\bm{c^Tg}$ can only be achieved for such a circuit. This can only be a path circuit or a trivial circuit. 

The circuit augmentation scheme starts at the vertex $\bm{y_0}=(\bm{x_0},\bm{s^+},\bm{s^-})$ with $\bm{x_0}=\bm{0_m}$ and at least one of each pair $s_i^\pm$ equal to zero; see Lemma \ref{lem:feas_vert}. Because of the latter, trivial circuits are not feasible at $\bm{y_0}$. For any pair of excess node $k$ and deficit node $\ell$, there exists a unique path circuit $\bm{g}=(\bm{H_{k \ell}},-\bm{e_\ell},-\bm{e_k})$ that reduces flow along two slack arcs. Dantzig's rule chooses a best path circuit $\bm{g}$ among these and performs a maximal circuit step to $\bm{y_1}=\bm{y_0} + \delta \bm{g}$. 

As in the proof of Lemma \ref{lem:sspa_circ_walk}, $\bm{y_1}$ has at least one of each pair $s_i^\pm$ equal to zero and a non-zero slack variable corresponds to the remaining excess or deficit at the node. This implies that the above arguments can be repeated for $\bm{y_{i+1}}$ in place of $\bm{y_i}$. Thus trivial circuits are not feasible while there remains an excess (and a deficit) node since a path circuit that reduces flow along two non-zero slack arcs will be chosen. This implies the new $\bm{y_{i+2}}$ satisfies the same properties again.
\end{proof}

\subsection{Generic Augmenting Path}\label{sec:GAPA}

The Generic Augmenting Path Algorithm (GAPA) is a classical algorithm for solving max flow problems. One aims to send as much flow as possible from a source node $s$ to a sink node $t$; see Algorithm \ref{GAP} for a description in pseudocode \cite{o-97}. The algorithm begins by finding an augmenting $s-t$ path $H$, i.e., a path from $s$ to $t$ (Line 5) in the current residual network, and sends a maximal flow of $\delta$ along the path $H$ (Lines 6-7). Then the algorithm updates the residual network (Line 8) and the process is repeated until no more $s-t$ paths exist. 

It is well-known that max flow problems are a special case of min-cost flow problems and, as we will see, its iterations are readily interpreted as augmentations on a current pseudoflow. Max flow problems can be formulated as circulation problems, i.e., min-cost flow problems where $b_i=0$ for all $i \in N$, with the addition of the variable $x_{ts}$, which corresponds to flow along an imaginary arc from $t$ to $s$ in the original network. The flow on this arc represents the total flow that reaches $t$ from $s$ in the original network.

A pseudoflow network can be seen as a more general construction that captures this reformulation as a special case. We mimic the behavior in the pseudoflow network through the pair of slack arcs for $s^-_{t}$ and $s^+_{s}$ taking the place of $x_{ts}$.

Due to this formulation, all intermediate and the final solution of a run of the GAPA are of the form $(\bm{x},\alpha \bm{e_s}, \alpha \bm{e_t})$, where $\alpha$ is the sum total of flow which has currently reached $t$. Further, the starting vertex for the GAPA is a special case of the vertex described in Lemma \ref{lem:feas_vert}, a zero flow $\bm{0_{m+2n}}$. In a circulation problem, all supply and demand values for nodes are zero, so there is no initial flow that needs to be placed on the slack arcs. While the starting vertex and form of solutions differ from the SSPA, several arguments in Section \ref{sec:sspa} carry over and we are able to follow the same motions, with a number of differences in technical details. 

\begin{algorithm}
       \caption{Generic Augmenting Path Algorithm}\label{GAP}
\begin{algorithmic}[1] 
       \STATE \mbox{\textbf{Input:} a network $G(N,A)$ with non-negative capacities $u_{ij}$, source node $s$, sink node $t$}
       \STATE \mbox{\textbf{Begin:}}
       \STATE Set arcs $\bm{x} = \bm{0_m}$
       \WHILE{there exists a path from $s$ to $t$}
            \STATE Determine an augmenting path, $H_{st}$, from $s$ to $t$
            \STATE $\delta =\min\{r_{ij}: (i,j) \in H_{st}\}$
            \STATE Augment $\delta$ units of flow along path $H_{st}$
            \STATE Update $\bm{x}$, $G(N,A)$
       \ENDWHILE
       \STATE \RETURN{} max flow $\bm{x}$
\end{algorithmic}
\end{algorithm}

We begin by proving that the GAPA performs a circuit walk over a face of (\ref{pseudo_poly_form}) that restricts to pseudoflows of the form $(\bm{x},\alpha \bm{e_s}, \alpha \bm{e_t})$. In particular, we show that a circuit step is taken in each iteration beginning at the vertex $\bm{0_{m+2n}}$. 

\begin{lemma}\label{lem:gapa_circ_walk}
    The Generic Augmenting Path Algorithm performs a circuit walk over the face, $F$, of (\ref{pseudo_poly_form}) where $s_i^\pm =  0$ for all $i \in N \setminus\{s,t\}$ and $s^-_s = s^+_t=0$. (Only $s^+_s$ and $s^-_t$ are allowed to be non-zero.) 
\end{lemma}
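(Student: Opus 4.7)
The plan is to follow the template of Lemma \ref{lem:sspa_circ_walk}, with adjustments for the max flow / circulation setting. First I would note that in the circulation reformulation used by the GAPA all supply and demand values are $b_i = 0$, so the unique vertex with $\bm{x} = \bm{0_m}$ guaranteed by Lemma \ref{lem:feas_vert} is $\bm{y_0} = \bm{0_{m+2n}}$; this trivially satisfies every defining equality of $F$, so the walk starts at a vertex of (\ref{pseudo_poly_form}) that lies in $F$.

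For a generic iteration I would then identify which single circuit reproduces the update in Lines 5-8. The augmenting $s$-$t$ path $H_{st}$ in the residual network corresponds to an undirected path in the original network, with $\pm 1$ entries of $\bm{H_{st}}$ encoding forward- and backward-residual arcs. Of the four path-circuit orientations on $H_{st}$ listed in Corollary \ref{cor:final_circ_char}, the only one whose nonzero slack entries are confined to $s_s^+$ and $s_t^-$ is $\bm{g} = (\bm{H_{st}}, \bm{e_s}, \bm{e_t})$; each of the other three would activate a slack variable that is forced to zero on $F$. Adding $\delta \bm{g}$ to the current iterate precisely reproduces the algorithmic update: flow changes by $+\delta$ on forward and $-\delta$ on backward arcs of $H_{st}$, while $s_s^+$ and $s_t^-$ each grow by $\delta$. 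The resulting point still has the form $(\bm{x}, \alpha \bm{e_s}, \alpha \bm{e_t}) \in F$, which allows the argument to be iterated.

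Finally, I would check maximality and close by induction. For every arc of $H_{st}$ the constraint that becomes tight after a $\delta$-step is exactly the capacity constraint whose slack in (\ref{pseudo_poly_form}) equals the residual capacity $r_{ij}$ used in Line 6 (an upper bound for forward-residual arcs, the lower bound $x_{ij} \geq 0$ for backward-residual arcs); since $s_s^+$ and $s_t^-$ are unbounded above, no other binding constraint can interfere. Hence $\delta = \min\{r_{ij} : (i,j) \in H_{st}\}$ is exactly the maximal feasible step length along $\bm{g}$, matching condition (3) of Definition \ref{def:circuitwalk}. The induction step merely repeats the argument at the new iterate, using the updated residual network. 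The only delicate point, and where I would focus the write-up, is the uniqueness of the circuit selection: Corollary \ref{cor:final_circ_char} offers four oriented path-circuits per undirected path, and it is the face constraints defining $F$ that single out $\bm{g} = (\bm{H_{st}}, \bm{e_s}, \bm{e_t})$ as the sole feasible choice at every iterate.
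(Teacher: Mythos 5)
Your proposal is correct and follows essentially the same route as the paper: start at the zero vertex $\bm{y_0}=\bm{0_{m+2n}}$ via Lemma \ref{lem:feas_vert} with $b_i=0$, identify $(\bm{H_{st}},\bm{e_s},\bm{e_t})$ as the unique path circuit compatible with $F$ among the orientations of Corollary \ref{cor:final_circ_char}, argue maximality of the step from the bottleneck residual capacity, and iterate. The only cosmetic difference is that you exclude the other three orientations via the face constraints of $F$ while the paper excludes them via infeasibility (slacks dropping below zero); both justifications are valid here since all slacks other than $s_s^+,s_t^-$ stay at zero throughout the run.
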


\begin{proof}
The GAPA begins by setting all $x$-arcs equal to zero, $\bm{x_0} = \bm{0_m}$ (Line 3). With $b_i=0$ for all $i\in N$, by Lemma \ref{lem:feas_vert}, the unique vertex of (\ref{pseudo_poly_form}) corresponding to a zero pseudoflow is $\bm{y_0} = (\bm{x_0}, \bm{s^+}, \bm{s^-}) = \bm{0_{m+2n}}$. Note that $\bm{y_0}$ lies in the specified face $F$.

In each iteration, augmentation along an $s-t$ path $H_{st}$ in the current residual network is performed (Lines 5-7). The iterations of the GAPA mirror the iterations of the SSPA where $s$ and $t$ would always be chosen as the pair of supply and demand nodes $k$ and $\ell$ in the SSPA, which allows us to argue similarly to the proof of Lemma \ref{lem:sspa_circ_walk}.
 
At $\bm{y_0}$, only the path circuit $\bm{g}=(\bm{H^1_{st}},\bm{e_s},\bm{e_t})$ is feasible (all others would have to decrease a slack variable below $0$) and one has $\bm{y_1}=\bm{y_0} + \delta \bm{g}=(\bm{x}_1, \delta \bm{e_s},\delta \bm{e_s})\in F$ where $\bm{x_1}$ corresponds to the updated flow values for arcs along the path $H^1_{st}$. By choice of $\delta$ as the bottleneck of residual capacities (Line 6), a new variable becomes zero or equal to its upper bound in $\bm{x_1}$. Thus, the step from $\bm{y_0}$ to $\bm{y_1}$ is of maximal length.

Due to $\bm{g}$ only increasing flow along slack arcs $s^+_s$ and $s^-_t$, all slack arcs aside from that pair remain zero. Therefore, at $\bm{y_1}$ again only a path circuit of the form $(\bm{H^2_{st}},\bm{e_s},\bm{e_t})$ is feasible for the next augmenting $s-t$ path $H^2_{st}$; a step of maximal length is performed. The same arguments can be repeated for subsequent steps, which proves the claim. 
\end{proof}

Let us provide an example for which the GAPA performs a general (non-edge) circuit walk.  
Consider the network in Figure \ref{fig:gapa_cycle_ex_orig}; the arcs are labeled with capacity values $u_{ij}$. The network is constructed from the one in Figure \ref{fig:sspa_cycle_ex_orig}: new nodes $s$ and $t$ are added and connected to excess and demand nodes through new unit-capacity arcs, respectively. Further, any arc incident to excess or demand nodes is replaced by a path of unit-capacity arcs with a length equal to the respective cost.

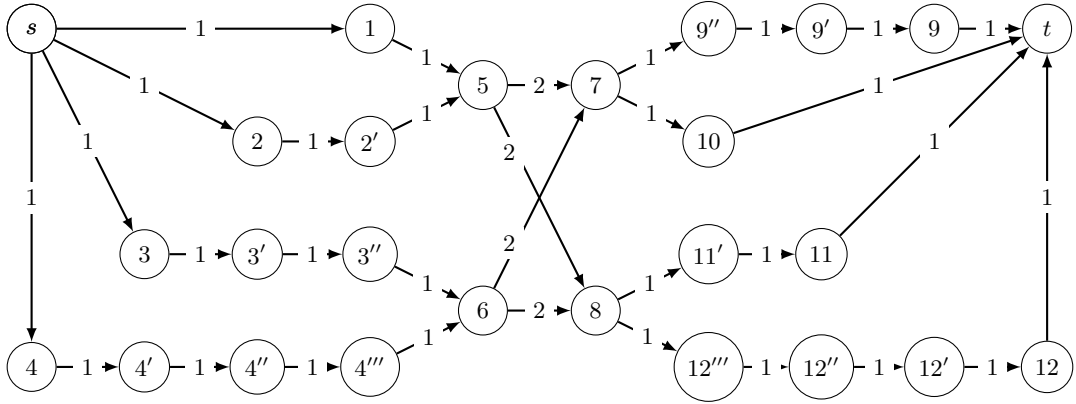
\begin{figure}
    \centering
    \begin{tikzpicture}[scale = 0.75]
        \tikzset{vertex/.style = {shape=circle,draw,minimum size=2em}}
        \tikzset{edge/.style = {-triangle 90,fill=black}}
        \tikzset{edgedd/.style = {dashed,-triangle 90,fill=black}}
        \node[vertex] (s) at (-9,3) {$s$};
        \node[vertex] (s) at (-9,3) {$s$};
        \node[vertex] (t) at (9,3) {$t$};
        \node[vertex] (1) at (-3,3) {1};
        \node[vertex] (2) at (-5,1) {2};
        \node[vertex] (2?) at (-3,1) {$2'$};
        \node[vertex] (3) at (-7,-1) {3};
        \node[vertex] (3?) at (-5,-1) {$3'$};
        \node[vertex] (3??) at (-3,-1) {$3''$};
        \node[vertex] (4) at (-9,-3) {4};
        \node[vertex] (4?) at (-7,-3) {$4'$};
        \node[vertex] (4??) at (-5,-3) {$4''$};
        \node[vertex] (4???) at (-3,-3) {$4'''$};
        \node[vertex] (5) at (-1,2) {5};
        \node[vertex] (6) at (-1,-2) {6};
        \node[vertex] (7) at (1,2) {7};
        \node[vertex] (8) at (1,-2) {8};
        \node[vertex] (9) at (7,3) {9};
        \node[vertex] (9?) at (5,3) {$9'$};
        \node[vertex] (9??) at (3,3) {$9''$};
        \node[vertex] (10) at (3,1) {10};
        \node[vertex] (11) at (5,-1) {11};
        \node[vertex] (11?) at (3,-1) {$11'$};
        \node[vertex] (12) at (9,-3) {12};
        \node[vertex] (12?) at (7,-3) {$12'$};
        \node[vertex] (12??) at (5,-3) {$12''$};
        \node[vertex] (12???) at (3,-3) {$12'''$};
        \draw[-latex,thick] (s)--(1) node[midway, xshift=0mm, yshift = 0mm, fill = white] {$1$};
        \draw[-latex,thick] (s)--(2) node[midway, xshift=0mm, yshift = 0mm, fill = white] {$1$};
        \draw[-latex,thick] (s)--(3) node[midway, xshift=0mm, yshift = 0mm, fill = white] {$1$};
        \draw[-latex,thick] (s)--(4) node[midway, xshift=0mm, yshift = 0mm, fill = white] {$1$};
        
        \draw[-latex,thick] (1)--(5) node[midway, xshift=0mm, yshift = 0mm, fill = white] {$1$};
        
        \draw[-latex,thick] (2)--(2?) node[midway, xshift=0mm, yshift = 0mm, fill = white] {$1$};
        \draw[-latex,thick] (2?)--(5) node[midway, xshift=0mm, yshift = 0mm, fill = white] {$1$};
        
        \draw[-latex,thick] (3)--(3?) node[midway, xshift=0mm, yshift = 0mm, fill = white] {$1$};
        \draw[-latex,thick] (3?)--(3??) node[midway, xshift=0mm, yshift = 0mm, fill = white] {$1$};
        \draw[-latex,thick] (3??)--(6) node[midway, xshift=0mm, yshift = 0mm, fill = white] {$1$};
        
        \draw[-latex,thick] (4)--(4?) node[midway, xshift=0mm, yshift = 0mm, fill = white] {$1$};
        \draw[-latex,thick] (4?)--(4??) node[midway, xshift=0mm, yshift = 0mm, fill = white] {$1$};
        \draw[-latex,thick] (4??)--(4???) node[midway, xshift=0mm, yshift = 0mm, fill = white] {$1$};
        \draw[-latex,thick] (4???)--(6) node[midway, xshift=0mm, yshift = 0mm, fill = white] {$1$};
        
        \draw[-latex,thick] (5)--(7) node[midway, xshift=0mm, yshift = 0mm, fill = white] {$2$};
        \draw[-latex,thick] (6)--(7) node[midway, xshift=-4mm, yshift = -6mm, fill = white] {$2$};
        \draw[-latex,thick] (5)--(8) node[midway, xshift=-4mm, yshift = 6mm, fill = white] {$2$};
        \draw[-latex,thick] (6)--(8) node[midway, xshift=0mm, yshift = 0mm, fill = white] {$2$};
        
        \draw[-latex,thick] (7)--(9??) node[midway, xshift=0mm, yshift = 0mm, fill = white] {$1$};
        \draw[-latex,thick] (9??)--(9?) node[midway, xshift=0mm, yshift = 0mm, fill = white] {$1$};
        \draw[-latex,thick] (9?)--(9) node[midway, xshift=0mm, yshift = 0mm, fill = white] {$1$};
        
        \draw[-latex,thick] (7)--(10) node[midway, xshift=0mm, yshift = 0mm, fill = white] {$1$};
        
        \draw[-latex,thick] (8)--(11?) node[midway, xshift=0mm, yshift = 0mm, fill = white] {$1$};
        \draw[-latex,thick] (11?)--(11) node[midway, xshift=0mm, yshift = 0mm, fill = white] {$1$};
        
        \draw[-latex,thick] (8)--(12???) node[midway, xshift=0mm, yshift = 0mm, fill = white] {$1$};
        \draw[-latex,thick] (12???)--(12??) node[midway, xshift=0mm, yshift = 0mm, fill = white] {$1$};
        \draw[-latex,thick] (12??)--(12?) node[midway, xshift=0mm, yshift = 0mm, fill = white] {$1$};
        \draw[-latex,thick] (12?)--(12) node[midway, xshift=0mm, yshift = 0mm, fill = white] {$1$};

        \draw[-latex,thick] (9)--(t) node[midway, xshift=0mm, yshift = 0mm, fill = white] {$1$};
        \draw[-latex,thick] (10)--(t) node[midway, xshift=0mm, yshift = 0mm, fill = white] {$1$};
        \draw[-latex,thick] (11)--(t) node[midway, xshift=0mm, yshift = 0mm, fill = white] {$1$};
        \draw[-latex,thick] (12)--(t) node[midway, xshift=0mm, yshift = 0mm, fill = white] {$1$};
    \end{tikzpicture}
    \caption{A network for a run of the Generic Augmenting Path Algorithm.}
    \label{fig:gapa_cycle_ex_orig}
\end{figure}

\begin{figure}
    \centering
    \begin{tikzpicture}[scale = 0.75]
        \tikzset{vertex/.style = {shape=circle,draw,minimum size=2em}}
        \tikzset{edge/.style = {-triangle 90,fill=black}}
        \tikzset{edgedd/.style = {dashed,-triangle 90,fill=black}}
        \node[vertex] (s) at (-9,3) {$s$};
        \node[vertex] (s) at (-9,3) {$s$};
        \node[vertex] (t) at (9,3) {$t$};
        \node[vertex] (1) at (-3,3) {1};
        \node[vertex] (2) at (-5,1) {2};
        \node[vertex] (2?) at (-3,1) {$2'$};
        \node[vertex] (3) at (-7,-1) {3};
        \node[vertex] (3?) at (-5,-1) {$3'$};
        \node[vertex] (3??) at (-3,-1) {$3''$};
        \node[vertex] (4) at (-9,-3) {4};
        \node[vertex] (4?) at (-7,-3) {$4'$};
        \node[vertex] (4??) at (-5,-3) {$4''$};
        \node[vertex] (4???) at (-3,-3) {$4'''$};
        \node[vertex] (5) at (-1,2) {5};
        \node[vertex] (6) at (-1,-2) {6};
        \node[vertex] (7) at (1,2) {7};
        \node[vertex] (8) at (1,-2) {8};
        \node[vertex] (9) at (7,3) {9};
        \node[vertex] (9?) at (5,3) {$9'$};
        \node[vertex] (9??) at (3,3) {$9''$};
        \node[vertex] (10) at (3,1) {10};
        \node[vertex] (11) at (5,-1) {11};
        \node[vertex] (11?) at (3,-1) {$11'$};
        \node[vertex] (12) at (9,-3) {12};
        \node[vertex] (12?) at (7,-3) {$12'$};
        \node[vertex] (12??) at (5,-3) {$12''$};
        \node[vertex] (12???) at (3,-3) {$12'''$};
        
        \draw[-latex,thick] (1)--(s) node[midway, xshift=0mm, yshift = 0mm, fill = white] {$1$};
        \draw[-latex,thick] (2)--(s) node[midway, xshift=0mm, yshift = 0mm, fill = white] {$1$};
        \draw[-latex,thick] (3)--(s) node[midway, xshift=0mm, yshift = 0mm, fill = white] {$1$};
        \draw[-latex,thick] (4)--(s) node[midway, xshift=0mm, yshift = 0mm, fill = white] {$1$};
        
        \draw[-latex,thick] (5)--(1) node[midway, xshift=0mm, yshift = 0mm, fill = white] {$1$};
        
        \draw[-latex,thick] (2?)--(2) node[midway, xshift=0mm, yshift = 0mm, fill = white] {$1$};
        \draw[-latex,thick] (5)--(2?) node[midway, xshift=0mm, yshift = 0mm, fill = white] {$1$};
        
        \draw[-latex,thick] (3?)--(3) node[midway, xshift=0mm, yshift = 0mm, fill = white] {$1$};
        \draw[-latex,thick] (3??)--(3?) node[midway, xshift=0mm, yshift = 0mm, fill = white] {$1$};
        \draw[-latex,thick] (6)--(3??) node[midway, xshift=0mm, yshift = 0mm, fill = white] {$1$};
        
        \draw[-latex,thick] (4?)--(4) node[midway, xshift=0mm, yshift = 0mm, fill = white] {$1$};
        \draw[-latex,thick] (4??)--(4?) node[midway, xshift=0mm, yshift = 0mm, fill = white] {$1$};
        \draw[-latex,thick] (4???)--(4??) node[midway, xshift=0mm, yshift = 0mm, fill = white] {$1$};
        \draw[-latex,thick] (6)--(4???) node[midway, xshift=0mm, yshift = 0mm, fill = white] {$1$};
        
        \draw[-latex,thick,dashed] (5)--(7) node[midway, xshift=0mm, yshift = 0mm, fill = white] {$1$};
        \draw[-latex,thick,dotted] (6)--(7) node[midway, xshift=-4mm, yshift = -6mm, fill = white] {$1$};
        \draw[-latex,thick,dotted] (5)--(8) node[midway, xshift=-4mm, yshift = 6mm, fill = white] {$1$};
        \draw[-latex,thick,dashed] (6)--(8) node[midway, xshift=0mm, yshift = 0mm, fill = white] {$1$};
        
        \draw[-latex,thick] (9??)--(7) node[midway, xshift=0mm, yshift = 0mm, fill = white] {$1$};
        \draw[-latex,thick] (9?)--(9??) node[midway, xshift=0mm, yshift = 0mm, fill = white] {$1$};
        \draw[-latex,thick] (9)--(9?) node[midway, xshift=0mm, yshift = 0mm, fill = white] {$1$};
        
        \draw[-latex,thick] (10)--(7) node[midway, xshift=0mm, yshift = 0mm, fill = white] {$1$};
        
        \draw[-latex,thick] (11?)--(8) node[midway, xshift=0mm, yshift = 0mm, fill = white] {$1$};
        \draw[-latex,thick] (11)--(11?) node[midway, xshift=0mm, yshift = 0mm, fill = white] {$1$};
        
        \draw[-latex,thick] (12???)--(8) node[midway, xshift=0mm, yshift = 0mm, fill = white] {$1$};
        \draw[-latex,thick] (12??)--(12???) node[midway, xshift=0mm, yshift = 0mm, fill = white] {$1$};
        \draw[-latex,thick] (12?)--(12??) node[midway, xshift=0mm, yshift = 0mm, fill = white] {$1$};
        \draw[-latex,thick] (12)--(12?) node[midway, xshift=0mm, yshift = 0mm, fill = white] {$1$};

        \draw[-latex,thick] (t)--(9) node[midway, xshift=0mm, yshift = 0mm, fill = white] {$1$};
        \draw[-latex,thick] (t)--(10) node[midway, xshift=0mm, yshift = 0mm, fill = white] {$1$};
        \draw[-latex,thick] (t)--(11) node[midway, xshift=0mm, yshift = 0mm, fill = white] {$1$};
        \draw[-latex,thick] (t)--(12) node[midway, xshift=0mm, yshift = 0mm, fill = white] {$1$};

        \draw[-latex,thick,dotted] (7) to [bend right = 35] node[midway, xshift=0mm, yshift = 1mm, fill = white] {$1$} (5);
        \draw[-latex,thick,dotted] (8) to [bend right = -35] node[midway, xshift=0mm, yshift = -1mm, fill = white] {$1$} (6);
        \draw[-latex,thick,dashed] (7) to [bend right = 45] node[midway, xshift=0mm, yshift = 1mm, fill = white] {$1$} (6);
        \draw[-latex,thick,dashed] (8) to [bend right = 45] node[midway, xshift=0mm, yshift = 1mm, fill = white] {$1$} (5);
    \end{tikzpicture}
    \caption{A residual network in a run of the Generic Augmenting Path Algorithm. There are two opposite 0-costs cycles shown in dashed and dotted lines.}
    \label{fig:gapa_cycle_ex_res}
\end{figure}
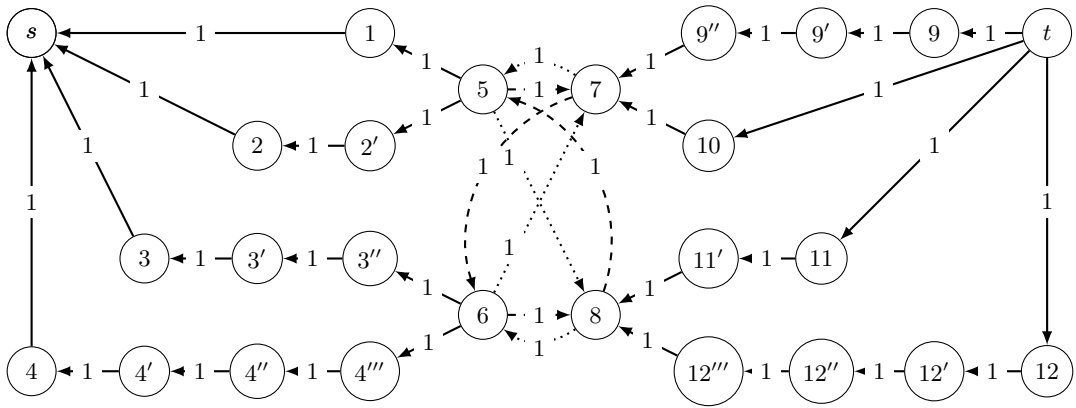

The design of the network leads to iterations that are in one-to-one correspondence with those in the previous SSPA example. In each iteration, an $s-t$ path is found and used for augmentation. If shortest paths are used, a run would first choose the $s-t$ path through the pair of nodes $1$ and $10$, denoted (1,10). The subsequent pairings would be (2,11), (3,9), and (4,12). The capacity of the augmenting paths is always one and the run terminates after four augmentations.

Such a run creates two 0-cost cycles in opposite directions, shown in dashed and dotted lines in Figure \ref{fig:gapa_cycle_ex_res}. In fact, these cycles correspond to the ones formed in the SSPA example, and the same arguments suffice to see that the GAPA may terminate at a non-vertex of (\ref{pseudo_poly_form}). Thus, the GAPA may perform a general circuit walk and there cannot exist an interpretation of the GAPA as an edge walk over any polyhedron that contains (\ref{MCF}).

Next, we describe combinations of objective functions and pivot rules to obtain circuit augmentation schemes over  (\ref{pseudo_poly_form}) that replicate a run of the GAPA and the Shortest Augmenting Path Algorithm (SAPA), the special variant named for the use of shortest $s-t$ paths. As before, we use a penalty term on slack variables that should remain $0$ in a scheme defined over the whole polyhedron. The resulting walk lies in the face specified in Lemma \ref{lem:gapa_circ_walk}.

We begin by exhibiting an objective function for which any pivot rule replicates a run of the GAPA. It builds on the standard way to represent a max flow objective in a reformulation as a circulation problem, adjusted to the specific form of (\ref{pseudo_poly_form}). 

\begin{theorem}\label{thm:gap_aug}
A circuit augmentation scheme over (\ref{pseudo_poly_form}) using any pivot rule replicates a run of the Generic Augmenting Path Algorithm.
\end{theorem}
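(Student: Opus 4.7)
The plan is to select an objective function $\bm{c} \in \mathbb{R}^{m+2n}$ under which the feasible improving circuits at every iterate $\bm{y_k} \in F$ (with $F$ as in Lemma \ref{lem:gapa_circ_walk}) are exactly the circuits $(\bm{H_{st}}, \bm{e_s}, \bm{e_t})$ where $H_{st}$ is an augmenting $s-t$ path in the current residual network. Any pivot rule then chooses such a circuit, the maximal step length equals the bottleneck $\delta$ in Line 6 of Algorithm \ref{GAP}, and the resulting circuit walk coincides with a run of the GAPA.

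Concretely, I would set $\bm{c_x} = \bm{0_m}$, $c_{s_s^+} = c_{s_t^-} = 1$, and $c_{s_i^+} = -M$ for every $i \neq s$, $c_{s_i^-} = -M$ for every $i \neq t$, where $M > 2$. Since $\bm{b} = \bm{0_n}$ in the circulation reformulation of max flow, Lemma \ref{lem:feas_vert} identifies the starting vertex as $\bm{y_0} = \bm{0_{m+2n}}$, which lies in $F$.

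The core step is a case analysis over the circuit types of Corollary \ref{cor:final_circ_char}. At any $\bm{y_k} \in F$, the slack variables $s_i^+$ for $i \neq s$ and $s_i^-$ for $i \neq t$ are all zero, so feasibility forces the corresponding circuit components to be non-negative. Since circuit entries lie in $\{-1,0,1\}$ by total unimodularity, the penalty $M > 2$ rules out $\bm{c^T g} > 0$ whenever any of these components is strictly positive. Inspecting Corollary \ref{cor:final_circ_char}, the circuits whose slack support is confined to $\{s_s^+, s_t^-\}$ are only the cycle circuits $\pm(\bm{C}, \bm{0_n}, \bm{0_n})$ (with $\bm{c^T g} = 0$), the negative path circuit $-(\bm{H_{st}}, \bm{e_s}, \bm{e_t})$ (with $\bm{c^T g} = -2$), and $+(\bm{H_{st}}, \bm{e_s}, \bm{e_t})$ (with $\bm{c^T g} = 2 > 0$). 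Only the last is strictly improving, and it is feasible at $\bm{y_k}$ precisely when $H_{st}$ is an augmenting $s-t$ path in the residual network.

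A maximal step along $+(\bm{H_{st}}, \bm{e_s}, \bm{e_t})$ is bounded only by arc capacities on $H_{st}$, because the slacks $s_s^+$ and $s_t^-$ are only being increased; the bound equals $\min\{r_{ij} : (i,j) \in H_{st}\}$, matching $\delta$ in Algorithm \ref{GAP}. The new iterate $\bm{y_{k+1}}$ again lies in $F$, so the argument recurses, and termination coincides with the disappearance of any augmenting $s-t$ path, as in the GAPA. The main obstacle is the type-by-type verification that every other path-circuit variant of Corollary \ref{cor:final_circ_char}, together with the trivial circuits, either violates the required slack-support pattern or coincides with $\pm(\bm{H_{st}}, \bm{e_s}, \bm{e_t})$ up to reorientation.
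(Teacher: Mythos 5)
Your proposal is correct and follows essentially the same route as the paper: the same objective function (zero cost on $x$-arcs, reward $+1$ on $s^+_s$ and $s^-_t$, large penalty $-M$ on all other slack arcs), the same observation that the penalized slacks stay at zero on the face $F$ so the only improving feasible circuits are the $s$--$t$ path circuits $(\bm{H_{st}},\bm{e_s},\bm{e_t})$ of cost $2$, and the same inductive recursion over iterates. The case analysis you flag as the remaining obstacle is exactly what the paper's proof does (implicitly, via Lemma \ref{lem:gapa_circ_walk}), and your outline already contains all the ingredients to complete it.
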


\begin{proof}
We prove the claim by providing an objective function $\bm{c}=(\bm{0_m}, \bm{c_{s^+}}, \bm{c_{s^-}}) \in \R^{m+2n}$ such that any pivot rule would select an augmenting $s-t$ path in each iteration.  
To reproduce this behavior, we reward circuits which use the arcs $s^+_s$ and $s^-_t$ and penalize the use of other slack arcs. Further, we set the costs of the $x$-arcs in the original network to zero, which implies that all cycle circuits have cost $0$.

Placing a cost of $1$ on arcs $s^+_s$ and $s^-_t$ is sufficient such that any $s-t$ path circuit $H_{st}$ using those slack arcs is of improving cost $2$. The other slack arcs are penalized by a sufficiently large negative cost $-M$; for example $M>3$. Using $\bm{M_{n}}= M\cdot \bm{1_{n}}$, one can write the cost function as $\bm{c} = (\bm{0_m}, \bm{-M_n}+(1+M)\bm{e}_s, \bm{-M_n}+(1+M)\bm{e}_t)$.

By design of $\bm{c}$, trivial circuits or path circuits (other than $H_{st}$) are of improving cost only if flow is reduced along penalized slack arcs. However, such circuits do not become feasible throughout the run; recall the proof of Lemma \ref{lem:gapa_circ_walk}. As the circuit augmentation scheme starts at the vertex $\bm{y_0}=(\bm{x_0}, \bm{s^+}, \bm{s^-}) = \bm{0_{m+2n}}$, in the first iteration the only improving, feasible circuits are $s-t$ path circuits with slack arcs $s^+_s$ and $s^-_t$. All other slack arcs remain zero. Thus, the same argument holds for later iterations, too. The scheme continues to find improving $s-t$ path circuits of cost $2$ as long as there remain augmenting $s-t$ paths in the original network. This proves the claim. 
\end{proof}

For the objective function used in this proof, all improving feasible circuits are $s-t$ path circuits of cost $2$. This allows the selection of a shortest path through a steepest-ascent rule: the cost $2$ of the circuit $\bm{g}$ is divided by the number $||B\bm{g}||_1$ of arcs of $\bm{g}$, which leads to a maximal improvement $\bm{c}^T\bm{g}/||B\bm{g}||_1$ for a shortest path. We obtain the following corollary for the SAPA.

\begin{corollary}\label{cor:sapa_saa}
A circuit augmentation scheme over (\ref{pseudo_poly_form}) using the steepest-ascent pivot rule replicates a run of the Shortest Augmenting Path Algorithm.      
\end{corollary}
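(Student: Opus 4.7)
My plan is to build directly on the objective function $\bm{c} = (\bm{0_m}, \bm{-M_n}+(1+M)\bm{e}_s, \bm{-M_n}+(1+M)\bm{e}_t)$ constructed in the proof of Theorem \ref{thm:gap_aug}. Reusing that objective, the argument there already establishes that, starting from $\bm{y_0} = \bm{0_{m+2n}}$, the only improving feasible circuits encountered in every iteration of the circuit augmentation scheme are the $s\text{--}t$ path circuits of the form $\bm{g} = (\bm{H_{st}}, \bm{e_s}, \bm{e_t})$, each of which has improvement $\bm{c}^T \bm{g} = 2$ regardless of the path length. So the only thing left to show is that steepest-ascent picks a shortest such path.

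Next I would compute $\|B\bm{g}\|_1$ for a circuit $\bm{g} = (\bm{H_{st}}, \bm{e_s}, \bm{e_t})$ whose path contains $k$ arcs. The inequality block $B$ of (\ref{pseudo_poly_form}) consists of the two-sided bounds $\bm{0_m} \leq \bm{x} \leq \bm{u}$ together with the one-sided bounds $\bm{s^\pm} \geq \bm{0_n}$. Each arc $(i,j)$ used by $\bm{H_{st}}$ has entry $\pm 1$ appearing in both its lower and upper bound rows, contributing $2$ to $\|B\bm{g}\|_1$, while each of the two active slack variables $s^+_s$ and $s^-_t$ contributes $1$. Thus
\begin{equation*}
\|B\bm{g}\|_1 \;=\; 2k + 2,
\end{equation*}
and the steepest-ascent ratio becomes
\begin{equation*}
\frac{\bm{c}^T\bm{g}}{\|B\bm{g}\|_1} \;=\; \frac{2}{2k+2} \;=\; \frac{1}{k+1}.
\end{equation*}

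Maximizing this ratio over the family of feasible improving circuits is therefore equivalent to minimizing $k$, i.e., selecting a path circuit whose underlying $s\text{--}t$ path has the fewest arcs. That is exactly the selection rule used by the SAPA in each iteration. Combining this with Theorem \ref{thm:gap_aug} (which guarantees both feasibility and non-appearance of other circuit types throughout the run), the steepest-ascent step at $\bm{y_i}$ is a shortest-path augmentation in the current residual network, so the walks produced coincide step-for-step.

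The main conceptual obstacle I would need to be careful about is ensuring that the characterization of feasible improving circuits from Theorem \ref{thm:gap_aug} continues to apply after each steepest-ascent step, since the argument there traversed the iterates inductively and relied on only $s^+_s$ and $s^-_t$ being nonzero among the slacks. Because steepest-ascent likewise only ever chooses $s\text{--}t$ path circuits of the form $(\bm{H_{st}}, \bm{e_s}, \bm{e_t})$, the invariant that all other slack variables remain zero is preserved, and the inductive argument transfers verbatim. No additional technicalities arise from the normalization, since the $\|B\bm{g}\|_1$ count above depends only on the support size of the path, not on the specific iterate.
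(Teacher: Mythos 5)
Your proposal is correct and follows essentially the same route as the paper, which also reuses the objective from Theorem \ref{thm:gap_aug}, observes that all improving feasible circuits are $s$--$t$ path circuits of gain $2$, and lets the steepest-ascent normalization $\bm{c}^T\bm{g}/\|B\bm{g}\|_1$ select a shortest path. Your explicit count $\|B\bm{g}\|_1 = 2k+2$ (from the two-sided bounds on the $x$-variables plus the two slack bounds) is in fact slightly more careful than the paper's informal description of $\|B\bm{g}\|_1$ as ``the number of arcs,'' but since both are strictly increasing in $k$ the conclusion is identical.
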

 
Steepest-ascent circuit augmentation is known to have an efficient implementation \cite{bv-19b}, which allows us to provide a proof-of-concept implementation of the SAPA in this form. The code can be found at \url{https://github.com/angela-r-morrison/pseudoflow_polyhedron}. In Figure \ref{fig:sapa_saa_its}, we display the iterations for a small example in the repository. Figure \ref{fig:orig_net_SAPA} shows the original network with arc capacities. The steepest-ascent circuits or shortest paths are displayed as dashed edges.
 
    \begin{figure}[t]
        \centering
        \begin{subfigure}[b]{0.5\textwidth}
         \centering
\subcaptionbox{Original Network with Capacities\label{fig:orig_net_SAPA}}{
\begin{tikzpicture}[scale = 0.85]
\tikzset{vertex/.style = {shape=circle,draw,minimum size=1em}}
\node[vertex]  (1) at (0,0) {1};
\node[vertex]  (2) at (2,1.25) {2};
\node[vertex]  (3) at (4,1.25) {3};
\node[vertex]  (4) at (2,-1.25) {4};
\node[vertex]  (5) at (4,-1.25) {5};
\node[vertex]  (6) at (6,0) {6};
\draw[-latex,ultra thick](1)--(2) node[midway, xshift=-2mm, yshift = 0mm, fill = white] {6};
\draw[-latex,ultra thick](1)--(4) node[midway, xshift=-2mm, yshift = 0mm, fill = white] {6};
\draw[-latex,ultra thick](2)--(3) node[midway, xshift=0mm, yshift = 0mm, fill = white] {3};
\draw[-latex,ultra thick](2)--(4) node[midway, xshift=0mm, yshift = 0mm, fill = white] {7};
\draw[-latex,ultra thick](3)--(5) node[midway, xshift=0mm, yshift = 0mm, fill = white] {2};
\draw[-latex,ultra thick](3)--(6) node[midway, xshift=-2mm, yshift = 0mm, fill = white] {7};
\draw[-latex,ultra thick](4)--(3) node[midway, xshift=0mm, yshift = 0mm, fill = white] {10};
\draw[-latex,ultra thick](4)--(5) node[midway, xshift=0mm, yshift = 0mm, fill = white] {2};
\draw[-latex,ultra thick](5)--(6) node[midway, xshift=-2mm, yshift = 0mm, fill = white] {4};
\end{tikzpicture}}
     \end{subfigure}
     \vskip\baselineskip
     
                \begin{subfigure}[b]{0.475\textwidth} 
                \centering
\subcaptionbox{First Iteration \label{fig:step_3}}{
\begin{tikzpicture}[scale = 0.75]
\tikzset{vertex/.style = {shape=circle,draw,minimum size=1em}}
\node[vertex]  (1) at (0,0) {1};
\node[vertex]  (2) at (2,1.25) {2};
\node[vertex]  (3) at (4,1.25) {3};
\node[vertex]  (4) at (2,-1.25) {4};
\node[vertex]  (5) at (4,-1.25) {5};
\node[vertex]  (6) at (6,0) {6};
\draw[-latex,ultra thick](1)--(2) {};
\draw[-latex,ultra thick,dashed](1)--(4) node[midway, xshift=-1mm, yshift = 1mm, fill = white] {2};
\draw[-latex,ultra thick](2)--(3) {};
\draw[-latex,ultra thick](2)--(4) {};
\draw[-latex,ultra thick](3)--(5) {};
\draw[-latex,ultra thick](3)--(6) {};
\draw[-latex,ultra thick](4)--(3) {};
\draw[-latex,ultra thick,dashed](4)--(5) node[midway, xshift=-1mm, yshift = 0mm, fill = white] {2};
\draw[-latex,ultra thick,dashed](5)--(6) node[midway, xshift=-1mm, yshift = 0mm, fill = white] {2};
\end{tikzpicture}}
        \end{subfigure}
        \hfill
        \begin{subfigure}[b]{0.475\textwidth}
            \centering
\subcaptionbox{Second Iteration \label{fig:step_1}}{
\begin{tikzpicture}[scale = 0.75]
\tikzset{vertex/.style = {shape=circle,draw,minimum size=1em}}
\node[vertex]  (1) at (0,0) {1};
\node[vertex]  (2) at (2,1.25) {2};
\node[vertex]  (3) at (4,1.25) {3};
\node[vertex]  (4) at (2,-1.25) {4};
\node[vertex]  (5) at (4,-1.25) {5};
\node[vertex]  (6) at (6,0) {6};
\draw[-latex,ultra thick,dashed](1)--(2) node[midway, xshift=0mm, yshift = 0mm, fill = white] {3};
\draw[-latex,ultra thick](1)--(4) {};
\draw[-latex,ultra thick,dashed](2)--(3) node[midway, xshift=-1mm, yshift = 0mm, fill = white] {3};
\draw[-latex,ultra thick](2)--(4) {};
\draw[-latex,ultra thick](3)--(5) {};
\draw[-latex,ultra thick,dashed](3)--(6) node[midway, xshift=0mm, yshift = 0mm, fill = white] {3};
\draw[-latex,ultra thick](4)--(3)  {};
\draw[-latex,ultra thick](4)--(5)  {};
\draw[-latex,ultra thick](5)--(6)  {};
\end{tikzpicture}}
        \end{subfigure}
        \vskip\baselineskip
 \begin{subfigure}[b]{0.475\textwidth}  
            \centering 
\subcaptionbox{Third Iteration \label{fig:step_2}}{
\begin{tikzpicture}[scale = 0.75]
\tikzset{vertex/.style = {shape=circle,draw,minimum size=1em}}
\node[vertex]  (1) at (0,0) {1};
\node[vertex]  (2) at (2,1.25) {2};
\node[vertex]  (3) at (4,1.25) {3};
\node[vertex]  (4) at (2,-1.25) {4};
\node[vertex]  (5) at (4,-1.25) {5};
\node[vertex]  (6) at (6,0) {6};
\draw[-latex,ultra thick](1)--(2) {};
\draw[-latex,ultra thick,dashed](1)--(4) node[midway, xshift=0mm, yshift = 0mm, fill = white] {4};
\draw[-latex,ultra thick](2)--(3) {};
\draw[-latex,ultra thick](2)--(4) {};
\draw[-latex,ultra thick](3)--(5) {};
\draw[-latex,ultra thick,dashed](3)--(6) node[midway, xshift=0mm, yshift = 0mm, fill = white] {4};
\draw[-latex,ultra thick,dashed](4)--(3) node[midway, xshift=0mm, yshift = 0mm, fill = white] {4};
\draw[-latex,ultra thick](4)--(5) {};
\draw[-latex,ultra thick](5)--(6) {};
\end{tikzpicture}}
        \end{subfigure}
        \hfill
        \begin{subfigure}[b]{0.475\textwidth}   
            \centering 
\subcaptionbox{Final Iteration \label{fig:step_4}}{
\begin{tikzpicture}[scale = 0.75]
\tikzset{vertex/.style = {shape=circle,draw,minimum size=1em}}
\node[vertex]  (1) at (0,0) {1};
\node[vertex]  (2) at (2,1.25) {2};
\node[vertex]  (3) at (4,1.25) {3};
\node[vertex]  (4) at (2,-1.25) {4};
\node[vertex]  (5) at (4,-1.25) {5};
\node[vertex]  (6) at (6,0) {6};
\draw[-latex,ultra thick,dashed](1)--(2) node[midway, xshift=-1mm, yshift = -1mm, fill = white] {2};
\draw[-latex,ultra thick](1)--(4) {};
\draw[-latex,ultra thick](2)--(3) {};
\draw[-latex,ultra thick,dashed](2)--(4) node[midway, xshift=0mm, yshift = 0mm, fill = white] {2};
\draw[-latex,ultra thick,dashed](3)--(5) node[midway, xshift=0mm, yshift = 0mm, fill = white] {2};
\draw[-latex,ultra thick](3)--(6) {};
\draw[-latex,ultra thick,dashed](4)--(3) node[midway, xshift=0mm, yshift = 0mm, fill = white] {2};
\draw[-latex,ultra thick](4)--(5)  {};
\draw[-latex,ultra thick,dashed](5)--(6) node[midway, xshift=-1mm, yshift = -1mm, fill = white] {2};
\end{tikzpicture}}
        \end{subfigure}
        \caption{}
        {\small Iterations of a steepest-ascent circuit augmentation scheme for a small example.
        The run replicates the steps of the Shortest Augmenting Path Algorithm.} 
        \label{fig:sapa_saa_its}
    \end{figure}

\subsection{Hungarian Method}\label{sec:hm}

The Hungarian Method (HM) is a classical algorithm for solving the assignment problem. See Algorithm \ref{alg:hung} for a description in pseudocode in the original matrix notation \cite{m-57}; the algorithm also lends itself to a natural graph-theoretical interpretation. We begin by relating the HM to pseudoflows and the pseudoflow polyhedron. 

Recall that the assignment problem is a special case of min-cost flow problems in that it runs on a bipartite network $N = L \mathbin{\dot{\cup}} T$ and every node $\ell\in L$ is a supply node with $b_\ell = 1$ and every node $t\in T$ is a demand node with $b_t = -1$. We interpret the set $L$ as people and the set $T$ as tasks. The goal is to assign each person to precisely one task and vice versa, which can formally be stated as
\begin{align*}
    \sum_{\ell \in L} x_{\ell t}  &= 1 \quad \forall t \in T\\
    \sum_{t \in T} x_{\ell t}  &= 1 \quad \forall \ell \in L \tag{assignment}\\
    x_{\ell t} &\geq 0 \quad \forall \ell \in L, \forall t \in T.
\end{align*}

The node sets $L$ and $T$ are connected in a complete bipartite network with directed arcs from $L$ to $T$. Each $x_{\ell t}$ represents flow along the arc $(\ell,t)$. The flow along each arc is implicitly bounded above by $1$ due to the main constraints. In the transfer to (\ref{pseudo_poly_form}), we may set ${\bf u}=\bm{1_m}$. The HM first greedily constructs a partial matching, where only a proper subset of people and tasks are assigned, and then iteratively finds partial matchings that include more people and tasks until all are matched. These partial matchings are readily interpreted as a $0,1$-pseudoflow: they satisfy the capacity constraints $x_{\ell t}\geq 0$, but not the flow-balance constraints for unmatched nodes. 

\begin{algorithm}
       \caption{Hungarian Method}\label{alg:hung}
\begin{algorithmic}[1] 
       \STATE \mbox{\textbf{Input:} a matrix $C$ representing the costs of each assignment for an assignment problem}
       \STATE \mbox{\textbf{Begin:}}
       \STATE Subtract the smallest element from each row of $C$
       \STATE Subtract the smallest element from each column of $C$
       \FOR{\mbox{each zero $z$ in $C'$}}
           \IF{\mbox{there are no other starred zeros in the same column and row as $z$}}
                \STATE \mbox{Star $z$}
           \ENDIF
       \ENDFOR
       \STATE Cover every column of $C'$ that contains a starred zero
       \WHILE{there is an uncovered zero}
       \FOR{\mbox{each uncovered zero $z'$ in $C'$}}
           \STATE \mbox{Prime $z'$}
           \IF{\mbox{$z'$ is in a row without a starred zero $z^*$}}
               \STATE Create an inclusion-maximal alternating path of starred and primed zeros, beginning with $z'$ ending with a primed zero 
               \STATE Unstar each starred zero in the alternating path
               \STATE Turn each primed zero into a starred zero
               \STATE Erase all primes in $C'$
               \STATE Uncover every row
               \STATE Cover every column with a starred zero
               \IF{all columns are covered}
                   \STATE Terminate
               \ENDIF
           \ELSE
               \STATE Cover the row of $z'$ and uncover the column of $z^*$
           \ENDIF
        \ENDFOR
           \IF{there are no more uncovered zeros}
               \STATE \mbox{Set $h$ as the smallest uncovered element in $C'$}
               \STATE \mbox{Add $h$ to each covered row}
               \STATE \mbox{Subtract $h$ from each uncovered column}
           \ENDIF
       \ENDWHILE
       \STATE \RETURN{} optimal assignments via the locations of the starred zeros in $C'$
\end{algorithmic}
\end{algorithm}

Before we provide a brief overview of the steps in the HM, let us comment on two of its main design principles: the effect of row and column subtractions and the role of the zeros in a cost matrix. The algorithm takes a square matrix as input. The entries in this matrix correspond to the costs $c_{\ell t}$ of assigning each person $\ell$ to each task $t$, i.e., the costs of a unit flow $x_{\ell t}$ along arc $(\ell,t)$. As the final goal is to assign every person to a task and vice versa, there will be a minimum cost incurred for each assignment of a person or task: the cheapest cost of an arc incident to the corresponding node. Subtracting the smallest value from a row or a column accounts for this `guaranteed' cost. Let $C=(c_{\ell t})$ denote the matrix of original costs and $C'=(c'_{\ell t})$ denote a matrix of costs after a sequence of row and column subtractions or additions. It is convenient to denote the total change to a row $\ell$ or column $t$ as a potential $-\pi(\ell)$ and $\pi(t)$ for the associated nodes. Then $c'_{\ell t}$ represents a current reduced cost $c'_{\ell t}=c^\pi_{\ell t} = c_{\ell t} - \pi(\ell) + \pi(t)$. Let $\mathcal{M}(L',T')$ denote the set of incidence vectors $x=(x_{\ell t})$ of perfect matchings between the set of people $L'\subset L$ and set of tasks $T'\subset T$. Then $$\arg\min_{x \in \mathcal{M}(L',T')} \sum\limits_{\ell \in L}\sum\limits_{t \in T} c'_{\ell t} \cdot x_{\ell t}  = \arg\min_{x \in \mathcal{M}(L',T')} \sum\limits_{\ell \in L}\sum\limits_{t \in T} c_{\ell t} \cdot x_{\ell t},$$ as the difference between the two sides of the equation is the fixed value $\sum_{\ell \in L'} -\pi(\ell)+\sum_{t \in T'} \pi(t)$. Essentially, an optimal perfect matching between given sets $L'\subset L$ and $T'\subset T$ is not affected by the row and column operations. This is known as the Kuhn-Munkres Theorem and the key argument for correctness of the HM \cite{h-55,m-57}. 

The algorithm maintains reduced costs $c^\pi_{\ell t}\geq 0$ for all arcs at all times. The goal of the row and column operations is to create a network of $0$-cost arcs in the matrix that allow for the identification of an optimal perfect matching of larger and larger subset $L',T'$ of $L,T$. 
There are two ways in which 0-cost arcs are marked up and used: the {\em starring} of a zero translates to adding the associated arc to the current partial matching, while {\em priming} a zero marks the associated arc as a possible option for an augmentation of the current partial matching while guaranteeing optimal cost for the new, larger sets $L'$ and $T'$. 

Let us now take a closer look at the steps of the HM. The algorithm starts by subtracting the smallest value from each row and then from each column of the matrix (Lines 3-4). This corresponds to `taking out' a guaranteed minimum cost incurred in any partial or complete matching; the corresponding minimal costs become zero. Each row and column now contains at least one zero. Next, the algorithm greedily chooses from the 0-cost arcs to assign people to tasks (Lines 5-8). If this greedy initial partial matching is a perfect matching (Lines 10-11), we are done. Otherwise, the algorithm must add people and tasks that have not been assigned to the partial matching which signals the start of the main loop (Lines 11-33).

The goal of the main loop is to iteratively increase the size of the current partial matching. In each iteration, the algorithm checks whether there exists an augmenting path in the current network of 0-cost arcs (Lines 12-14, 24-26). If one is found, the partial matching is updated based on this augmenting path (Lines 15-20). While not all people and tasks are matched yet (Lines 21-23), and the current network of 0-cost arcs does not allow for an increase in the size of the current partial matching, a new 0-cost arc must be created (Lines 28-32). Then the search for an augmenting path is repeated in the now larger network of 0-cost arcs. It continues in this fashion until all people are assigned a task, at which point the algorithm terminates and outputs the final perfect matching.
 
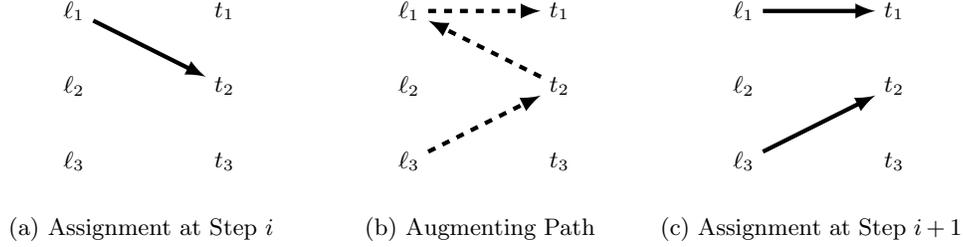
\begin{figure}
     \centering
     \begin{subfigure}[b]{0.3\textwidth}
         \centering
         \subcaptionbox{Assignment at Step $i$ \label{fig:init_assignment}}{
\begin{tikzpicture}[scale = 1]
\node  (p1) at (0,2) {$\ell_1$};
\node  (p2) at (0,1) {$\ell_2$};
\node  (p3) at (0,0) {$\ell_3$};
\node  (j1) at (2,2) {$t_1$};
\node  (j2) at (2,1) {$t_2$};
\node  (j3) at (2,0) {$t_3$};
\draw[-latex,ultra thick] (p1)--(j2) {};
\useasboundingbox  (-1,-0.5) rectangle (3,2.5);
\end{tikzpicture}}
     \end{subfigure}
     \begin{subfigure}[b]{0.3\textwidth}
         \centering
         \subcaptionbox{Augmenting Path}{
\begin{tikzpicture}[scale = 1]
\node  (p1) at (0,2) {$\ell_1$};
\node  (p2) at (0,1) {$\ell_2$};
\node  (p3) at (0,0) {$\ell_3$};
\node  (j1) at (2,2) {$t_1$};
\node  (j2) at (2,1) {$t_2$};
\node  (j3) at (2,0) {$t_3$};
\draw[-latex,ultra thick,dashed] (p3)--(j2) {};
\draw[-latex,ultra thick,dashed] (j2)--(p1) {};
\draw[-latex,ultra thick,dashed] (p1)--(j1) {};
\useasboundingbox  (-1,-0.5) rectangle (3,2.5);
\end{tikzpicture}}
     \end{subfigure}
     \begin{subfigure}[b]{0.3\textwidth}
         \centering
         \subcaptionbox{Assignment at Step $i+1$ \label{fig:end_assignment}}{
\begin{tikzpicture}[scale = 1]
\node  (p1) at (0,2) {$\ell_1$};
\node  (p2) at (0,1) {$\ell_2$};
\node  (p3) at (0,0) {$\ell_3$};
\node  (j1) at (2,2) {$t_1$};
\node  (j2) at (2,1) {$t_2$};
\node  (j3) at (2,0) {$t_3$};
\draw[-latex,ultra thick] (p1)--(j1) {};
\draw[-latex,ultra thick] (p3)--(j2) {};
\useasboundingbox  (-1,-0.5) rectangle (3,2.5);
\end{tikzpicture}}
     \end{subfigure}
        \caption{An example for an augmentation step in the Hungarian Method.}
        \label{fig:hm_step}
\end{figure}

We now turn to the results of this section. We begin by showing that the HM performs a circuit walk over a special face of (\ref{pseudo_poly_form}) (Lemma \ref{lem:hm_circ_walk}). Then we show that every circuit walk for the assignment problem over that face is an edge walk (Theorem \ref{thm:hm_edge_walk}). Next, we provide a description of a circuit augmentation scheme which replicates a run of the HM (Theorem \ref{thm:hm_dantzig}) on the same face of (\ref{pseudo_poly_form}). Finally, we conclude with the observation that a primal Simplex run which uses the largest-coefficient rule is an alternative method for replicating a run of the HM (Corollary \ref{cor:hm_dantzig}). 

Due to the fact that the assignment problem is a special case of the min-cost flow problem, some arguments from Section \ref{sec:sspa} are helpful again. We begin by showing that the HM performs a circuit walk over a certain face of (\ref{pseudo_poly_form}).

\begin{lemma}\label{lem:hm_circ_walk}
    The Hungarian Method performs a circuit walk over the face, $F$, of (\ref{pseudo_poly_form}) where $s^+_{\ell} = 0$ for all $\ell \in L$ and $s^-_t = 0$ for all $t \in T$. 
\end{lemma}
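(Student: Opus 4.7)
The plan is to follow the same motions as the proof of Lemma \ref{lem:sspa_circ_walk}, adapted to the bipartite structure of the assignment problem. I would identify the unique starting vertex of $F$ by specializing Lemma \ref{lem:feas_vert}, and then show that each augmentation produced by the HM (both the greedy starrings in Lines 5--8 and the alternating-path updates in Lines 14--20) is a maximal-length step along a single path circuit of the fourth type in Corollary \ref{cor:final_circ_char}. The row and column updates of the cost matrix (Lines 3--4 and 28--32) must be addressed separately, but they only alter the reduced costs, not $\bm{x}$, $\bm{s^+}$, $\bm{s^-}$, and thus do not correspond to moves in the polyhedron.

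For the starting point, since $b_\ell = 1$ for every $\ell \in L$ and $b_t = -1$ for every $t \in T$, Lemma \ref{lem:feas_vert} yields the unique zero-pseudoflow vertex $\bm{y_0} = (\bm{0_m}, \bm{s^+}, \bm{s^-})$ with $s^-_\ell = 1$, $s^+_\ell = 0$ for all $\ell \in L$, and $s^+_t = 1$, $s^-_t = 0$ for all $t \in T$. By construction, $\bm{y_0} \in F$, and at every intermediate partial matching encountered during the run, the defining equations of $F$ force $s^+_\ell = s^-_t = 0$, which makes $s^-_\ell$ (respectively $s^+_t$) the indicator of whether $\ell$ (respectively $t$) is currently unmatched. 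This invariant will be the backbone of the inductive argument.

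For the inductive step, I would interpret an alternating augmenting path from an unmatched row $\ell_0$ through primed and starred zeros to an unmatched column $t_k$ as an undirected path $H_{\ell_0 t_k}$ in the original bipartite network, oriented so that arcs entering the matching (primed zeros) receive $+1$ and arcs leaving the matching (starred zeros) receive $-1$. The update the HM performs is then exactly a unit step along the circuit $\bm{g} = (\bm{H_{\ell_0 t_k}}, -\bm{e_{t_k}}, -\bm{e_{\ell_0}})$ from Corollary \ref{cor:final_circ_char}: the $x$-components correctly flip the matching along the alternating path, while the slack components reduce $s^-_{\ell_0}$ and $s^+_{t_k}$ from $1$ to $0$. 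Feasibility of $\bm{g}$ and the fact that the walk remains in $F$ follow because only the two permitted slacks change, and $\alpha = 1$ is of maximal length because each new matching arc saturates its unit capacity, each removed matching arc reaches its lower bound of $0$, and both updated slack variables reach $0$. The greedy starrings of Lines 5--8 are handled as the degenerate case of an alternating path consisting of a single primed arc.

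The main obstacle I anticipate is the first translation step: showing unambiguously that the alternating path of the HM, which mixes matched and unmatched edges in both directions, corresponds to a single type-four path circuit from Corollary \ref{cor:final_circ_char} and not to some combination of other circuits. The cleanest way is to argue that any feasible move from the current vertex that changes only $s^-_{\ell_0}$ and $s^+_{t_k}$ among the slack variables must, by the circuit characterization and the equality constraints, be a positive multiple of this specific circuit, in direct analogy to the uniqueness argument in the proof of Lemma \ref{lem:sspa_circ_walk}. A secondary point worth articulating is that the row and column operations on the cost matrix are bookkeeping steps that make new zero-cost arcs available but do not shift the current pseudoflow; hence they do not contribute circuit steps and the walk over $F$ consists exactly of the augmentation steps.
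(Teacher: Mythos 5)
Your proposal is correct and follows essentially the same route as the paper's proof: it starts at the zero-pseudoflow vertex obtained by specializing Lemma \ref{lem:feas_vert}, identifies each greedy starring and each alternating-path augmentation with the unique path circuit $(\bm{H_{\ell t}},-\bm{e_{t}},-\bm{e_{\ell}})$ compatible with the face $F$, and argues maximality of the unit step length via the $0,1$-capacities and the slacks dropping to zero. The additional remark that the row and column operations on the cost matrix do not move the point in the polyhedron is a correct and worthwhile clarification, though the paper leaves it implicit.
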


\begin{proof}
We restrict to the face $F$ of (\ref{pseudo_poly_form}) where $s^+_{\ell} = 0$ for all $\ell \in L$ and $s^-_t = 0$ for all $t \in T$. The HM begins at the $0,1$-vertex $\bm{y_0} = (\bm{0_{m}}, \bm{s^+}, \bm{s^-})$ with $s_{t}^+ = -b_{t}=1$ for all $t \in T$ and $s_{\ell}^- = b_{\ell}=1$ for all $\ell \in L$ (Lemma \ref{lem:feas_vert}). Note that $\bm{y_0}$ lies in $F$. The algorithm then greedily assigns people to tasks (Lines 5-8). The assignment between a given person $\ell$ and task $t$ in each step behaves like forming a one-arc path between a supply node $\ell$ and a demand node $t$ in the SSPA. Therefore, several arguments for path circuits from Lemma \ref{lem:sspa_circ_walk} also hold here: once the assignment ($\ell$, $t$) is chosen, flow is sent along $x_{\ell t}$. This corresponds to the path circuit $\bm{g} = (\bm{x_{\ell t}},-\bm{e_{t}},-\bm{e_{\ell}})$ due to the restriction to $F$. After completing the circuit step $\bm{y_1} = \bm{y_0} + \delta \bm{g}$, we arrive at the point $\bm{y_1} = (\bm{x_{\ell t}},\bm{s^+}-\bm{e_{t}}, \bm{s^-}-\bm{e_{\ell}}) \in F$. The step length is $\delta=1$, and maximal, due to the upper bound $1$ on flow along $x_{\ell t}$ and reducing slack variables $s^+_t,s^-_\ell$ from $1$ to $0$; $\bm{y_1}$ again is a $0,1$-vector. Therefore, $\bm{y_1}$ satisfies the same properties used for $\bm{y_0}$. The arguments hold for subsequent circuit steps after $\bm{y_1}$, corresponding to the remainder of the greedy assignments. 

The main loop (Lines 11-33) begins with a partial matching $M_k$ corresponding to a $0,1$-vector $\bm{y_k} =(\bm{x_{k}},\bm{s^+_{k}},\bm{s^-_{k}})$ where $\bm{x_{k}} = \sum_{(\ell, t) \in M_k}\bm{e_{\ell t}}$, $\bm{s^+_{k}} = s^+-\sum_{\{t:(\ell,t) \in M_k\}} \bm{e_{t}}$, and $\bm{s^-_{k}} = s^--\sum_{\{\ell:(\ell,t) \in M_k\}} \bm{e_{\ell}}$. In each iteration, the current partial assignment and corresponding $\bm{y_k}$ can only be changed through an augmenting path (Lines 14-23). Such a path $H_{\ell t}$ must connect a person $\ell$ and a task $t$ that are not in the node set of $M_k$. Thus, $s^+_t=s^-_\ell=1$ in $\bm{y_k}$. Among the path circuits corresponding to $H_{\ell t}$, only $\bm{g} = (\bm{H_{\ell t}},-\bm{e_{t}},-\bm{e_{\ell}})$ remains in $F$. The circuit step $\bm{y_{k+1}} = \bm{y_k}+\delta \bm{g} \in F$ is of maximal length $\delta=1$ and one arrives at the $0,1$-vector $\bm{y_{k+1}} = (\bm{x_{M_k}}+\bm{H_{\ell t}},\bm{s^+_{M_k}}-\bm{e_{t}},\bm{s^-_{M_k}}-\bm{e_{\ell}})=(\bm{x_{M_{k+1}}},\bm{s^+_{M_{k+1}}},\bm{s^-_{M_{k+1}}}) \in F$ corresponding to the new partial matching $M_{k+1}$. 

The arguments for the circuit step from  $\bm{y_k}$ to $\bm{y_{k+1}}$ also hold for later iterations. This proves the claim.
\end{proof}

Our next goal is to show that the HM performs an edge walk. We prove a slightly stronger statement: in the face $F$ from Lemma \ref{lem:hm_circ_walk}, every circuit walk is an edge walk.

\begin{theorem}\label{thm:hm_edge_walk}
 For the pseudoflow polyhedron of the assignment problem restricted to the face, $F$, where $s^+_{\ell} = 0$ for all $\ell \in L$ and $s^-_t = 0$ for all $t \in T$, every circuit walk is an edge walk. 
\end{theorem}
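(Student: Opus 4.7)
The plan is to show, at each vertex $\bm{y}$ of $F$ and for each feasible circuit $\bm{g}$ at $\bm{y}$, that the maximum-length step $\bm{y}\to \bm{y}+\alpha\bm{g}$ moves along an edge of $F$. The key observation is that $F$ is affinely isomorphic to the partial-matching polytope of the complete bipartite graph $K_{|L|,|T|}$ via the projection $(\bm{x},\bm{s^+},\bm{s^-})\mapsto \bm{x}$: on $F$ the flow-balance equalities uniquely determine $s^-_\ell = 1-\sum_t x_{\ell t}$ and $s^+_t = 1-\sum_\ell x_{\ell t}$ (while the other slack components are fixed to $0$), and non-negativity of these two expressions is precisely the degree constraints $\sum_t x_{\ell t}\le 1$ and $\sum_\ell x_{\ell t}\le 1$ defining the matching polytope. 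Consequently vertices of $F$ are in bijection with partial matchings $M$ of $K_{|L|,|T|}$, and edges of $F$ correspond to edges of the matching polytope.

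Next, I would restrict the list in Corollary~\ref{cor:final_circ_char} to the circuits feasible on $F$ at such a vertex. All trivial circuits $\pm(\bm{0_m},\bm{e_i},\bm{e_i})$ drop out immediately, since each moves a slack variable pinned to $0$ on $F$. Of the four path-circuit types, only those whose non-zero slack components lie among $\{s^-_\ell:\ell\in L\}\cup\{s^+_t:t\in T\}$ survive; by bipartiteness these correspond to alternating paths between an unmatched $L$-node and an unmatched $T$-node, between two $L$-nodes, or between two $T$-nodes. Cycle circuits $\pm(\bm{C},\bm{0_n},\bm{0_n})$ all survive and correspond to even cycles of $K_{|L|,|T|}$. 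Feasibility at a $0,1$-vertex $\bm{y}$ moreover forces each such circuit to alternate matching and non-matching arcs of $M$ with a definite polarity, and the maximum step length is $\alpha=1$. The endpoint $\bm{y'}=\bm{y}+\bm{g}$ is then again a $0,1$-vertex of $F$ corresponding to the partial matching $M'$ obtained from $M$ by flipping along the single alternating cycle or path encoded by $\bm{g}$; in particular $M\,\triangle\,M'$ is exactly that single alternating cycle or path.

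To conclude, I would invoke the classical adjacency characterization of the matching polytope: two partial matchings $M,M'$ are adjacent vertices iff $M\,\triangle\,M'$ is a single alternating cycle or a single alternating path. Transferring this across the affine isomorphism above, each circuit step in $F$ traverses an edge of $F$, so every circuit walk over $F$ is an edge walk, proving the claim.

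The step I expect to be the main obstacle is the case analysis of feasible circuits in the second paragraph: one must verify that every sign choice in Corollary~\ref{cor:final_circ_char}, combined with the face constraints of $F$ and the $0,1$-structure of $\bm{y}$, yields either an infeasible direction or a single alternating cycle or alternating path whose flipping polarity is consistent with $M$. The matching-polytope adjacency result itself is standard, but must be applied in its partial-matching form rather than only to perfect matchings.
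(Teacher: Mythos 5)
Your proposal is correct, but it reaches adjacency by a genuinely different route than the paper. You first observe that on $F$ the flow-balance equalities determine $s^-_\ell = 1-\sum_t x_{\ell t}$ and $s^+_t = 1-\sum_\ell x_{\ell t}$, so $F$ is affinely isomorphic to the bipartite partial-matching polytope, and then you invoke the classical characterization (Chv\'atal-type) that two matchings are adjacent vertices iff their symmetric difference is a single alternating path or cycle. The paper instead works directly in $F$ and certifies both vertexhood of the step endpoint and adjacency of the two endpoints by explicitly constructing objective functions $\bm{c}$ with $\bm{c}^T\bm{y_1}=\bm{c}^T\bm{y_2}>\bm{c}^T\bm{y_3}$ for all other vertices $\bm{y_3}$, splitting into the augmenting-path case (where the two matchings have different cardinality, requiring the weights $1$ and $k/(k+1)$) and the cycle/non-augmenting-path case (equal cardinality, weights $\pm1$). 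Both arguments share the same preliminary case analysis: trivial circuits are infeasible on $F$, the surviving path circuits are exactly those whose slack components lie in $\{s^-_\ell\}\cup\{s^+_t\}$, feasibility at a $0,1$-vertex forces alternation with respect to the current matching, and the maximal step length is $1$. What your approach buys is brevity and a conceptual explanation (the theorem is really a statement about the matching polytope); what the paper's approach buys is self-containedness, since it never needs to import the adjacency criterion and the explicit certificates make the argument verifiable line by line. If you write your version up, you should state the partial-matching adjacency criterion precisely with a citation, since the perfect-matching (Birkhoff) version alone does not cover the alternating-path case.
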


\begin{proof}
First, note that the face $F$ is bounded (in the unbounded pseudoflow polyhedron): 
the upper capacities of $1$ for all $x_{\ell t}$ and the flow balance constraints, coupled with the restriction to $F$, give an implicit upper bound on the unrestricted slack arcs $s^-_{\ell}$ and $s^+_t$ for all $\ell \in L, t \in T$. This allows us to prove the claim by showing that any circuit step starting at a vertex $\bf{y_k}$ of face $F$ follows an edge direction to an adjacent vertex $\bf{y_{k+1}}$. Further, steps along trivial circuits are not feasible in $F$, and so it suffices to consider cycle circuits and path circuits.

By total unimodularity of the node-arc incidence matrix of the underlying pseudoflow network and by $0\leq {\bf x} \leq 1$, the face $F$ has only integral vertices $\bm{y_k} =(\bm{x_k},\bm{s^+_k},\bm{s^-_k})$ with $0,1$-vector $\bm{x_k}$ corresponding to a selection of arcs. In particular, the selection of arcs must correspond to a complete or partial matching since, when restricted to $F$, all arcs for $\ell \in L$ and $t \in T$ in the pseudoflow network are outgoing and ingoing, respectively. We consider a step along a feasible cycle or path circuit starting at $\bm{y_k}$. Due to $\bm{x_k} \in \{0,1\}$, it is guaranteed to have step length $\delta =1$ and arrives at a  $\bm{y_{k+1}} =(\bm{x_{k+1}},\bm{s^+_{k+1}},\bm{s^-_{k+1}}) \in F$, where $\bm{x_{k+1}}$ corresponds to a new partial matching. This $\bm{y_{k+1}}$ is uniquely maximal over $F$ with respect to objective function $\bm{c}$, where 
\[ c_{ij} = 
    \begin{cases} 
      1 &  \text{when } (x_{k+1})_{ij} = 1\\
      -1 & \text{when } (x_{k+1})_{ij} = 0\\
      0 & \text{for all slack arcs}
   \end{cases}
\]
Thus $\bm{y_{k+1}}$ is another vertex of $F$ (that differs from $\bm{y_{k}}$ by a single circuit). The objective function $\bm{c}$ can also be used to observe that all vertices of $F$ are a complete or partial matching. 

It remains to consider pairs of vertices $\bm{y_1}=(\bm{x_1},\bm{s^+_1},\bm{s^-_1})$ and $\bm{y_2}=(\bm{x_2},\bm{s^+_2},\bm{s^-_2})$ of $F$ that differ by a single cycle or path circuit, with the aim of constructing an objective function $\bm{c}$ such that $\bm{c^Ty_1} = \bm{c^Ty_2} > \bm{c^Ty_3}$ for all vertices $\bm{y_3} \neq \bm{y_1},\bm{y_2}$ of $F$. This will imply that they are adjacent. 

First, let $\bm{y_2}$ differ from $\bm{y_1}$ by a path circuit $\bm{g} = (\bm{H_{\ell t}},-\bm{e_{t}},-\bm{e_{\ell}})$; recall that the other path circuits for $H_{\ell t}$ from Corollary \ref{cor:final_circ_char} are not feasible in $F$. Further, let $H_{\ell t}$ be an augmenting (alternating) path, c.f. Figure \ref{fig:hm_step}, where $k$ is the number of people in $\bm{y_1}$ that are reassigned and $q$ the number of assignments shared between $\bm{y_1}$ and $\bm{y_2}$. Thus, $\bm{y_2}$ has one more assignment compared to $\bm{y_1}$. 
Define $\bm{c}$ to be 
\[ c_{ij} = 
    \begin{cases} 
      1 &  \text{when } (x_1)_{ij} = 1 \\
      -1 & \text{when } (x_1)_{ij} = (x_2)_{ij} = 0\\
      \frac{k}{k+1} & \text{when } (x_2)_{ij} = 1 \neq (x_1)_{ij}\\
      0 & \text{for all slack arcs}
   \end{cases}
\]
The objective function value for $\bm{y_1}, \bm{y_2}$ then becomes
\begin{align*}
    \bm{c}^T\bm{y_1} = q+ \sum^{k}_{m=1}1 = q+ k = q+ \sum^{k+1}_{m=1}\frac{k}{k+1} = \bm{c}^T\bm{y_2}
\end{align*} 
and $\bm{c^Ty_3} < q+k =\bm{c^Ty_1} = \bm{c^Ty_2}$ when $\bm{y_3}$ shares no assignments with $\bm{y_1}$ or $\bm{y_2}$ or when it is a subset of assignments from $\bm{y_1}$ or $\bm{y_2}$. If $\bm{y_3}$ contains a combination of assignments from $\bm{y_1}$ and $\bm{y_2}$, then the cost of the matching in $\bm{y_3}$ must be strictly less than that of $\bm{y_1}$ and $\bm{y_2}$. This is because nodes contained in the matchings of $\bm{y_1}$ and $\bm{y_2}$ cannot have multiple assignments and $\bm{y_3} \neq \bm{y_1},\bm{y_2}$. Therefore, $\bm{c}^T\bm{y_3} < \bm{c}^T\bm{y_1} = \bm{c}^T\bm{y_2}$ for all vertices $\bm{y_3} \neq \bm{y_1}, \bm{y_2}$.

Similar arguments hold when $\bm{y_1}$ and $\bm{y_2}$ differ by a cycle circuit or a path circuit where $H_{\ell t}$ is not an augmenting path. In these cases, the number of assignments in $\bm{y_1}$ and $\bm{y_2}$ is the same. We define $\bm{c}$ to be
\[ c_{ij} = 
    \begin{cases} 
      1 &  \text{when } (x_1)_{ij} = 1 \text{ or } (x_2)_{ij} = 1\\
      -1 & \text{when } (x_1)_{ij} = (x_2)_{ij} = 0\\
      0 & \text{for all slack arcs}
   \end{cases}
\]
which immediately gives $\bm{c}^T\bm{y_1} = \bm{c}^T\bm{y_2}$. 
Again, we see that $\bm{c}^T\bm{y_3} < \bm{c}^T\bm{y_1} = \bm{c}^T\bm{y_2}$ for all vertices $\bm{y_3} \neq \bm{y_1}, \bm{y_2}$.

We have shown that circuit steps starting at a vertex of $F$ lead to another vertex of $F$, and that vertices which differ by a path or cycle circuit are adjacent. Thus, all circuit walks on $F$ are edge walks. 
\end{proof}

From Lemma \ref{lem:hm_circ_walk}, we know that the HM performs a circuit walk in the face $F$ of (\ref{pseudo_poly_form}) for the assignment problem. We also know from Theorem \ref{thm:hm_edge_walk} that all circuit walks in $F$ are, in fact, edge walks. Together, this allows us to conclude that the HM performs an edge walk. 

\begin{corollary}\label{cor:hm_edge_walk}
    The Hungarian Method performs an edge walk on the face of (\ref{pseudo_poly_form}) where $s^+_{\ell} = 0$ for all $\ell \in L$ and $s^-_t = 0$ for all $ t \in T$.
\end{corollary}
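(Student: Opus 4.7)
The plan is essentially to observe that this corollary is immediate from the combination of the two preceding results, so my proof proposal will be quite short and focus on articulating the combination cleanly rather than introducing new machinery. Lemma \ref{lem:hm_circ_walk} already establishes that the iterates produced by the Hungarian Method form a circuit walk that lives entirely in the specified face $F \subseteq (\ref{pseudo_poly_form})$, namely the face obtained by setting $s^+_\ell = 0$ for all $\ell \in L$ and $s^-_t = 0$ for all $t \in T$. Meanwhile, Theorem \ref{thm:hm_edge_walk} provides the stronger structural fact that \emph{every} circuit walk on $F$ is in fact an edge walk of $F$ (and hence of the pseudoflow polyhedron, since $F$ is a face).

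First I would restate, in a single sentence, what each of the two results contributes: Lemma \ref{lem:hm_circ_walk} gives us that the sequence of iterates $\bm{y_0}, \bm{y_1}, \ldots$ produced by the algorithm is a circuit walk in $F$, starting at the $0,1$-vertex $\bm{y_0}$ corresponding to the empty matching with appropriate slack values, and that each consecutive pair $(\bm{y_k}, \bm{y_{k+1}})$ is related by adding a scaled feasible circuit. Then I would invoke Theorem \ref{thm:hm_edge_walk}, applied to this specific circuit walk, to conclude that each step $(\bm{y_k}, \bm{y_{k+1}})$ traverses an edge of $F$, and therefore an edge of the pseudoflow polyhedron (since edges of a face are edges of the polyhedron).

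There is essentially no obstacle here, since the two previous results have already done all the real work; the only thing to double-check is the (obvious) compatibility of the hypotheses. Specifically, Theorem \ref{thm:hm_edge_walk} is stated for \emph{arbitrary} circuit walks on $F$ in the assignment polyhedron setting, and Lemma \ref{lem:hm_circ_walk} produces exactly such a walk, with $\bm{y_0}$ a vertex of $F$ by Lemma \ref{lem:feas_vert}. Thus the two statements compose cleanly and yield the corollary with no further calculation.
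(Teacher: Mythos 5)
Your proposal is correct and matches the paper's reasoning exactly: the corollary is obtained by combining Lemma \ref{lem:hm_circ_walk} (the Hungarian Method traces a circuit walk in the face $F$) with Theorem \ref{thm:hm_edge_walk} (every circuit walk in $F$ is an edge walk). No further argument is needed.
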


We next show that a run of the HM can be replicated using a Dantzig circuit augmentation scheme. To this end, we can make use of some of our observations for the SSPA. The assignment problem is a special case of a min-cost flow problem. In Theorem \ref{thm:sspa_dantzig}, we explained the behavior of a circuit augmentation scheme using Dantzig's rule for an objective function that penalizes the use of slack arcs: it corresponds to a sequence of augmenting (alternating) paths as in the SSPA. The SSPA and the circuit augmentation scheme terminate with the correct solution once there are no more excess and deficit nodes. We begin the proof by transferring these arguments to the HM.

A difference between the HM and a special case of the SSPA lies in the HM precisely specifying the new pair of matched nodes. In the second part of the proof, we explain how to adjust an objective function to arrive at the same selection of paths. The augmentation scheme encompasses both pre-processing (Lines 5-8) and the main loop (Lines 11-33) of the HM.

\begin{theorem}\label{thm:hm_dantzig}
    A circuit augmentation scheme over (\ref{pseudo_poly_form}) using Dantzig’s rule replicates a run of the Hungarian Method.
\end{theorem}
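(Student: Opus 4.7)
The plan is to follow the general recipe established in the proof of Theorem \ref{thm:sspa_dantzig} and adapt it to the special structure of the assignment problem. I would use the objective function $\bm{c}=(\bm{c_x},\bm{M_n},\bm{M_n})$ with $\bm{c_x}$ the original arc costs and $M=1+\sum_{(\ell,t)\in A}c_{\ell t}$ a sufficiently large slack penalty. By Lemma \ref{lem:hm_circ_walk}, starting from the vertex $\bm{y_0}$ with $\bm{x_0}=\bm{0_m}$, the scheme remains in the face $F$ where $s^+_\ell=0$ for $\ell \in L$ and $s^-_t=0$ for $t\in T$. Combined with Corollary \ref{cor:final_circ_char}, this means the only path circuits at any reachable $\bm{y_k}$ are of the form $\bm{g}=(\bm{H_{\ell t}},-\bm{e_t},-\bm{e_\ell})$ for unmatched $\ell \in L$ and unmatched $t \in T$, alongside cycle circuits which are dominated by any augmenting path circuit due to the $2M$ slack contribution.

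Next, I would show by induction on iterations that Dantzig's rule selects an augmenting path circuit whose underlying path has minimum original cost among all augmenting paths between unmatched pairs. Since all feasible improving circuits contribute at most $2M$ in slack reduction, and exactly $2M$ is achieved only by the circuits $\bm{g}=(\bm{H_{\ell t}},-\bm{e_t},-\bm{e_\ell})$ just described, the Dantzig value $-\bm{c}^T\bm{g}=2M-\bm{c_x^T}\bm{H_{\ell t}}$ is maximized precisely when $\bm{c_x^T}\bm{H_{\ell t}}$ is globally minimized over unmatched pairs $(\ell,t)$. The greedy initialization of the HM (Lines 3--9) corresponds to a sequence of single-arc paths of original cost zero after row/column subtractions, which by the Kuhn-Munkres Theorem are exactly the minimum-cost one-arc completions of the current partial matching; these are Dantzig-optimal among the single-arc augmentations available at that stage.

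For the main loop, I would invoke the Kuhn-Munkres invariance directly: the HM maintains node potentials $\bm{\pi}$ with $c^\pi_{\ell t}\geq 0$, and each augmenting alternating path it traces consists of arcs with $c^\pi_{ij}=0$. Because the difference $\sum_{(i,j)\in H_{\ell t}} c_{ij} - \sum_{(i,j)\in H_{\ell t}} c^\pi_{ij}=-\pi(\ell)+\pi(t)$ depends only on the endpoints, a zero-reduced-cost augmenting path in the HM realizes the minimum original cost among all augmenting paths between its specific endpoints, and moreover the Kuhn-Munkres optimality of the resulting enlarged matching certifies this to be globally minimum over all unmatched pairs that could be linked at that iteration. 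Thus the HM's chosen path is a Dantzig-optimal path circuit at $\bm{y_k}$, and the circuit step in the augmentation scheme yields exactly $\bm{y_{k+1}}$. Iterating this argument proves the claim.

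The main obstacle will be handling the genuine freedom of choice inside both procedures. The HM leaves considerable latitude in selecting an uncovered zero, tracing an alternating path, and ordering the greedy initialization; Dantzig's rule likewise admits ties. The proof has to argue carefully that for \emph{any} particular execution of the HM, there is a consistent tie-breaking under which Dantzig-optimal selection reproduces the same sequence of partial matchings. I would handle this by showing that each concrete HM choice is \emph{contained} in the set of Dantzig-optimal circuits at that iteration (via the Kuhn-Munkres argument above), so it is always an admissible Dantzig pivot, and then close the induction. A secondary technicality is the pre-processing phase, where row and column subtractions do not themselves correspond to circuit moves; one must verify that the starring phase can be interpreted cleanly as consecutive circuit steps of length one along Dantzig-optimal single-arc paths, which follows once the initial potentials are absorbed into the cost function via the Kuhn-Munkres equivalence.
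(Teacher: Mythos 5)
Your overall framework (the penalty objective $(\bm{c_x},\bm{M_n},\bm{M_n})$, the restriction to the face $F$, the observation that Dantzig's rule must pick a path circuit reducing two slack variables) matches the first half of the paper's proof. But the central claim in your second and third paragraphs --- that the augmenting path traced by the Hungarian Method is \emph{globally} minimum in original cost over all unmatched pairs $(\ell,t)$, and hence always lies in the Dantzig-optimal set so that only tie-breaking remains --- is false, and this is exactly the difficulty the paper's proof has to work around. A zero-reduced-cost path between $\ell_k$ and $t_k$ certifies only that it is a shortest path \emph{between those two endpoints}: its original cost equals $\pi(\ell_k)-\pi(t_k)$, and this offset varies from pair to pair, so the Kuhn--Munkres invariance gives you no comparison \emph{across} different unmatched pairs. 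Already in the greedy starring phase this breaks: for the cost matrix $\bigl(\begin{smallmatrix}5&6\\1&3\end{smallmatrix}\bigr)$, the reduced matrix is $\bigl(\begin{smallmatrix}0&0\\0&1\end{smallmatrix}\bigr)$, the HM stars position $(1,1)$ first (original cost $5$), while the unadjusted Dantzig rule would strictly prefer the arc of original cost $1$. So the HM's choice is not merely a tie among Dantzig-optimal circuits; it can be strictly Dantzig-suboptimal for your objective, and your induction does not close.

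The paper resolves this by \emph{modifying the objective function} rather than by arguing the HM's paths are already optimal: after observing the order $(\ell_1,t_1),\dots,(\ell_{|L|},t_{|T|})$ in which a given run of the HM matches pairs, it replaces the uniform slack penalty $M$ by $M-2Di$ on $s^-_{\ell_i}$ and $s^+_{t_i}$ (with $M\gg D:=1+\sum c_{ij}$), so that the slack contribution alone forces Dantzig's rule to connect the pairs in exactly that order, with the $\bm{c_x}$-part then selecting the shortest path between the prescribed endpoints --- which the zero-reduced-cost argument does correctly supply for a \emph{fixed} pair. Without some such endpoint-dependent adjustment (or an equivalent device), your proof establishes only that Dantzig's rule replicates a run of the SSPA specialized to the assignment problem, not a run of the Hungarian Method with its particular sequence of matched pairs.
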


\begin{proof}
We prove the claim by providing an objective function $\bm{c} = (\bm{c_x},\bm{c_{s^+}},\bm{c_{s^-}}) \in \R^{m+2n}$ such that the Dantzig circuits $\bm{g}$ correspond to the path circuits from Lemma \ref{lem:hm_circ_walk} in the face $F$. The vector $\bm{c_x} \in \R^{m}$ corresponds to the (non-negative) costs of the arcs from the original network. As we will see, it suffices to provide appropriate costs $\bm{c_{s^+}},\bm{c_{s^-}}$ for the slack variables.

We begin with a transfer of arguments for the SSPA that also apply to the HM. We discourage flow on slack arcs through the use of a sufficiently large cost $M \gg D:= 1 + \sum_{(i,j) \in A} c_{ij}$ and set $\bm{c_{s^+}} = \bm{c_{s^-}} = \bm{M_n} = M\cdot \bm{1_{n}}$. Using $\bm{c} = (\bm{c_x},\bm{M_n},\bm{M_n})$ we reward assignments made using $x$-arcs and penalize the use of slack arcs. Let us consider the value of $-\bm{c}^T\bm{g}$ for each type of circuit from Corollary \ref{cor:final_circ_char}. Note that $-M \ll -D < \bm{c_x^Tg_x} < D \ll M$ for any circuit $\bm{g}$, where $\bm{g}=(\bm{g_x},\bm{g_{s^+}},\bm{g_{s^-}})$ and $\bm{g_x}$ are the indices of the circuit $g$ that corresponds to $x$-arcs. This again allows a categorization of the circuits by the slack contribution $s_g=\bm{M_{n}^T\bm{g_{s^+}}}+\bm{M_{n}^T\bm{g_{s^-}}}$: trivial circuits have $s_g=2M$ or $s_g=-2M$; cycle circuits have $s_g = 0$; path circuits have $s_g=-2M$, $s_g=0$, or $s_g=2M$. If there exists a feasible circuit with slack contribution $-2M$, which corresponds to reducing flow along two slack arcs, a maximum value of $-\bm{c^Tg}$ can only be achieved for such a circuit. Therefore $\bm{g}$ can only be a path circuit or a trivial circuit.

The circuit augmentation scheme starts at the vertex $\bm{y_0}=(\bm{x_0},\bm{s^+},\bm{s^-})$ in $F$ with $\bm{x_0}=\bm{0_m}$. Because at least one of each pair $s^{\pm}_i$ is equal to zero, trivial circuits are not feasible at $\bm{y_0}$. For any pair of person and task ($\ell, t$), there exists a path circuit $\bm{g}=(\bm{H_{\ell t}},-\bm{e_{t}},-\bm{e_{\ell}})$ that reduces flow along two slack arcs. Dantzig's rule chooses a best path circuit $\bm{g}$ among these and performs a maximal circuit step to $\bm{y_{1}}=\bm{y_0} + \delta \bm{g}$ in $F$. For $\bm{y_{1}}$ again at least one of each pair $s_i^\pm$ is equal to zero and a non-zero slack variable corresponds to the remaining excess or deficit one at the node. This implies that the above arguments can be repeated for $\bm{y_{k+1}}$ in place of $\bm{y_k}$: trivial circuits are not feasible; while there remain unmatched people and tasks, a path circuit that reduces flow along two non-zero slack arcs is chosen; and the new $\bm{y_{k+2}}$ satisfies the same properties again. This augmentation scheme replicates a run of the SSPA on the assignment problem, and terminates with the correct solution (Theorem \ref{thm:sspa_dantzig}).

A key difference between the HM and the SSPA lies in the choice of pairs $(\ell,t)$ of person $\ell$ and task $t$ in the HM. We complete the proof by adjusting the objective function $\bm{c}$ so that the order of augmenting paths is precisely that of the given run of the HM.
In the pre-processing stage (Lines 5-8) of the HM, an initial partial matching is constructed greedily. Without loss of generality, and for the sake of simple indexing, consider a run in which a partial matching of $k-1$ arcs $(\ell_1,t_1),\dots,(\ell_{k-1},t_{k-1})$ is created in that order. Let $L',T'$ denote the corresponding sets of nodes. Each of the arcs corresponds to an iteration based on an augmenting path with just a single $x$-arc.

The selection of arcs for this initial partial matching comes from $0$-cost arcs after the initial row and column subtractions; i.e.,  $c^\pi_{\ell_i t_i}=c_{\ell_i t_i} - \pi(\ell_i) + \pi(t_i) = 0$. As $c^\pi_{\ell t}\geq 0$ for all arcs, in each iteration, the one-$x$-arc path using $(\ell_i,t_i)$ is a shortest path among all paths from $\ell_i$ to $t_i$ in the current residual network for the original costs, too. To guarantee that a path between $\ell_i$ and $t_i$ is chosen in iteration $i$ of a circuit augmentation scheme using Dantzig's rule, one can update $\bm{c} = (\bm{c_x},\bm{M_n},\bm{M_n})$ by using
$$(\bm{c_x},\bm{M_n},\bm{M_n}) + 
\left(\bm{0}, -2D \cdot \sum_{t_i \in T'} i \bm{e_{t_i}}, -2D \cdot \sum_{\ell_i \in L'} i \bm{e_{\ell_i}}\right). $$
Note that the penalty $M\gg D$ decreases by $2D$ for each subsequent matched pair of nodes.

The main loop (Lines 11-33) of the HM begins with a partial matching $\bm{y_k}=(\bm{x_{k}},\bm{s^+_{k}},\bm{s^-_{k}})$ in $F$. The current network of $0$-cost arcs is updated until an augmenting path $H_{\ell_k t_k}$ of only $0$-cost arcs between $(\ell_k,t_k)$ is found. As $c^\pi_{\ell t}\geq 0$, the reduced cost of this path again is optimal in the current residual network. As 
$$0=\sum_{(i,j) \in H_{\ell_k t_k}} c^{\pi}_{ij} = \sum_{(i,j) \in H_{\ell_k t_k}} (c_{ij} - \pi(i) + \pi(j)) = \left(\sum_{(i,j) \in H_{\ell_k t_k}} c_{ij}\right) - \pi(\ell_k) + \pi(t_k),$$
the path $H_{\ell_k t_k}$ again also is a shortest path between $\ell_k$ and $t_k$ in the current residual network for the original costs, too. The same holds for pairs of nodes connected by augmenting paths in later iterations of the main loop. For a simple notation, let  the main loop in the run form augmenting paths between pairs $(\ell_k,t_k),(\ell_{k+1},t_{k+1}),\dots,(\ell_{|L|},t_{|T|})$. To guarantee that Dantzig's rule chooses a path between these pairs of nodes in each iteration over a possibly shorter path between a different pair (which the more general SSPA could do), update $\bm{c}$ as before by setting
$$(\bm{c_x},\bm{M_n},\bm{M_n}) + 
\left(\bm{0}, -2D \cdot \sum_{t_i \in T\backslash T'} i \bm{e_{t_i}}, -2D \cdot \sum_{\ell_i \in L \backslash L'} i \bm{e_{\ell_i}}\right). $$
Note that $i\geq k+1$ for $t_i \in T\backslash T'$. This allows one to represent the overall update of $\bm{c}$ as 
$$\bm{c} \leftarrow (\bm{c_x},\bm{M_n},\bm{M_n}) + 
\left(\bm{0}, -2D \cdot \sum_{t_i \in T} i \bm{e_{t_i}}, -2D \cdot \sum_{\ell_i \in L} i \bm{e_{\ell_i}}\right). $$
 Finally, unlike for the more general SSPA, note that each $\ell \in L$ and each $t \in T$ appear as start and end of an augmenting path in the run exactly once. After $\ell \in L$ and $t \in T$ are used in this way, they remain in all partial matchings for all later iterations, until termination. This proves the claim.
\end{proof}

The adjustment of $\bm{c}$ in the proof of Theorem \ref{thm:hm_dantzig} to obtain a specific order of augmenting paths is also possible, but not necessary, for the SSPA. It builds on observing a run of the HM first. When using the first objective function in the proof, one replicates a run of the SSPA for an assignment problem; this can, of course, itself already be seen as a replication of a generalized version of the HM, too.

Combining Theorems \ref{thm:hm_edge_walk} and \ref{thm:hm_dantzig}, we know that a Dantzig augmentation scheme produces an edge walk on (\ref{pseudo_poly_form}) and replicates a run of the HM. In particular, recall from Theorem \ref{thm:hm_edge_walk} that in the face $F$ of (\ref{pseudo_poly_form}) where $s^+_{\ell} = 0$ for all $\ell \in L$ and $s^-_t = 0$ for all $t \in T$, all circuit walks are edge walks. Further, the circuits only have entries $\pm 1, 0$ (Corollary \ref{cor:final_circ_char}) and the 
step length in each iteration is $1$ (Lemma \ref{lem:hm_circ_walk}). In fact, the proof of Lemma \ref{lem:hm_circ_walk} is readily generalized to see that any adjacent vertices are at max-norm distance $1$.

This leads to an immediate correspondence of Dantzig's Rule for circuits  (Definition \ref{def:dantzig}) and a largest-coefficient rule for edges over the face $F$. In each iteration, an improving edge is selected; some (entering) nonbasic variable is increased from $0$ to $1$. The associated reduced cost is equivalent to $\bm{c^Tg}$ for the corresponding circuit $\bm{g}$. Thus, the same edge walk can be produced through a primal Simplex run over $F$ using a largest-coefficient rule. We obtain a second, alternative way for replicating a run of the HM.

\begin{corollary}\label{cor:hm_dantzig}
    A primal Simplex run over (\ref{pseudo_poly_form}) using a largest-coefficient rule replicates a run of the Hungarian Method.
\end{corollary}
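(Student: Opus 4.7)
The plan is to deduce this as a direct consequence of Theorems \ref{thm:hm_edge_walk} and \ref{thm:hm_dantzig}, using the same objective function $\bm{c}$ constructed in the proof of Theorem \ref{thm:hm_dantzig}. The strategy is to argue that, on the face $F$ where the HM operates, each step of the Dantzig circuit augmentation scheme \emph{is} a Simplex pivot with the largest-coefficient rule, so replicating the walk via Dantzig's rule automatically yields a primal Simplex run.

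First, I would set up the correspondence between feasible circuits at a vertex $\bm{y_k}$ of $F$ and nonbasic variables in a Simplex tableau for $F$ at the same vertex. By Theorem \ref{thm:hm_edge_walk}, every feasible circuit step at a vertex $\bm{y_k}$ of $F$ moves along an edge to an adjacent vertex $\bm{y_{k+1}}$; by Corollary \ref{cor:final_circ_char} the circuit directions have entries in $\{0,\pm 1\}$; and by the proof of Lemma \ref{lem:hm_circ_walk} the step length is $\delta=1$. Thus each feasible circuit $\bm{g}$ at $\bm{y_k}$ can be identified with a unique nonbasic variable that would enter the basis in a primal Simplex pivot at $\bm{y_k}$, with the entering variable moving from $0$ to $1$. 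Since the update is $\bm{y_{k+1}} = \bm{y_k} + \bm{g}$, the change in the objective is $\bm{c}^T\bm{g}$, which is precisely the reduced cost of the entering nonbasic variable in that Simplex pivot.

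With this identification in hand, the equivalence of the two rules is immediate: Dantzig's rule (Definition \ref{def:dantzig}) selects the feasible circuit $\bm{g}$ maximizing $\bm{c}^T\bm{g}$, and the largest-coefficient rule selects the entering nonbasic variable with the largest reduced cost; under the correspondence just described, these are the same selection. Starting from the same vertex $\bm{y_0}$ described in Lemma \ref{lem:feas_vert}, and using the objective $\bm{c}$ constructed in Theorem \ref{thm:hm_dantzig}, the two algorithms therefore produce the same sequence of vertices. By Theorem \ref{thm:hm_dantzig}, this sequence replicates a run of the Hungarian Method, which establishes the claim.

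The main subtlety I anticipate is handling the restriction to the face $F$ cleanly. The largest-coefficient rule must be run over a Simplex formulation of $F$ (or, equivalently, over (\ref{pseudo_poly_form}) with the penalty-$M$ objective from Theorem \ref{thm:hm_dantzig} that keeps the iterates in $F$), so that the pool of nonbasic variables considered by the Simplex method matches the pool of feasible circuits considered by Dantzig's rule. A second minor issue is degeneracy: if multiple circuits tie for the maximum, both rules may have a free choice, so the statement should be read as ``some largest-coefficient Simplex run replicates the given HM run,'' consistent with how Theorem \ref{thm:hm_dantzig} is phrased. Once these bookkeeping items are addressed, the proof reduces to the one-line observation that $\bm{c}^T\bm{g}$ equals the reduced cost of the entering variable, and the corollary follows.
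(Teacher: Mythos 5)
Your proposal is correct and follows essentially the same route as the paper: the paper likewise combines Theorems \ref{thm:hm_edge_walk} and \ref{thm:hm_dantzig}, notes that all circuits have entries in $\{0,\pm 1\}$ with step length $1$, and identifies each circuit step with a Simplex pivot whose entering nonbasic variable goes from $0$ to $1$ and has reduced cost equal to $\bm{c}^T\bm{g}$, so that Dantzig's rule and the largest-coefficient rule coincide on the face $F$. Your explicit remarks on restricting the Simplex formulation to $F$ and on tie-breaking are bookkeeping points the paper leaves implicit, but they do not change the argument.
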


\section{Conclusion and Outlook}\label{sec:conclusion}
The existence of a close connection between linear programming through Simplex variants or circuit augmentation and combinatorial algorithms for min-cost flow problems is well-known, and our contributions further deepen this understanding. We introduced and studied pseudoflow polyhedra that represent the set of pseudoflows on a network. We identified their circuits and used this information to characterize a number of classical network flows algorithms as combinatorial interpretations of circuit augmentation schemes over the polyhedra. While the Successive Shortest Path Algorithm and the Generic Augmenting Path Algorithm perform general circuit walks, the Hungarian Method performs an edge walk. All of them are replicated through simple pivot rules. 

An interesting feature and strength of our results is that we arrive at {\em purely primal interpretations} of these algorithms - contrasting with classical primal-dual interpretations of the Successive Shortest Path Algorithm and the Hungarian Method in the Simplex setting. At the same time, our approach is also limited in this way -- by its design, we can only arrive at primal interpretations. A natural direction of research is to develop the tools for an interpretation of combinatorial algorithms as primal-dual circuit augmentation; the interplay between primal steps and dual updates in the algorithm in \cite{bv-19b} is a promising starting point. 

We believe that this would allow us to arrive at an interpretation of additional network flows algorithms. One example that we are specifically interested in is the Preflow-Push algorithm described in Section \ref{sec:preflow}: while we are able to show that the algorithm traces a general circuit walk over the pseudoflow polyhedron, we believe that dual information would be required to describe a simple pivot rule that replicates the walk. 

More generally, we claim that the ability to interpret algorithms as primal-dual circuit augmentation would allow us to exhibit similar connections for combinatorial algorithms outside of the setting of network flows, such as non-bipartite matching, scheduling, routing, or packing. 
A first major challenge is a characterization of the set of circuits of the polyhedra underlying these more complicated problems. Even for combinatorial optimization problems that retain a nice graph-theoretical interpretation, such as the Traveling Salesman Problem, a full characterization of the set of circuits becomes challenging \cite{kps-17}. However, many combinatorial algorithms iteratively perform conceptually simple steps, and proving that these steps correspond to circuits would already suffice to validate that a circuit walk is performed. In contrast, for a discussion of an associated pivot rule, a full characterization of the set of circuits -- or equivalent expert knowledge -- may be required.

\subsection{Preflow-Push}\label{sec:preflow}
 
The Generic Preflow-Push Algorithm (PFP) is another algorithm for solving max flow problems. A \textit{preflow} \cite{amo-93} is a function on the arc set that satisfies the capacity constraints as well as the following flow balance relaxation:
\begin{equation*}
    e(i) = \sum_{\{j:(j,i) \in A)\}}x_{ji} - \sum_{\{j:(i,j) \in A)\}} x_{ij} \geq 0 \quad \forall i \in N\setminus \{s,t\}.
\end{equation*}
Preflows are a special case of pseudoflows in that any flow imbalance incurred is an excess ($e(i) \geq 0$). Thus, we restrict to the face of (\ref{pseudo_poly_form}) where $s^-_s = 0$ and $s^+_i = 0$ for all $i \in N \setminus\{s\}$. See Algorithm \ref{alg:PFP} for a description in pseudocode \cite{amo-93}. To aid in its discussion, let us define some key terms of the PFP. An \textit{active node} is a node $i\in N \setminus\{s,t\}$ with strictly positive excess, $e(i)>0$. For a chosen active node $i$, an \textit{admissible arc} is an arc $(i,j)$ such that $d(i) = d(j)+1$, where $d(i)$ and $d(j)$ are the distance labels of node $i$ and $j$, respectively. Initially, these labels refer to the distance of a node from $t$; later these are lower bounds on the distance. When flow is sent along an admissible arc, we say the arc is \textit{saturated} if the flow, $\delta$, along the arc is its residual capacity, $r_{ij}$, and \textit{nonsaturating} otherwise.

The pre-processing stage of the PFP (Lines 3-6) begins with zero flow along all arcs (Line 3). Then the initial distance labels for nodes $i \in N\setminus\{s,t\}$ are computed (Line 4) and all arcs $(s,i)$ leaving the source node $s$ are saturated (Line 5); the corresponding nodes $i$ become active. Finally, the source $s$ receives a large distance label, distinguishing it from the exact distance labels for the other nodes (Line 6). 

Then the algorithm enters the main loop (Lines 7-15). Each iteration begins by selecting an active node $i$ (Line 8). If there exists an admissible arc for the node (Line 9), flow is sent based on the smaller of the amount of excess at the active node or the capacity of the admissible arc (Line 10), and the residual network is updated (Line 11). If no admissible arc exists, the algorithm notices that the current distance label $d(i)$ is an underestimation of the shortest distance to $t$ and increases it to the smallest value that may be the correct distance based on the labels for its neighbors (Line 13). Thus, in each iteration either flow is `pushed' towards a node closer to the target $t$ or a distance label is increased. If the label for a node becomes larger than the label of node $s$, the remaining excess cannot be sent to $t$ anymore; subsequent iterations push it back towards $s$. The algorithm terminates when no active nodes remain in the network. 

\begin{algorithm}
       \caption{Generic Preflow-Push Algorithm}\label{alg:PFP}
\begin{algorithmic}[1] 
       \STATE \mbox{\textbf{Input:} a network $G(N,A)$ with non-negative capacities $\bm{u}$}
       \STATE \mbox{\textbf{Begin:}}
       \STATE Set arcs $\bm{x} = \bm{0}$
       \STATE Compute the exact distance label $d(i)$ $\forall i \in N\setminus \{s,t\}$
       \STATE Set $x_{si} = u_{si}$ for each arc $(s,i)$ where $s$ is the source node
       \STATE Set $d(s) = n$

       \WHILE{the network contains an active node}
            \STATE Select an active node $i$
            \IF{the network contains an admissible arc $(i,j)$}
                \STATE Push $\delta = \min\{e(i), r_{ij}\}$ units of flow from node $i$ to node $j$
                \STATE Update $G(N,A)$
            \ELSE
                \STATE {Replace $d(i)$ by $\min\{d(j)+1:(i,j) \in A(i) \text{ and } r_{ij} > 0\}$}
            \ENDIF
       \ENDWHILE
       \STATE \RETURN{} max flow $\bm{x}$
\end{algorithmic}
\end{algorithm}

As with some of the algorithms in Section \ref{sec:alg_of_int}, we first show that the PFP performs a circuit walk on a face of the pseudoflow polyhedron (Lemma \ref{lem:pfp_circ_walk}). We then provide an example in which a run of the PFP performs a general (non-edge) walk (Figure \ref{fig:pfp_gen_walk}). However, unlike for the algorithms in Section \ref{sec:alg_of_int}, it will remain open what pivot rule would allow a replication of the run. 

\begin{lemma}\label{lem:pfp_circ_walk}
    The Preflow-Push Algorithm performs a circuit walk over the face $F$ of (\ref{pseudo_poly_form}) where $s^-_s = 0$ and $s^+_i = 0$ for all $i \in N \setminus \{s\}$. 
\end{lemma}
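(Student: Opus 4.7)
The plan is to follow the same template used in Lemma \ref{lem:sspa_circ_walk}, Lemma \ref{lem:gapa_circ_walk}, and Lemma \ref{lem:hm_circ_walk}. Since $\bm{b} = \bm{0_n}$ in a max flow instance and the algorithm begins with $\bm{x} = \bm{0_m}$ (Line 3), Lemma \ref{lem:feas_vert} identifies the starting point as the vertex $\bm{y_0} = \bm{0_{m+2n}}$, which trivially lies in $F$. The distance-label computations in Lines 4, 6, and 13 leave the pseudoflow unchanged and are therefore not steps of the walk; only the flow-changing operations in Line 5 (pre-processing saturations) and Line 10 (main-loop pushes) generate steps.

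I would then identify each flow-changing operation as a single-arc path circuit from Corollary \ref{cor:final_circ_char}. The saturation of $(s,i)$ in Line 5 raises $x_{si}$ by $u_{si}$ while increasing both $s^+_s$ and $s^-_i$, which is precisely $+(\bm{H_{si}}, \bm{e_s}, \bm{e_i})$ with $\bm{H_{si}}$ the single-arc path. For main-loop pushes, I would handle two cases: (i) a push between nodes $i, j \in N \setminus \{s\}$, either forward along an original arc $(i,j)$ or in reverse along the residual of $(j,i)$, corresponds to a circuit $(\bm{H_{ij}}, \bm{0_n}, -\bm{e_i} + \bm{e_j})$, where the single-arc path $\bm{H_{ij}}$ carries coefficient $+1$ on $(i,j)$ or $-1$ on $(j,i)$ respectively; and (ii) a push from an active node $i$ to the source $s$, along either an original arc $(i,s)$ or the residual of $(s,i)$, corresponds to $(\bm{H_{is}}, -\bm{e_s}, -\bm{e_i})$, which decreases $s^+_s$ and $s^-_i$. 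In every case, only $\bm{x}$, the slack $s^+_s$, and the slacks $s^-_k$ for $k \in N \setminus \{s\}$ are altered, so the walk never leaves $F$.

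Verifying that each step length $\delta = \min\{e(i), r_{ij}\}$ is maximal is straightforward: if $\delta = r_{ij}$ the relevant $x$-arc hits its upper capacity or its lower bound $0$; if $\delta = e(i) = s^-_i$, then $s^-_i$ hits $0$. A subtle point I want to confirm is that $s^+_s \geq 0$ is never binding before $\delta$ in a push back to the source. This follows from the identity $s^+_s = s^-_t + \sum_{k \in N \setminus \{s,t\}} s^-_k$, forced by the equality constraints of (\ref{pseudo_poly_form}) together with the face conditions of $F$: at any active node $i$ one has $s^+_s \geq s^-_i \geq \delta$, so the non-negativity of $s^+_s$ cannot tighten the step length.

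The main obstacle I expect is the clean bookkeeping of the push cases — forward versus reverse in the residual network, and pushes involving the specially treated source $s$ — along with the observation that pushes incident to $s$ produce circuits of the fourth type in Corollary \ref{cor:final_circ_char} rather than the third. Once this case analysis is organized, the argument reduces to a direct adaptation of the earlier lemmas and Definition \ref{def:circuitwalk}.
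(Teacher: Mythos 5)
Your proposal is correct and follows essentially the same route as the paper's proof: start at the zero vertex, interpret the pre-processing saturations as one-arc path circuits $(\bm{e_{si}},\bm{e_s},\bm{e_i})$, and interpret each main-loop push as a single-arc path circuit of the third type of Corollary \ref{cor:final_circ_char} (between non-source nodes) or the fourth type (back to $s$), with maximality of the step forced by the residual capacity or by $s^-_i=e(i)$ hitting zero. Your explicit verification that $s^+_s\geq 0$ never binds (via $s^+_s=\sum_{k\neq s}s^-_k$) is a detail the paper leaves implicit, and the sign you give for the $s^+_s$-component of the push-to-source circuit (decreasing) is the correct one.
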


\begin{proof}
We begin at the vertex $\bm{y_0} = \bm{0_{m+2n}}$ and restrict to the face  $F$ of (\ref{pseudo_poly_form}) where $s^-_s = 0$ and $s^+_i = 0$ for all $i \in N \setminus \{s\}$. During the pre-processing stage, the $x$-arcs which emanate from $s$ are saturated. Since each arc can be saturated one at a time and be interpreted as a one-arc path, the pre-processing stage corresponds to a sequence of path circuits of the form $\bm{g} = (\bm{e_{si}},\bm{e_s},\bm{e_i})$ (Corollary \ref{cor:final_circ_char}). Among the path circuits corresponding to the one-arc path, only $\bm{g}$ is feasible at $\bm{y_0}$ (all others would have to decrease a slack variable below $0$). The circuit step $\bm{y_1} = \bm{y_0}+\delta \bm{g} \in F$ corresponding to the saturation of $(s,i)$ with capacity $u_{si}$ is of maximal length $\delta = u_{si}$ in (\ref{pseudo_poly_form}). These arguments hold for subsequent circuit steps after $\bm{y_1}$. Thus, the pre-processing stage corresponds to a circuit walk in $F$.  

The pre-processing concludes at the point $\bm{y_k} = (\bm{u^k_{si}}, \bm{u^k_s}, \bm{u^k_i})$, where $\bm{u^k_{si}}=\sum_{\{i:(s,i) \in A)\}} u_{si}\cdot \bm{e_{si}}$, $\bm{u^k_s} = \bm{e_{s}} \cdot \sum_{\{i:(s,i) \in A)\}} u_{si}$, and $\bm{u^k_i}= \sum_{\{i:(s,i) \in A)\}} u_{si}\cdot \bm{e_{i}}$ denote the corresponding sums of scaled unit vectors. Note that $\bm{u^k_s}$ remains a scaled unit vector itself.  

The main loop begins at $\bm{y_k}$. In each iteration, flow can only be changed along a single admissible arc (Lines 10-11). This change again corresponds to one of the path circuits from Corollary \ref{cor:final_circ_char}. Only two types of path circuits can appear. For arcs $(i,j)$ with $i,j\neq s$, circuits of the form $\bm{g} = (\bm{e_{ij}},\bm{0_m},\bm{e_j}-\bm{e_i})$ (or $\bm{g} = (-\bm{e_{ij}},\bm{0_m},\bm{e_i}-\bm{e_j})$ when sending flow along opposite arc $(j,i)$ in the residual network) are the only ones to satisfy $\bm{y_{k+1}} = \bm{y_k}+\delta \bm{g} \in F$. The step length $\delta$ in the PFP (Line 10) corresponds to $x_{ij}$ becoming equal to the upper capacity (or zero) or to $s^-_i$ (or $s^-_j$) becoming zero, so the step is of maximal length over (\ref{pseudo_poly_form}). 

For arcs $(i,s)$ in the residual network (which come from the pre-processing and saturation of original arcs $(s,i)$), circuits of the form $\bm{g} = (-\bm{e_{si}}, \bm{e_s},-\bm{e_i})$ are the only ones to satisfy $\bm{y_{k+1}} = \bm{y_k}+\delta \bm{g} \in F$. The step length $\delta$ in the PFP (Line 10) corresponds to the opposite flow $x_{si}$ becoming equal to zero or to $s^-_i$ becoming zero, so the step is of maximal length over (\ref{pseudo_poly_form}).

The arguments for the circuit step from $\bm{y_{k}}$ and $\bm{y_{k+1}}$ also hold for later iterations, which proves the claim. 
\end{proof}

We now provide an example for which the PFP performs a general (non-edge) circuit walk.
Consider the networks in Figure \ref{fig:pfp_gen_walk}; the solid arcs $(i,j)$ are labeled with capacity values $r_{ij}$ while the dotted slack arcs $s_i^-$ for each node $i$ are labeled with their current flow value. Figure \ref{fig:pre-processing} depicts the residual network at the beginning of the main loop for the PFP. At this point, the only active nodes are 1, 2, and 3. The Generic PFP does not specify an order in which these active nodes must be picked, and it is easy to specify a rule that would lead to the following run: one could choose node 3 as the first active node, and send flow $x_{34} = 3$ which corresponds to a circuit step of the form $\bm{g} = 3(\bm{e_{34}}, \bm{0_m}, \bm{e_4}-\bm{e_3})$. In the next two iterations, flow could be sent along arcs $(2,4)$ and $(1,2)$, and we arrive at the residual network depicted in Figure \ref{fig:3_steps}. Note that the associated $\bm{y}=(\bm{x},\bm{s^+},\bm{s^-})$ is not a vertex of (\ref{pseudo_poly_form}). To this end, recall that a vertex must be described through an inclusion-maximal set of active constraints; however, a flow of $1$ could be sent along the path circuit for $(2,4)$ and would create a strict superset of active constraints through $x_{24}= u_{24}$. Thus, the PFP performs a general walk on a face of (\ref{pseudo_poly_form}).

\begin{figure}[H]
     \centering
     \begin{subfigure}[b]{0.495\textwidth}
         \centering
         \subcaptionbox{Residual network after pre-processing stage \label{fig:pre-processing}}{
\begin{tikzpicture}[scale = 0.8]
        \tikzset{vertex/.style = {shape=circle,draw,minimum size=2em}}
        \tikzset{edge/.style = {-triangle 90,fill=black}}
        \tikzset{edgedd/.style = {dashed,-triangle 90,fill=black}}
        \node[vertex] (s) at (-2,0) {$s$};
        \node[vertex] (1) at (-0.75,2) {1};
        \node[vertex] (2) at (1,2) {2};
        \node[vertex] (3) at (-0.25,-1) {3};
        \node[vertex] (4) at (1.5,0) {4};
        \node[vertex] (t) at (3.25,0) {$t$};
        
        \draw[-latex,thick] (1)--(s) node[midway, xshift=0mm, yshift = 0mm, fill = white] {$3$};
        \draw[-latex,thick] (2)--(s) node[midway, xshift=0mm, yshift = 0mm, fill = white] {$3$};
        \draw[-latex,thick] (3)--(s) node[midway, xshift=0mm, yshift = 0mm, fill = white]{$3$};
        \draw[-latex,thick] (1)--(2) node[midway, xshift=-1mm, yshift = 0mm, fill = white] {$2$};
        \draw[-latex,thick] (2)--(4) node[midway, xshift=0mm, yshift = 0mm, fill = white] {$4$};
        \draw[-latex,thick] (3)--(4) node[midway, xshift=-1mm, yshift = 0mm, fill = white] {$4$};
        \draw[-latex,thick] (4)--(t) node[midway, xshift=0mm, yshift = 0mm, fill = white] {$9$};

        \draw[-latex,thick,dotted] (-2,2)--(s) node[midway, xshift=0mm, yshift = 0mm, fill = white] {$9$};
        \draw[-latex,thick,dotted] (1)--(-0.75,3.5) node[midway, xshift=0mm, yshift = -1mm, fill = white] {$3$};
        \draw[-latex,thick,dotted] (2)--(1,3.5) node[midway, xshift=0mm, yshift = -1mm, fill = white] {$3$};
        \draw[-latex,thick,dotted] (3)--(-0.25,-2.5) node[midway, xshift=0mm, yshift = 1mm, fill = white] {$3$};
        \draw[-latex,thick,dotted] (4)--(3,-2) node[midway, xshift=0mm, yshift = 0mm, fill = white] {$0$};
        \draw[-latex,thick,dotted] (t)--(3.25,2) node[midway, xshift=0mm, yshift = 0mm, fill = white] {$0$};

    \end{tikzpicture}}
     \end{subfigure}
     \begin{subfigure}[b]{0.495\textwidth}
         \centering
         \subcaptionbox{Residual network after three iterations \label{fig:3_steps}}{
 \begin{tikzpicture}[scale = 0.8]
        \tikzset{vertex/.style = {shape=circle,draw,minimum size=2em}}
        \tikzset{edge/.style = {-triangle 90,fill=black}}
        \tikzset{edgedd/.style = {dashed,-triangle 90,fill=black}}
        \node[vertex] (s) at (-2,0) {$s$};
        \node[vertex] (1) at (-0.75,2) {1};
        \node[vertex] (2) at (1,2) {2};
        \node[vertex] (3) at (-0.25,-1) {3};
        \node[vertex] (4) at (1.5,0) {4};
        \node[vertex] (t) at (3.25,0) {$t$};
        
        \draw[-latex,thick] (1)--(s) node[midway, xshift=0mm, yshift = 0mm, fill = white] {$3$};
        \draw[-latex,thick] (2)--(s) node[midway, xshift=0mm, yshift = 0mm, fill = white] {$3$};
        \draw[-latex,thick] (3)--(s) node[midway, xshift=0mm, yshift = 0mm, fill = white]{$3$};
        \draw[-latex,thick] (2)--(1) node[midway, xshift=1mm, yshift = 0mm, fill = white] {$2$};
        \draw[-latex,thick] (2)--(4) node[midway, xshift=0mm, yshift = 0mm, fill = white] {$1$};
        \draw[-latex,thick] (4) to [bend right = 25] node[midway, xshift=0mm, yshift = 0mm, fill = white] {$3$} (2) ;
        \draw[-latex,thick] (3)--(4) node[midway, xshift=-1mm, yshift = 0mm, fill = white] {$1$};
        \draw[-latex,thick] (4) to [bend right = 25] node[midway, xshift=0mm, yshift = 0mm, fill = white] {$3$} (3) ;
        \draw[-latex,thick] (4)--(t) node[midway, xshift=0mm, yshift = 0mm, fill = white] {$9$};

        \draw[-latex,thick,dotted] (-2,2)--(s) node[midway, xshift=0mm, yshift = 0mm, fill = white] {$9$};
        \draw[-latex,thick,dotted] (1)--(-0.75,3.5) node[midway, xshift=0mm, yshift = -1mm, fill = white] {$1$};
        \draw[-latex,thick,dotted] (2)--(1,3.5) node[midway, xshift=0mm, yshift = -1mm, fill = white] {$2$};
        \draw[-latex,thick,dotted] (3)--(-0.25,-2.5) node[midway, xshift=0mm, yshift = 1mm, fill = white] {$0$};
        \draw[-latex,thick,dotted] (4)--(3,-2) node[midway, xshift=0mm, yshift = 0mm, fill = white] {$6$};
        \draw[-latex,thick,dotted] (t)--(3.25,2) node[midway, xshift=0mm, yshift = 0mm, fill = white] {$0$};
    \end{tikzpicture}}
     \end{subfigure}
        \caption{An example for a run of the PFP where a general walk is performed}
        \label{fig:pfp_gen_walk}
\end{figure}
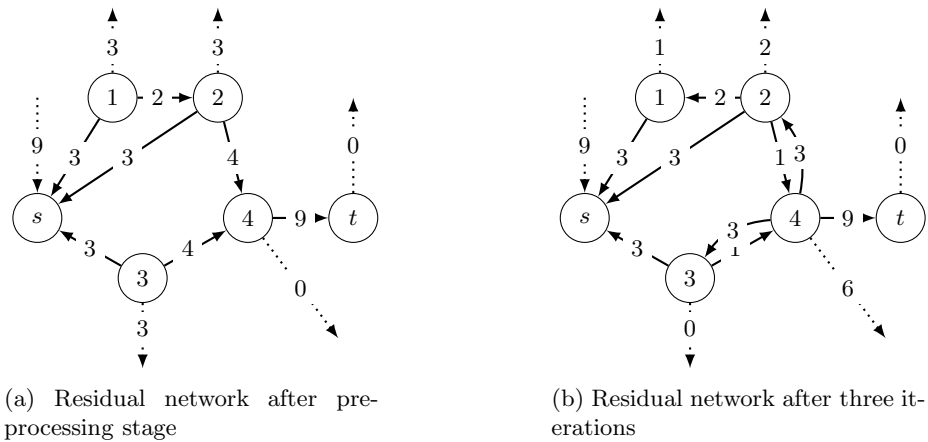

We are interested in a pivot rule which can replicate this walk, but this remains an open question. For the algorithms in Section \ref{sec:alg_of_int} that use dual information, we were able to translate this information to a primal setting through the relaxation from flows to pseudoflows and a primal interpretation in the pseudoflow network. For the PFP, these same techniques do not carry over. We see the development of tools for an interpretation of combinatorial algorithms as primal-dual circuit augmentation for certain pivot rules as the next natural step in this direction of research. This may not only allow a replication of the PFP, but also of other network flows algorithms such as the out-of-kilter algorithm, and algorithms for more general classes of problems.


\bibliographystyle{plain}

\begin{thebibliography}{10}

\bibitem{amo-93}
R.~K. Ahuja, T.~L. Magnanti, and J.~B. Orlin.
\newblock {\em Network flows: Theory, algorithms, and applications}.
\newblock Prentice Hall Inc., Englewood Cliffs, NJ, 1993.

\bibitem{bfh-14}
S.~Borgwardt, E.~Finhold, and R.~Hemmecke.
\newblock On the circuit diameter of dual transportation polyhedra.
\newblock {\em S{I}{A}{M} Journal on Discrete Mathematics}, 29(1):113--121,
  2016.

\bibitem{bdf-16}
S.~Borgwardt, J.~A.~De Loera, and E.~Finhold.
\newblock Edges vs circuits: a hierarchy of diameters in polyhedra.
\newblock {\em Advances in Geometry}, 16(4):511--530, 2016.

\bibitem{bv-19b}
S.~Borgwardt and C.~Viss.
\newblock An implementation of steepest-descent augmentation for linear
  programs.
\newblock {\em Operations Research Letters}, 48(3):323--328, 2020.

\bibitem{bv-17}
S.~Borgwardt and C.~Viss.
\newblock Circuit {W}alks in {I}ntegral {P}olyhedra.
\newblock {\em Discrete Optimization}, 44(1):100566, 2022.

\bibitem{d-51}
G.~B. Dantzig.
\newblock Application of the simplex method to a transportation problem.
\newblock {\em Activity Analysis and Production and Allocation}, pages
  359--373, 1951.

\bibitem{dhl-15}
J.~A. De~Loera, R.~Hemmecke, and J.~Lee.
\newblock On augmentation algorithms for linear and integer-linear programming:
  from {E}dmonds-{K}arp to {B}land and beyond.
\newblock {\em SIAM Journal on Optimization}, 25(4):2494--2511, 2015.

\bibitem{env-22}
F.~Ekbatani, B.~Natura, and L.~V{\'e}gh.
\newblock {\em Circuit Imbalance Measures and Linear Programming}, pages
  64--114.
\newblock London Mathematical Society Lecture Note Series. Cambridge University
  Press, 2022.

\bibitem{e-82}
M.~Engquist.
\newblock A successive shortest path algorithm for the assignment problem.
\newblock {\em INFOR: Information Systems and Operational Research},
  20(4):370--384, 1982.

\bibitem{ff-57}
L.~R. Ford~Jr. and D.~R. Fulkerson.
\newblock A primal-dual algorithm for the capacitated {H}itchcock problem.
\newblock {\em Naval Research Logistics Quarterly}, 4(1):47--54, 1957.

\bibitem{gdl-14}
J.~B. Gauthier, J.~Desrosiers, and M.~L\"ubbecke.
\newblock Decomposition theorems for linear programs.
\newblock {\em Operations Research Letters}, 42(8):553--557, 2014.

\bibitem{gw-96}
M.~X. Goemans and D.~P. Williamson.
\newblock {\em The Primal-Dual Method for Approximation Algorithms and Its
  Application to Network Design Problems}, pages 144–--191.
\newblock PWS Publishing Co., USA, 1996.

\bibitem{g-75}
J.~E. Graver.
\newblock On the foundation of linear and integer programming {I}.
\newblock {\em Mathematical Programming}, 9:207--226, 1975.

\bibitem{kps-17}
S.~Kafer, K.~Pashkovich, and L.~{Sanit{\`a}}.
\newblock On the circuit diameter of some combinatorial polytopes.
\newblock {\em SIAM Journal on Discrete Mathematics}, 33(1):1--25, 2017.

\bibitem{h-55}
H.~W. Kuhn.
\newblock The {H}ungarian {M}ethod for the {A}ssignment problem.
\newblock {\em Naval Research Logistics Quarterly}, 2:83--97, 1995.

\bibitem{m-57}
J.~Munkres.
\newblock Algorithms for the assignment and transportation problems.
\newblock {\em Journal of the Society for Industrial and Applied Mathematics},
  5(1):32--38, 1957.

\bibitem{m-06}
K.~Murty.
\newblock {\em Primal-Dual and Dual Algorithms for Assignment, Transportation
  Problems}, chapter~3, pages 227--321.
\newblock Prentice Hall, 1992.

\bibitem{o-93}
S.~Onn.
\newblock Geometry, complexity, and combinatorics of permutation polytopes.
\newblock {\em Journal on Combinatorial Theory, Ser. A}, 64(1):31--49, 1993.

\bibitem{o-10}
S.~Onn.
\newblock {\em Nonlinear Discrete Optimization}.
\newblock Zurich Lectures in Advanced Mathematics. European Mathematical
  Society, 2010.

\bibitem{o-97}
J.~B. Orlin.
\newblock A polynomial time primal network simplex algorithm for minimum cost
  flows.
\newblock {\em Mathematical Programming, Ser. B}, 78(2):109--129, 1997.

\bibitem{o-06}
J.G. Oxley.
\newblock {\em Matroid Theory (Oxford Graduate Texts in Mathematics)}.
\newblock Oxford University Press, 2006.

\bibitem{r-69}
R.~T. Rockafellar.
\newblock The elementary vectors of a subspace of {$\mathbb{R}^{N}$}.
\newblock In {\em Combinatorial {M}athematics and its {A}pplications}, pages
  104--127. University of North Carolina Press, 1969.

\bibitem{v-14}
R.~J. Vanderbei.
\newblock {\em Linear Programming Foundations and Extensions}.
\newblock Springer International Publishing, 2014.

\end{thebibliography}

\end{document}